\begin{document}
	
	\newcommand{\N}{\mathbf{N}}
	\newcommand{\n}{\mathbf{n}}
	\newcommand{\x}{\mathbf{x}}
	\newcommand{\h}{\mathbf{h}}
	\newcommand{\m}{\mathbf{m}}
	
	\newcommand{\B}{\mathbf{B}}
	\newcommand{\U}{\mathbf{U}}
	\newcommand{\V}{\mathbf{V}}
	\newcommand{\T}{\mathbf{T}}
	\newcommand{\G}{\mathbf{G}}
	\newcommand{\Para}{\mathbf{P}}
	\newcommand{\Levi}{\mathbf{L}}
	\newcommand{\Y}{\mathbf{Y}}
	\newcommand{\X}{\mathbf{X}}
	\newcommand{\M}{\mathbf{M}}
	\newcommand{\pro}{\mathbf{prod}}
	\renewcommand{\o}{\overline}
	
	\newcommand{\Gtilde}{\mathbf{\tilde{G}}}
	\newcommand{\Ttilde}{\mathbf{\tilde{T}}}
	\newcommand{\Btilde}{\mathbf{\tilde{B}}}
	\newcommand{\Ltilde}{\mathbf{\tilde{L}}}
	\newcommand{\C}{\operatorname{C}}
	
	\newcommand{\bl}{\operatorname{bl}}
	\newcommand{\Z}{\operatorname{Z}}
	\newcommand{\Gal}{\operatorname{Gal}}
	\newcommand{\kernel}{\operatorname{ker}}
	\newcommand{\Irr}{\operatorname{Irr}}
	\newcommand{\D}{\operatorname{D}}
	\newcommand{\I}{\operatorname{I}}
	\newcommand{\GL}{\operatorname{GL}}
	\newcommand{\SL}{\operatorname{SL}}
	\newcommand{\W}{\operatorname{W}}
	\newcommand{\R}{\operatorname{R}}
	\newcommand{\Br}{\operatorname{Br}}
	\newcommand{\Aut}{\operatorname{Aut}}
	\newcommand{\End}{\operatorname{End}}
	\newcommand{\Ind}{\operatorname{Ind}}
	\newcommand{\Res}{\operatorname{Res}}
	\newcommand{\br}{\operatorname{br}}
	\newcommand{\Hom}{\operatorname{Hom}}
	\newcommand{\Endo}{\operatorname{End}}
	\newcommand{\Ho}{\operatorname{H}}
	\newcommand{\Tr}{\operatorname{Tr}}
	\newcommand{\opp}{\operatorname{opp}}
	\theoremstyle{remark}

	\theoremstyle{definition}
	\newtheorem{definition}{Definition}[section]
	\newtheorem{notation}[definition]{Notation}
	\newtheorem{construction}[definition]{Construction}
	\newtheorem{remark}[definition]{Remark}
	\newtheorem{example}[definition]{Example}

	\theoremstyle{plain}
	
	\newtheorem{theorem}[definition]{Theorem}
	\newtheorem{lemma}[definition]{Lemma}
	\newtheorem{question}{Question}
	\newtheorem{corollary}[definition]{Corollary}
	\newtheorem{proposition}[definition]{Proposition}
	\newtheorem{conjecture}[definition]{Conjecture}
	\newtheorem{assumption}[definition]{Assumption}
	\newtheorem{hypothesis}[definition]{Hypothesis}
	\newtheorem{maintheorem}[definition]{Main Theorem}
	
	\newtheorem*{hypo*}{Hypothesis 3.3'}

	\newtheorem*{theo*}{Theorem}
	\newtheorem*{conj*}{Conjecture}
	\newtheorem*{cor*}{Corollary}
	
	\newtheorem{theo}{Theorem}
	\newtheorem{conj}[theo]{Conjecture}
	\newtheorem{cor}[theo]{Corollary}

	\renewcommand{\thetheo}{\Alph{theo}}
	\renewcommand{\theconj}{\Alph{conj}}
	\renewcommand{\thecor}{\Alph{cor}}

	\title{Jordan Decomposition for the Alperin--McKay Conjecture}
	
	\date{\today}
	\author{Lucas Ruhstorfer}
	\address{Fachbereich Mathematik, TU Kaiserslautern, 67653 Kaiserslautern, Germany}
	\email{ruhstorfer@mathematik.uni-kl.de}
	\keywords{Alperin-McKay conjecture, Jordan decomposition for groups of Lie type}

	\subjclass[2010]{20C33}
	
	\begin{abstract}
		Späth showed that the Alperin--McKay conjecture in the representation theory of finite groups holds if the so-called inductive Alperin--McKay condition holds for all finite simple groups. In a previous article, we showed that the Bonnafé--Rouquier equivalence for blocks of finite groups of Lie type can be lifted to include automorphisms of groups of Lie type. We use our results to reduce the verification of the inductive condition for groups of Lie type to quasi-isolated blocks.
	\end{abstract}

	\maketitle
	
	\section*{Introduction}
	
	\subsection*{The Global-Local Conjectures}
	
	In the representation theory of finite groups one of the most important conjectures is the so-called Alperin--McKay conjecture. Let $G$ be a finite group and $\ell$ a prime dividing the order of $G$. For an $\ell$-block $b$ of $G$ we denote by $\Irr_0(G,b)$ the set of height zero characters of $b$. Then the Alperin--McKay conjecture is as follows.

	\begin{conj*}[Alperin--McKay]
		Let $b$ be an $\ell$-block of $G$ with defect group $D$ and $B$ its Brauer correspondent in $\mathrm{N}_G(D)$. Then $$|\Irr_0(G,b)|= |\Irr_0(\mathrm{N}_G(D),B)|.$$
	\end{conj*}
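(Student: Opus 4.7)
The plan is to prove the Alperin--McKay conjecture by reducing it to a statement about non-abelian finite simple groups and then verifying that statement family by family. First I would invoke Sp\"ath's reduction theorem, which asserts that the conjecture holds for all finite groups $G$ provided each non-abelian finite simple group $S$ satisfies the inductive Alperin--McKay (iAM) condition for every prime $\ell$ dividing $|S|$. The iAM condition strengthens the Alperin--McKay equality by demanding a bijection $\Irr_0(S_{\mathrm{sc}},b)\to\Irr_0(\mathrm{N}_{S_{\mathrm{sc}}}(D),B)$ on the universal covering group that is $\Aut(S)_b$-equivariant and that lifts compatibly to projective character extensions across the relevant inertia groups. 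Once the iAM condition is known in every case, a Clifford-theoretic gluing argument of Sp\"ath produces the required bijection for an arbitrary finite group $G$.

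Second, I would treat the families of simple groups in turn. The sporadic groups, the Tits group, and the alternating groups are handled by known character-theoretic work, while groups of Lie type in defining characteristic reduce via Brauer's first main theorem to cases that have been verified directly. The substantive remaining case is a finite simple group of Lie type $\G^F$ in non-defining characteristic $\ell$, and here I would apply the main result of the present paper: the Bonnaf\'e--Rouquier Morita equivalence relating a non-quasi-isolated $\ell$-block of $\G^F$ to a block of a proper Levi subgroup can be promoted to a Morita equivalence that is equivariant for the stabiliser of the block in $\Aut(\G^F)$. Since the iAM condition transports across such equivariant Morita equivalences, induction on the semisimple rank reduces matters to the case of quasi-isolated blocks.

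The main obstacle is then to verify the iAM condition for quasi-isolated $\ell$-blocks of simple groups of Lie type. I would combine the Kessar--Malle classification of these blocks with the parametrisation of height-zero characters by $e$-cuspidal pairs after Brou\'e--Malle--Michel, the description of the action of diagonal, field, and graph automorphisms on Lusztig series due to Cabanes--Sp\"ath, and the known cases of Brou\'e's abelian defect group conjecture. The genuinely hard step will be constructing stable extensions of the resulting bijection to the inertia groups inside $\Aut(\G^F)_b$ for quasi-isolated blocks at bad primes and for the exceptional types $E_7$ and $E_8$, where neither the local block structure nor the extendibility behaviour of cuspidal characters is presently under complete control. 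This gap is exactly what keeps the Alperin--McKay conjecture open in general, and any complete proof along the present lines must close it.
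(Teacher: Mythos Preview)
The statement you are addressing is presented in the paper as a \emph{Conjecture}, not as a theorem; the paper does not prove it and does not claim to. There is therefore no ``paper's own proof'' against which to compare your proposal. What the paper actually establishes is the reduction step you describe in your second paragraph: under Hypothesis~3.3' (the iAM condition for strictly quasi-isolated blocks), every $\ell$-block of a simply connected group of Lie type in non-defining characteristic is AM-good. This is Theorem~\ref{maintheorem}, and it is exactly one ingredient in the strategy you outline, not the whole conjecture.

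Your outline of the global strategy is accurate, and you are honest about the obstruction: the verification of the iAM condition for quasi-isolated blocks, especially at bad primes and for exceptional types, is not complete. But that means your text is not a proof proposal in any meaningful sense; it is a survey of the programme together with an acknowledgement that the programme is unfinished. You even say so explicitly in your last sentence. A proof proposal must at minimum indicate how the remaining gap would be closed, and you do not (because, at present, nobody can). So the correct response here is simply that the Alperin--McKay conjecture remains open, the paper contributes the Jordan-decomposition reduction to quasi-isolated blocks, and your sketch correctly situates that contribution within the broader Sp\"ath reduction framework without itself supplying anything new toward the missing quasi-isolated verification.
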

	
	This conjecture has been reduced by Späth \cite[Theorem C]{IAM} to the verification of certain stronger versions of the same conjecture for finite quasi-simple groups. These stronger versions are usually referred to as the inductive conditions. According to the classification of finite simple groups, many finite simple groups are groups of Lie type. Therefore, understanding their representation theory is key to proving the inductive conditions for simple groups.
	
	\subsection*{Representation theory of groups of Lie type}

In the representation theory of groups of Lie type the Morita equivalences by Bonnafé--Rouquier have proved to be an indispensable tool. Let $\G$ be a connected reductive group defined over an algebraic closure of $\mathbb{F}_p$ with Frobenius endomorphism $F: \G \to \G$. Suppose that $(\G^\ast,F^\ast)$ is a group in duality with $(\G,F)$ and fix a semisimple element $ s\in (\G^\ast)^{F^\ast}$ of $\ell'$-order. We assume that $\ell$ is a prime with $\ell \neq p$ and we fix an $\ell$-modular system as in \ref{iAM} below and denote by $e_s^{\G^F} \in \mathrm{Z}(\mathcal{O} \G^F)$ the central idempotent associated to the $(\G^\ast)^{F^\ast}$-conjugacy class of $s$ as in Brou\'e--Michel \cite{BrMi}. Suppose that $\Levi^\ast$ is the minimal $F^\ast$-stable Levi subgroup of $\G^\ast$ containing $\mathrm{C}_{(\G^\ast)^{F^\ast}}(s)\mathrm{C}_{\G^\ast}^\circ(s)$. Let $\Para$ be a parabolic subgroup of $\G$ with Levi decomposition $\Para= \Levi \ltimes \U$ and let $\Y_\U^\G$ the Deligne--Lusztig variety associated to it.
	Then the following was proved in \cite{BoRo}.

	\begin{theo}[Bonnaf\'e--Rouquier]\label{BDRintro}
With the notation as above, the Deligne--Lusztig bimodule $H_c^{\operatorname{dim}(\Y_{\U})}(\Y_\U^\G,\mathcal{O}) e_s^{\Levi^F}$ induces a Morita equivalence between $\mathcal{O} \G^F e_s^{\G^F}$ and  $\mathcal{O} \Levi^F e_s^{\Levi^F}$.
	\end{theo}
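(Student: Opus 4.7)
The plan is to apply a general criterion of Brou\'e and Rickard: a complex $C$ of $(A,B)$-bimodules induces a Morita equivalence between the categories of left $A$-modules and left $B$-modules as soon as it is concentrated in a single cohomological degree, the bimodule in that degree is projective on both sides, and the natural evaluation maps $C \otimes_B^{\mathbb{L}} C^\vee \to A$ and $C^\vee \otimes_A^{\mathbb{L}} C \to B$ are quasi-isomorphisms. The task is therefore to verify these three properties for the Deligne--Lusztig complex $R\Gamma_c(\Y_\U^\G, \mathcal{O}) e_s^{\Levi^F}$, viewed as a complex of bimodules over $\mathcal{O}\G^F e_s^{\G^F}$ on the left and $\mathcal{O}\Levi^F e_s^{\Levi^F}$ on the right.

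The first and most important step is the concentration/disjointness statement $\Ho_c^i(\Y_\U^\G, \mathcal{O}) e_s^{\Levi^F} = 0$ for $i \neq \dim \Y_\U$. The key input is the Mackey formula for Deligne--Lusztig induction applied to ${}^*\R_\Levi^\G \circ \R_\Levi^\G$: it decomposes this functor as a sum over $\Levi^F$-double cosets of elements $g \in \G^F$ for which $\Levi \cap {}^g\Levi$ contains a maximal torus. The minimality of $\Levi^*$ as the smallest $F^*$-stable Levi containing $\mathrm{C}_{(\G^*)^{F^*}}(s) \mathrm{C}^{\circ}_{\G^*}(s)$ ensures that on the block $e_s^{\Levi^F}$ only terms with $g$ normalising the pair $(\Levi, e_s^{\Levi^F})$ survive, and that these contribute only in the middle degree. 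A spectral sequence argument then forces each individual cohomology group of $\Y_\U^\G e_s^{\Levi^F}$ to be concentrated as claimed.

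Second, I would verify that $M := \Ho_c^{\dim \Y_\U}(\Y_\U^\G, \mathcal{O}) e_s^{\Levi^F}$ is projective as a left $\mathcal{O}\G^F e_s^{\G^F}$-module and as a right $\mathcal{O}\Levi^F e_s^{\Levi^F}$-module. This reduces to showing that the stabilisers of geometric points of $\Y_\U^\G$ in $\G^F \times (\Levi^F)^{\opp}$ have order prime to $\ell$, which holds because such stabilisers are contained in $\U^F$, a $p$-group with $p \neq \ell$. Combined with the vanishing of step one, the Mackey-type computation of $M \otimes_{\mathcal{O}\Levi^F} M^\vee$ and its opposite then produces the identity bimodules $\mathcal{O}\G^F e_s^{\G^F}$ and $\mathcal{O}\Levi^F e_s^{\Levi^F}$, completing the verification of the Brou\'e--Rickard criterion.

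The chief obstacle is integrality. Over $\overline{\mathbb{Q}}_\ell$ the statement reduces to Lusztig's character-theoretic Jordan decomposition and the theory of rational series, which is essentially formal at the level of virtual characters. Lifting to an honest $\mathcal{O}$-linear Morita equivalence requires controlling the $\ell$-torsion in the integral cohomology of $\Y_\U^\G$, which is not implied by the rational computation alone. The Mackey-type decomposition has to be established on the level of the actual \'etale cohomology complex of $\Y_\U^\G$ over $\mathcal{O}$, not merely of its Euler characteristic, and this is where the geometric analysis of the Deligne--Lusztig variety carried out by Bonnaf\'e and Rouquier is indispensable.
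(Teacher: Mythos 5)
The paper does not prove Theorem~\ref{BDRintro}; it is recorded as a theorem of Bonnaf\'e--Rouquier and cited to \cite{BoRo}, so there is no internal proof to compare against. Assessed on its own terms, your outline captures the correct shape of the Brou\'e--Rickard strategy (single-degree concentration, biprojectivity, identification of the two evaluation maps), and you are right that the integral lift is the genuine obstacle. However, the concentration step as you describe it does not work: the Mackey formula computes the composite $\,{}^*\R_\Levi^\G \circ \R_\Levi^\G$ on the level of the derived category (or of virtual characters), but this gives information about the alternating sum of cohomology groups and about the endomorphism ring of the total complex, not about the vanishing of individual groups $H^i_c(\Y_\U^\G,\mathcal{O})\,e_s^{\Levi^F}$ for $i\neq \dim\Y_\U$. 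No ``spectral sequence argument'' turns the Mackey decomposition into degreewise vanishing. In Bonnaf\'e--Rouquier the concentration over $K$ is a separate theorem (going back to Lusztig's work on Jordan decomposition, and handled in \cite{BoRo} via the stratified compactification of $\Y_\U^\G$ and the vanishing of the $e_s$-part of the boundary strata), and the torsion-freeness over $\mathcal{O}$ is then established by a further argument comparing numbers of simple modules on the two sides. You acknowledge the torsion difficulty at the end, which is the right instinct, but that acknowledgement is in tension with the earlier claim that a spectral sequence already forces concentration over $\mathcal{O}$.

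A secondary imprecision: the stabilizers of points of $\Y_\U^\G$ in $\G^F\times(\Levi^F)^{\opp}$ are $p$-groups because they sit inside conjugates $g\U g^{-1}$ (and the right $\Levi^F$-action is free), not because they are contained in $\U^F$ itself. The conclusion---that $\R\Gamma_c(\Y_\U^\G,\mathcal{O})$ is a perfect complex of $\mathcal{O}[\G^F\times(\Levi^F)^{\opp}]$-modules and hence biprojective once concentrated---is nonetheless the standard Deligne--Lusztig/Rickard fact you want. With the concentration argument repaired along the lines indicated above, the rest of your outline is consistent with the proof of \cite{BoRo}.
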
 
	
	
	This result was substantially improved by Bonnafé--Dat--Rouquier \cite{Dat}. For instance they prove that the algebras $\mathcal{O} \Levi^F e_s^{\Levi^F}$ and $\mathcal{O} \G^F e_s^{\G^F}$ are even splendid Rickard equivalent. This for instance implies that the Brauer categories of corresponding blocks are equivalent.
	
	\subsection*{Clifford theory and group automorphisms}
	
	In \cite{Jordan}, we extended the Bonnafé--Rouquier equivalences by including automorphisms of the finite group $\G^F$.
	Let $\G$ be a simple algebraic group of simply connected type (not of type $D_4$) such that $\G^F/\mathrm{Z}(\G^F)$ is a finite simple group. We let $ \iota: \G \hookrightarrow \Gtilde$ be a regular embedding. In \cite{Jordan} we showed the existence of a finite abelian group $\mathcal{A} \subseteq \mathrm{Aut}(\Gtilde^F)$ which together with the automorphisms induced by $\Gtilde^F$ generates the stabilizer of $e_s^{\G^F}$ in $\mathrm{Out}(\G^F)$. Denote by $\Delta \mathcal{A}$ the diagonal embedding of $\mathcal{A}$ in $\G^F \mathcal{A} \times (\Levi^F \mathcal{A})^{\mathrm{opp}}$.

	
	\begin{theo}\label{intro2}
		Assume that $|\mathcal{A}|$ is coprime to $\ell$.
		Then $H^{\mathrm{dim}(\Y_\U^\G)}_c(\Y_\U^\G,\mathcal{O}) e_s^{\Levi^F}$ extends to an $\mathcal{O}[ (\G^F \times (\Levi^F)^{\mathrm{opp}}) \Delta(\mathcal{A})]$-module $M$. Moreover, the bimodule $\mathrm{Ind}_{ (\G^F \times (\Levi^F)^{\mathrm{opp}}) \Delta(\mathcal{A}) }^{\tilde{\G}^F \mathcal{A} \times (\tilde{\Levi}^F \mathcal{A})^{\mathrm{opp}}} (M)$ induces a Morita equivalence between $\mathcal{O} \tilde{\Levi}^F \mathcal{A} e_s^{\Levi^F}$ and $\mathcal{O} \tilde{\G}^F \mathcal{A} e_s^{\G^F}$.
	\end{theo}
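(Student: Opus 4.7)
The plan is to combine two sources of extensions of the Bonnaf\'e--Rouquier bimodule: on the one hand, the $\mathcal{A}$-equivariant extension produced in \cite{Jordan}; on the other hand, the enlargement to the connected reductive group $\Gtilde$ for which Theorem \ref{BDRintro} is also valid. Once both are available, the Morita equivalence of the induced bimodule follows from a standard Clifford-theoretic induction principle (in the spirit of Marcus and Dade) relating Morita equivalences of group algebras to those of extensions by a common outer group.

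\textbf{Step 1: construction of $M$.} Set $M_0 := H_c^{\dim(\Y_\U^\G)}(\Y_\U^\G, \mathcal{O}) e_s^{\Levi^F}$. For each $a \in \mathcal{A}$ the block $e_s^{\G^F}$ is by construction $a$-stable. After replacing $\Para$ by an appropriate $\G^F$-conjugate (using that $\Levi^\ast$ is determined up to $(\G^\ast)^{F^\ast}$-conjugacy by $s$), one may arrange that $a$ acts on the variety $\Y_\U^\G$ and hence on $M_0$, yielding a family of isomorphisms ${}^a M_0 \cong M_0$ of modules over $\mathcal{O}[\G^F \times (\Levi^F)^{\opp}]$. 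Assembling this family into an honest $\mathcal{O}[(\G^F \times (\Levi^F)^{\opp})\Delta(\mathcal{A})]$-module is exactly the extension problem treated in \cite{Jordan}, where the hypothesis $\ell \nmid |\mathcal{A}|$ is used to discard the relevant cohomological obstructions.

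\textbf{Step 2: Morita equivalence after induction.} Write $H := \G^F \times (\Levi^F)^{\opp}$, $\tilde{H} := \Gtilde^F \mathcal{A} \times (\Ltilde^F \mathcal{A})^{\opp}$, and $\tilde{M} := \Ind_{H\Delta(\mathcal{A})}^{\tilde{H}}(M)$. To establish the Morita property I would verify
\[
\tilde{M} \otimes_{\mathcal{O}\Ltilde^F \mathcal{A}} \tilde{M}^{\vee} \;\cong\; e_s^{\G^F} \mathcal{O}\Gtilde^F \mathcal{A}, \qquad \tilde{M}^{\vee} \otimes_{\mathcal{O}\Gtilde^F \mathcal{A}} \tilde{M} \;\cong\; e_s^{\Levi^F} \mathcal{O}\Ltilde^F \mathcal{A}
\]
as bimodules. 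A Mackey decomposition applied to the inclusions $H \trianglelefteq H\Delta(\mathcal{A}) \leq \tilde{H}$ reduces both identities to the analogous ones for $M_0$ on $H$, which are precisely Theorem \ref{BDRintro} applied to the pair $(\G,\Levi)$ and, in a second instance, to the pair $(\Gtilde,\Ltilde)$ (the latter providing the Morita equivalence along the cosets of $H$ in $\Gtilde^F \times (\Ltilde^F)^{\opp}$).

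\textbf{Main obstacle.} The delicate point is the compatibility of the two extensions of $M_0$: the $\mathcal{A}$-equivariant structure coming from \cite{Jordan} must interact correctly with the $\Gtilde^F \times (\Ltilde^F)^{\opp}$-action that is automatically present on $M_0$. Since the Deligne--Lusztig variety is unchanged under the regular embedding, i.e.\ $\Y_\U^\G = \Y_\U^\Gtilde$ as varieties, and both extensions are built from the \'etale cohomology of this single variety, such a compatibility should exist; making it rigorous will require a careful tracking of the choices made in the construction of \cite{Jordan} against the natural $\Gtilde^F \times (\Ltilde^F)^{\opp}$-structure on $M_0$. Once this coherence is in place, the Morita property of $\tilde{M}$ becomes a formal consequence of the bimodule identities above together with standard manipulations of induction.
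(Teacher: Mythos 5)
The paper does not itself prove Theorem \ref{intro2}; it only records it as a result quoted from \cite{Jordan}. Your overall architecture -- extend the Bonnaf\'e--Rouquier bimodule $\mathcal{A}$-equivariantly, then induce up to $\Gtilde^F\mathcal{A} \times (\Ltilde^F\mathcal{A})^{\opp}$ and appeal to a Marcus--Dade type Clifford-theoretic principle for Morita equivalences -- is the right one, and it matches the strategy of \cite{Jordan}.

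There is, however, a concrete error at the exact point you flag as the ``main obstacle.'' You assert that $\Y_\U^\G = \Y_\U^\Gtilde$ ``as varieties.'' This is false: $\Gtilde \supsetneq \G$, so $\Gtilde/\U \supsetneq \G/\U$, and the Deligne--Lusztig variety for $\Gtilde$ is strictly larger than the one for $\G$. What is actually true, and what the present paper uses explicitly in the proof of Lemma \ref{intermediatesubgroup} via \cite[Proposition 1.1]{Godement}, is the canonical induction isomorphism
\[
\Ind_{(\G^F \times (\Levi^F)^{\opp})\Delta(\Ltilde^F)}^{\Gtilde^F \times (\Ltilde^F)^{\opp}} G\Gamma_c(\Y_\U^\G, \mathcal{O})\, e_s^{\Levi^F} \;\cong\; G\Gamma_c(\Y_\U^{\Gtilde}, \mathcal{O})\, e_s^{\Levi^F},
\]
arising from $\Gtilde^F = \G^F\Ltilde^F$ and the diagonal $\Ltilde^F$-action on $\Y_\U^\G$. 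So the coherence between the natural $\Delta(\Ltilde^F)$-structure and the constructed $\Delta(\mathcal{A})$-structure on $M_0$ is a genuine thing to prove, not a formality one gets for free from an equality of varieties. Relatedly, your Step 2 reduces the two bimodule identities ``by Mackey'' to ``the analogous ones for $M_0$ on $H$,'' but $(\G^F\times(\Levi^F)^{\opp})\Delta(\mathcal{A})$ is not normal in $\tilde H$, has index $[\Gtilde^F:\G^F]\cdot[\Ltilde^F:\Levi^F]\cdot|\mathcal{A}|$, and a direct Mackey computation does not visibly collapse to a single term without first promoting $M$ to a module over $(\G^F\times(\Levi^F)^{\opp})\Delta(\Ltilde^F\mathcal{A})$ and then applying the Marcus theorem in earnest -- which you cite in the preamble but do not actually deploy. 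As written the proof therefore has a gap at the crux of the argument.
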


This theorem most importantly shows that the Bonnafé--Rouquier Morita equivalences preserve the Clifford theory with respect to automorphisms coming from $\mathcal{A}$. Moreover, using the splendid Rickard equivalence we were also able to provide a local version of Theorem \ref{intro2}.

	\subsection*{Jordan decomposition for the Alperin--McKay conjecture}
	
Our aim in this article is to show that the strong equivariance properties obtained in Theorem \ref{intro2} and its local version can be used to give a reduction of the inductive Alperin--McKay condition to quasi-isolated blocks. 
	Our main theorem is then as follows:

	\begin{theo}[see Theorem \ref{maintheorem}]\label{intromain}
		Assume that every quasi-isolated $\ell$-block of a finite quasi-simple group of Lie type defined over a field of characteristic $p \neq \ell$ satisfies the inductive Alperin--McKay condition (in the sense of Hypothesis \ref{assumptionquasi}' below). Let $S$ be a simple group of Lie type  with non-exceptional Schur multiplier defined over a field of characteristic $p \neq \ell$ and $G$ its universal covering group. Then the inductive Alperin--McKay condition holds for every $\ell$-block of $G$.
	\end{theo}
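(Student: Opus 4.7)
The plan is to reduce an arbitrary $\ell$-block $b$ of the universal covering group $G=\G^F$ to a quasi-isolated $\ell$-block via the equivariant Bonnaf\'e--Rouquier Morita equivalence of Theorem \ref{intro2} (together with its local counterpart), and then to verify that every ingredient of the inductive Alperin--McKay condition in Hypothesis \ref{assumptionquasi}' is transported through this reduction.

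First, I fix a block $b = e_s^{\G^F}$ labelled by the conjugacy class of a semisimple $\ell'$-element $s \in (\G^\ast)^{F^\ast}$. By the very construction of $\Levi^\ast$ as the minimal $F^\ast$-stable Levi containing $\C_{(\G^\ast)^{F^\ast}}(s)\C_{\G^\ast}^\circ(s)$, the corresponding block $e_s^{\Levi^F}$ of $\Levi^F$ is quasi-isolated. Theorem \ref{intro2} then provides an $\mathcal{O}[(\tilde{\G}^F\mathcal{A}) \times (\tilde{\Levi}^F\mathcal{A})^{\mathrm{opp}}]$-bimodule inducing a Morita equivalence between $\mathcal{O}\tilde{\G}^F\mathcal{A}\, e_s^{\G^F}$ and $\mathcal{O}\tilde{\Levi}^F\mathcal{A}\, e_s^{\Levi^F}$. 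Since $\tilde{\G}^F$ together with $\mathcal{A}$ surjects onto the stabilizer of $b$ in $\mathrm{Out}(\G^F)$, this yields a height-preserving bijection of $\Irr_0$-sets that is equivariant under the full block stabilizer in $\mathrm{Out}(\G^F)$ and preserves central characters (this is where the hypothesis $(\ell,|\mathcal{A}|)=1$ and the behaviour of the equivalence on $\Z(\G^F)$ come in).

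Next, I invoke the local version of Theorem \ref{intro2} obtained in \cite{Jordan} via the splendid Rickard equivalence of Bonnaf\'e--Dat--Rouquier: for a defect group $D$ of $b$ one obtains a defect group $D'$ of $e_s^{\Levi^F}$ and a completely analogous equivariant Morita equivalence between the Brauer correspondents in $\mathrm{N}_{\tilde{\G}^F\mathcal{A}}(D)$ and $\mathrm{N}_{\tilde{\Levi}^F\mathcal{A}}(D')$. Combining with the global bijection produces a diagram of bijections that matches $\Irr_0(G,b)$ with $\Irr_0(\Levi^F, e_s^{\Levi^F})$ and $\Irr_0(\mathrm{N}_G(D),B)$ with the corresponding set for $e_s^{\Levi^F}$, and all four arrows are equivariant under the automorphisms stabilizing the block. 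At this point the verification of the inductive condition for $b$ in $G$ is reduced to the same statement for $e_s^{\Levi^F}$ in $\Levi^F$.

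The remaining task, which I expect to be the main obstacle, is to pass from the quasi-isolated block $e_s^{\Levi^F}$ of the (typically non-quasi-simple) group $\Levi^F$ to the hypothesis, which is formulated for quasi-isolated blocks of finite \emph{quasi-simple} groups of Lie type. Here one decomposes the derived subgroup of $\Levi$ according to its $F$-orbits of simple factors, uses a regular embedding $\Levi \hookrightarrow \tilde{\Levi}$ and standard Clifford theory to reduce to each factor separately, and then applies the assumption to each quasi-isolated block of each quasi-simple factor. The delicate point is to track, through all these reductions, the data required by Hypothesis \ref{assumptionquasi}': the choice of projective representation of the inertia group, compatibility with central characters, and equivariance with respect to the stabilizer in $\mathrm{Out}(\G^F)$. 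The assumption that $S$ has non-exceptional Schur multiplier is needed precisely to ensure that $G$ is indeed $\G^F$ (or a quotient by a central $\ell'$-subgroup) and that the central extensions arising from the decomposition of $\Levi^F$ do not introduce exotic components that fall outside the scope of the hypothesis.
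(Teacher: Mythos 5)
Your broad strategy matches the paper's: apply the equivariant Bonnaf\'e--Rouquier Morita equivalence (Theorem \ref{intro2}) and its local analogue to transport the problem to the Levi subgroup $\Levi^F$, then decompose $[\Levi,\Levi]^F$ into quasi-simple factors and apply Hypothesis \ref{assumptionquasi}' to each. However, there is a substantial gap in the middle of your argument, at the sentence ``At this point the verification of the inductive condition for $b$ in $G$ is reduced to the same statement for $e_s^{\Levi^F}$ in $\Levi^F$.''

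The inductive Alperin--McKay condition does not merely ask for an equivariant, central-character-preserving bijection $\Irr_0(G,b) \to \Irr_0(M,B_Q)$ together with the analogous local data; it requires the much stronger relation $(G\rtimes\Gamma_\chi,G,\chi) \geq_b (M\rtimes\Gamma_\chi,M,\Psi(\chi))$ on character triples for every height-zero $\chi$, encoding a compatibility of projective representations, their values on centralizers, and the induced blocks of all intermediate subgroups. The Morita equivalence and its equivariance do \emph{not} hand you this relation for free; extracting it is the core technical content of the paper. Concretely, the paper passes through Theorem \ref{12} (a Cabanes--Sp\"ath type criterion) whose conditions (i)--(v) must each be verified: the extendibility conditions are handled via Theorem \ref{starcondition}, Lemma \ref{AE}, Lemma \ref{AEloc} and, for type $D_4$, the entire special analysis of Section \ref{caseD4}; the compatibility with multiplication by linear characters and with central characters comes from Lemmas \ref{lemma1} and \ref{lemma2} about Lusztig induction; and the crucial block-induction condition (v) is established through Lemma \ref{intermediatesubgroup}, whose proof tracks Harris--Kn\"orr correspondents through a whole diagram of local/global Rickard equivalences. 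You also implicitly treat $\mathcal{A}$ and $\mathcal{B}$ as interchangeable, which the paper shows cannot be done directly (Remark \ref{centralizerproblem}); a translation lemma between the two is needed. None of this appears in your proposal.

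A second omission concerns the subgroup one works with in the local statement. Hypothesis \ref{assumptionquasi} gives iAM-bijections for the quasi-simple factors with respect to either the full defect group $D_i$ (case (b)) or the Cabanes subgroup $Q_i$ (case (a)). After assembling these into a bijection for $[\Levi,\Levi]^F$ one obtains a characteristic subgroup $Q_0$ of $D_0 = D\cap L_0$, but this need not be a characteristic subgroup of the defect group $D$ of $c$. The paper bridges this gap with the Dade--Nagao--Glauberman correspondence (Lemma \ref{DGN} and the following proposition) in case (b), and by the extra requirement $\mathrm{N}_L(Q)=\mathrm{N}_L(Q_0)$ built into Hypothesis \ref{assumptionquasi}' in case (a). Your proposal does not address this transition. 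Finally, two smaller inaccuracies: $b$ is a block idempotent \emph{dividing} $e_s^{\G^F}$, not equal to it ($e_s^{\G^F}$ is a sum of blocks); and the relevant notion is \emph{strictly} quasi-isolated (Lemma \ref{commutator}), which is weaker than quasi-isolated precisely because $\Levi^\ast$ is chosen minimal for $\mathrm{C}_{(\G^\ast)^{F^\ast}}(s)\mathrm{C}_{\G^\ast}^\circ(s)$ rather than for $\mathrm{C}_{\G^\ast}(s)$.
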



	Quasi-isolated semisimple elements for reductive groups have been classified by Bonnaf\'e \cite{Bonnafe}. In each case there are a small number of possibilities which have been well-described. Moreover, the quasi-isolated blocks of groups of Lie type are better understood by fundamental work of Cabanes--Enguehard and recent work of Enguehard and Kessar--Malle, see \cite{KessarMalle} for a more precise historical account. Therefore, our reduction is a first step towards understanding the inductive condition for these groups and we hope that it will provide significant simplifications in verifying it.


	\subsection*{Acknowledgement}
	%
	
	%
	%
The results of this article were obtained as part of my Phd thesis at the Bergische Universität Wuppertal. Therefore, I would like to express my gratitude to my supervisor Britta Späth for suggesting this topic and for her constant support. I would like to thank Marc Cabanes for reading through my thesis and for many interesting discussions. I am deeply indebted to Gunter Malle for his suggestions and his thorough reading of an earlier version of this paper. I would also like to thank Damiano Rossi for pointing out an error in a previous version of this paper.
	\ \\
	This material is partly based upon work supported by the NSF under Grant DMS-1440140 while the author was in residence at the MSRI, Berkeley CA. The research was conducted in the framework of the research training group GRK 2240: Algebro-geometric
	Methods in Algebra, Arithmetic and Topology, which is funded by the DFG.

\section{Application to the inductive Alperin--McKay condition}\label{Applications}

%
%
%
%
%
\subsection{The inductive Alperin--McKay condition}\label{iAM}

The aim of this section is to recall the inductive Alperin--McKay condition as introduced in \cite[Definition 7.2]{IAM}. Let $\ell$ be a prime and $K$ be a finite field extension of $\mathbb{Q}_\ell$. We say that $K$ is \textit{large enough for a finite group} $G$ if $K$ contains all roots of unity whose order divides the exponent of the group $G$. In the following, $K$ denotes a field which we assume to be large enough for the finite groups under consideration. We denote by $\mathcal{O}$ the ring of integers of $K$ over $\mathbb{Z}_\ell$ and by $k=\mathcal{O}/J(\mathcal{O})$ its residue field. We will use $\Lambda$ to interchangeably denote $\mathcal{O}$ or $k$. Moreover, $\Irr(G)$ will denote the set of $K$-valued irreducible characters of $G$.

Recall that a \textit{character triple} $(G,N,\theta)$ consists of a finite group $G$ with normal subgroup $N$ and a $G$-invariant character $\theta \in \Irr(N)$. A \textit{projective representation} is a set-theoretic map $\mathcal{P}: G \to \mathrm{GL}_{n}(K)$ such that for all $g, g' \in G$ there exists a scalar $\alpha(g,g') \in K$ with $\mathcal{P}(g g')= \alpha(g,g') \mathcal{P}(g) \mathcal{P}(g')$. The projective representation $\mathcal{P}: G \to \mathrm{GL}_{n}(K)$ is said to be \textit{associated to $(G,N, \theta)$} if the restriction $\mathcal{P}|_N$ affords the character $\theta$ and for all $n \in N$ and $g \in G$ we have $\mathcal{P}(gn)=\mathcal{P}(g) \mathcal{P}(n)$ and $\mathcal{P}(ng)=\mathcal{P}(n) \mathcal{P}(g)$.

We recall the following order relation on character triples, see \cite[Definition 2.1]{LocalRep}:

\begin{definition}
	Let $(G,N,\theta)$ and $(H,M, \theta')$ be two character triples. We write
	$$(G,N,\theta) \geq (H,M,\theta')$$
	if the following conditions are satisfied:
	\begin{enumerate}[label=(\roman*)]
		\item $G=NH$, $M= N \cap H$ and $\mathrm{C}_G(N) \leq H$.
		\item There exist projective representations $\mathcal{P}$ and $\mathcal{P}'$ associated with $(G,N, \theta)$ and $(H,M, \theta')$ such that their factor sets $\alpha $ and $\alpha'$ satisfy $\alpha|_{H \times H}= \alpha'$.
	\end{enumerate}
\end{definition}

Let $\mathcal{P}: G \to \mathrm{GL}_{n}(K)$ be a projective representation associated to the character triple $(G,N, \theta)$. Then for $c \in \mathrm{C}_G(N)$ we have $\mathcal{P}(c) \mathcal{P}(n)= \mathcal{P}(n) \mathcal{P}(c)$ for every $n \in N$. Since $\mathcal{P}|_N$ affords the irreducible character $\theta$ it follows by Schur's lemma that $\mathcal{P}(c)$ is a scalar matrix.

For the inductive Alperin--McKay condition we need a refinement of the order relation ``$\geq$'' on character triples. More precisely, we also require that the scalars of the projective representations on elements of $\mathrm{C}_G(N)$ coincide:

\begin{definition}
	Let $(G,N,\theta)$ and $(H,M, \theta')$ be two character triples such that $(G,N,\theta) \geq (H,M,\theta')$ via the projective representations $\mathcal{P}$ and $\mathcal{P'}$. Then we write
	$$(G,N,\theta)  \geq_c(H,M,\theta')$$ if for every $c \in \mathrm{C}_G(N)$ the scalars associated to $\mathcal{P}(c)$ and $\mathcal{P}'(c)$ coincide.
	%
\end{definition}

%
Let $N$ be a normal subgroup of a finite group $G$ and $\chi \in \Irr(G)$. Then we write $\Irr(N \mid \chi)$ for the set of irreducible constituents of $\mathrm{Res}_N^G(\chi)$. Moreover for $\theta \in \Irr(N)$ we write $\Irr(G \mid \theta)$ for the set of all irreducible constituents $\chi$ of the induced character $\Ind_N^G(\theta)$. We then say that the character $\chi$ \textit{covers} the character $\theta$. 

\begin{theorem}\label{sigmabijections}
	Let $(G,N,\theta)$ and $(H,M, \theta')$ be two character triples such that $(G,N,\theta) \geq (H,M,\theta')$ with respect to the projective representations $\mathcal{P}$ and $\mathcal{P}'$. Then for every intermediate subgroup $N \leq J \leq G$ there exists a bijection
	$$\sigma_J: \mathbb{N} \Irr(J \mid \theta) \to \mathbb{N}\Irr(J \cap H \mid \theta')$$ such that $\sigma_J(\Irr(J \mid \theta))=\Irr(J \cap H \mid \theta')$.
\end{theorem}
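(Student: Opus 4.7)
The strategy is to reduce both sides to ordinary (projective) characters of a common quotient group and then transport via the coincidence of factor sets. First I would establish the group-theoretic identification: since $N \leq J$ and $G = NH$, the modular law gives $J = J \cap G = J \cap (NH) = N(J \cap H)$, and therefore the natural map $J \cap H \hookrightarrow J$ induces an isomorphism
\[
(J \cap H)/M \;\xrightarrow{\sim}\; J/N,
\]
because $M = N \cap H = N \cap (J \cap H)$. This will be the bridge between the two sides.

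Next I would invoke the standard projective Clifford correspondence. Restricting $\mathcal{P}$ to $J$ yields a projective representation of $J$ whose factor set $\alpha|_{J \times J}$ is trivial on $N$ (since $\mathcal{P}|_N$ is an ordinary representation affording $\theta$), so $\alpha|_{J \times J}$ descends to a $2$-cocycle $\bar{\alpha}_J$ on $J/N$. The map sending an $\bar{\alpha}_J^{-1}$-projective irreducible representation $\mathcal{X}$ of $J/N$ to the ordinary character afforded by $\mathcal{P}|_J \otimes \mathcal{X}$ induces a bijection
\[
\Irr(J/N, \bar{\alpha}_J^{-1}) \;\longleftrightarrow\; \Irr(J \mid \theta),
\]
and analogously $\mathcal{P}'$ induces a bijection $\Irr((J\cap H)/M, \bar{\alpha}'_{J\cap H}{}^{-1}) \leftrightarrow \Irr(J \cap H \mid \theta')$. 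These are classical facts from Clifford theory (as, e.g., in the treatments of Isaacs or Navarro).

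Now the hypothesis $(G,N,\theta) \geq (H,M,\theta')$ provides the crucial link: $\alpha|_{H \times H} = \alpha'$, so in particular $\alpha$ and $\alpha'$ agree on $(J \cap H) \times (J \cap H)$. Transporting via the isomorphism $(J \cap H)/M \cong J/N$ constructed above, the two $2$-cocycles $\bar{\alpha}_J$ and $\bar{\alpha}'_{J \cap H}$ coincide, so the sets of projective irreducible representations are naturally in bijection (indeed identical after identification). Composing the three bijections yields
\[
\sigma_J \colon \Irr(J \mid \theta) \;\longleftrightarrow\; \Irr(J \cap H \mid \theta'),
\]
which extends $\mathbb{N}$-linearly to the desired bijection on $\mathbb{N}\Irr(J \mid \theta)$, and the last clause of the theorem is automatic from the construction.

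The main substantive point to check carefully is the compatibility of the Clifford-theoretic bijection with the group-theoretic isomorphism $(J \cap H)/M \cong J/N$: one must verify that the $2$-cocycles really match after passing to the quotients, not merely on the covering groups. This is where the equality $\alpha|_{H \times H} = \alpha'$ from the definition of $\geq$ is used in an essential way. Everything else is routine bookkeeping with projective representations.
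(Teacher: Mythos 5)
The paper does not give its own proof here; it simply cites \cite[Theorem 2.2]{LocalRep}, and your argument is a faithful reconstruction of the standard projective Clifford correspondence that underlies that result. The reduction to $J/N \cong (J\cap H)/M$, the descent of the factor sets modulo $N$ and $M$, the matching of the resulting $2$-cocycles via $\alpha|_{H\times H}=\alpha'$, and the $\mathbb{N}$-linear extension are all exactly the ingredients used in the cited reference, so the proposal is correct and takes essentially the same route.
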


\begin{proof}
	This is \cite[Theorem 2.2]{LocalRep}.
\end{proof}

The following properties which are collected in the next lemma are a consequence of the fact that ``$\geq$'' induces a strong isomorphism of character triples in the sense of \cite[Problem 11.13]{Isaacs}.

\begin{lemma}\label{propertiesofc}
	Let $(G,N,\theta)$ and $(H,M, \theta')$ be two character triples satisfying $(G,N,\theta) \geq (H,M,\theta')$.
	Then for any $N \leq J_1 \leq J_2 \leq G$ and $\chi \in \mathbb{N} \Irr(J_2 \mid \theta)$ the following holds:
	\begin{enumerate}[label=(\alph*)]
		\item $\mathrm{Res}^{J_2 \cap H}_{J_1 \cap H}(\sigma_{J_2}(\chi))= \sigma_{J_1}( \mathrm{Res}_{J_1}^{J_2}(\chi))$.
		\item $(\sigma_{J_2}(\chi \beta))= \sigma_{J_2}(\chi) \mathrm{Res}^{J_2}_{J_2 \cap H}(\beta)$ for every $\beta \in \mathbb{N} \Irr(J_2/N)$.
		\item $(\sigma_{J_2}(\chi))^{h}= \sigma_{J_2^h}(\chi^h)$ for every $h \in H$.
	\end{enumerate}
\end{lemma}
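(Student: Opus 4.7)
The plan is to read off each property from an explicit description of the bijection $\sigma_J$ underlying Theorem \ref{sigmabijections}. Since $G = NH$ with $M = N \cap H$, for every intermediate $N \leq J \leq G$ the second isomorphism theorem furnishes a canonical isomorphism $\varphi_J \colon (J \cap H)/M \xrightarrow{\sim} J/N$. Using the projective representations $\mathcal{P}$ of $G$ and $\mathcal{P}'$ of $H$ whose factor sets satisfy $\alpha|_{H \times H} = \alpha'$, every $\chi \in \Irr(J \mid \theta)$ is afforded by $\mathcal{P}|_J \otimes \mathcal{Q}_\chi$ for a unique projective representation $\mathcal{Q}_\chi$ of $J/N$ with factor set $\alpha^{-1}|_{J \times J}$; the bijection $\sigma_J$ then sends $\chi$ to the character afforded by $\mathcal{P}'|_{J \cap H} \otimes (\mathcal{Q}_\chi \circ \varphi_J)$, and is extended additively to non-negative integer combinations.

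Parts (a) and (b) are essentially formal once this description is in hand. For (a), restricting $\chi$ from $J_2$ to $J_1$ corresponds to restricting $\mathcal{Q}_\chi$ from $J_2/N$ to $J_1/N$, and the isomorphisms $\varphi_{J_1}, \varphi_{J_2}$ commute with the inclusion $J_1 \cap H \hookrightarrow J_2 \cap H$; applying $\sigma_{J_1}$ to $\Res_{J_1}^{J_2}(\chi)$ therefore produces the same character as restricting $\sigma_{J_2}(\chi)$ from $J_2 \cap H$ to $J_1 \cap H$. For (b), tensoring $\chi$ by $\beta \in \Irr(J_2/N)$ replaces $\mathcal{Q}_\chi$ by $\mathcal{Q}_\chi \otimes \beta$, and pulling this extra tensor factor back along $\varphi_{J_2}$ yields precisely the character $\Res_{J_2 \cap H}^{J_2}(\beta)$ viewed on $(J_2 \cap H)/M$, which gives the claimed factorisation.

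The most delicate point is (c). Conjugation by $h \in H \subseteq G$ transforms $\mathcal{P}|_{J_2}$ into the projective representation $g \mapsto \mathcal{P}(h)\mathcal{P}(g)\mathcal{P}(h^{-1})$ of $J_2^h$, up to scalars prescribed by $\alpha$, and the analogous formula holds for $\mathcal{P}'|_{J_2 \cap H}$ because $h \in H$. The equality $\alpha|_{H \times H} = \alpha'$ ensures that the scalar ambiguities arising on the two sides coincide, so that the intertwiner $\mathcal{P}(h)$ identifies $\mathcal{Q}_\chi$ with $\mathcal{Q}_{\chi^h}$ compatibly with the obvious commutative square $\varphi_{J_2^h} \circ c_h = c_h \circ \varphi_{J_2}$, where $c_h$ denotes conjugation. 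This forces $(\sigma_{J_2}(\chi))^h = \sigma_{J_2^h}(\chi^h)$. The principal obstacle throughout is this factor-set bookkeeping in (c): one must verify that the conjugation scalars on the $G$-side and on the $H$-side cancel identically, which is precisely where the defining hypothesis of ``$\geq$''—the agreement of the two factor sets on $H \times H$—plays its essential role.
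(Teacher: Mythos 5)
Your proof reconstructs the standard explicit description of the bijections $\sigma_J$ via the tensor decomposition $\mathcal{P}|_J\otimes\mathcal{Q}_\chi$ transported along the second-isomorphism identification $(J\cap H)/M\cong J/N$, which is the argument underlying the reference \cite[Corollary 2.4]{LocalRep} the paper cites, and it is correct. The decisive observation in (c) --- that since both $h$ and the conjugated elements of $J_2^h\cap H$ lie in $H$, all conjugation scalars on each side are values of $\alpha$, respectively $\alpha'$, at pairs in $H\times H$ where the two factor sets coincide --- is exactly right, though the phrase ``the intertwiner $\mathcal{P}(h)$ identifies $\mathcal{Q}_\chi$ with $\mathcal{Q}_{\chi^h}$'' is slightly loose: $\mathcal{P}(h)$ acts only on the $\theta$-tensor factor, and the relation between $\mathcal{Q}_\chi$ and $\mathcal{Q}_{\chi^h}$ is a scalar function which one then matches against the corresponding scalar on the $H$-side.
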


\begin{proof}
	This is \cite[Corollary 2.4]{LocalRep}.
\end{proof}

\begin{lemma}\label{propertiesofcpartd}
	Let $(G,N,\theta)$ and $(H,M, \theta')$ be two character triples with $(G,N,\theta) \geq_{c} (H,M,\theta')$. Then for every $N \leq J \leq G$ and $\kappa \in \Irr( \mathrm{C}_J(G))$ we have 
	$$\sigma_J( \Irr(J \mid \theta) \cap \Irr(J \mid \kappa)) \subseteq \Irr(J \cap H \mid \kappa).$$
\end{lemma}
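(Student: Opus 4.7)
The plan is to unfold the explicit construction of $\sigma_J$ via projective representations underlying the proof of \cite[Theorem 2.2]{LocalRep} and then to read off the statement from the $\geq_c$ hypothesis together with a straightforward calculation on the subgroup $C := \mathrm{C}_J(G)$.

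First I would set up notation. Let $\mathcal{P}$ and $\mathcal{P}'$ be the projective representations realising $(G,N,\theta)\geq_c(H,M,\theta')$, of degrees $n:=\theta(1)$ and $n':=\theta'(1)$, with factor sets $\alpha$ and $\alpha'$ satisfying $\alpha|_{H\times H}=\alpha'$. By standard projective Clifford theory (e.g.\ \cite[Chapter 11]{Isaacs}), the assignment $\tilde{\psi}\mapsto\chi_{\tilde{\psi}}$ defined by
$$\chi_{\tilde{\psi}}(g) = \operatorname{tr}(\mathcal{P}(g))\cdot\operatorname{tr}(\tilde{\psi}(gN))$$
is a bijection from the set of irreducible projective representations of $J/N$ with factor set $\alpha^{-1}|_{J/N}$ onto $\Irr(J\mid\theta)$, and similarly for $J\cap H$ with $\mathcal{P}'$. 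Because $\alpha|_{H\times H}=\alpha'$, the canonical isomorphism $J/N\to(J\cap H)/M$ sending $gN$ to $gM$ (for $g\in J\cap H$) identifies the two sets of parametrising projective representations, and under this identification $\sigma_J$ is simply the induced map on the associated ordinary characters.

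Second, I would evaluate on $C$. Condition (i) of the definition of $\geq$ gives $C\leq\mathrm{C}_G(N)\leq H$, so in particular $C\leq J\cap H$. For $c\in C$, Schur's lemma forces $\mathcal{P}(c)=\lambda(c)\mathrm{Id}_n$ and $\mathcal{P}'(c)=\lambda'(c)\mathrm{Id}_{n'}$, and by $\geq_c$ we have $\lambda(c)=\lambda'(c)=:\mu(c)$. The map $c\mapsto\mu(c)\tilde{\psi}(cN)$ has factor set $\alpha\cdot\alpha^{-1}=1$ on $C\times C$, hence is an ordinary representation of $C$; the same holds for $c\mapsto\mu(c)\tilde{\psi}(cM)$. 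For $c \in C$ one has $\chi_{\tilde{\psi}}(c) = n\mu(c)\operatorname{tr}(\tilde{\psi}(cN))$ and $\sigma_J(\chi_{\tilde{\psi}})(c) = n'\mu(c)\operatorname{tr}(\tilde{\psi}(cM))$, and under the identification $cN\leftrightarrow cM$ the ordinary representations $c \mapsto \mu(c)\tilde{\psi}(cN)$ and $c \mapsto \mu(c)\tilde{\psi}(cM)$ of $C$ are literally equal. Hence their characters have the same set of irreducible constituents, so $\kappa\in\Irr(C)$ appears in $\chi_{\tilde{\psi}}|_C$ if and only if it appears in $\sigma_J(\chi_{\tilde{\psi}})|_C$; this is exactly the desired inclusion (with equality in fact).

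I do not anticipate any serious obstacle; the argument is essentially formal bookkeeping once the construction of $\sigma_J$ from \cite{LocalRep} is recast in the projective-representation language. The two mild technicalities are verifying that $c\mapsto\mu(c)\tilde{\psi}(cN)$ is ordinary on $C$ (immediate from the two factor sets being mutual inverses) and checking that $C$ actually lies in $J\cap H$ (which follows from $\mathrm{C}_G(N)\leq H$).
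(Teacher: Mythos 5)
Your proof is correct. The paper itself only cites \cite[Lemma 2.10]{LocalRep} for this lemma, whereas you unfold the construction of $\sigma_J$ via projective representations and give the argument in full; the route you take (writing $\chi_{\tilde\psi}(g)=\operatorname{tr}\mathcal{P}(g)\cdot\operatorname{tr}\tilde\psi(gN)$, using $\mathrm{C}_J(G)\leq\mathrm{C}_G(N)\leq H$ so that $\geq_c$ forces the two scalar functions to agree on $C$, and observing that $\chi_{\tilde\psi}|_C$ and $\sigma_J(\chi_{\tilde\psi})|_C$ are then the same ordinary character of $C$ up to the scalar $n/n'$) is exactly the standard argument underlying the cited result. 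Two minor points worth making explicit if you write this up: $C=\mathrm{C}_J(G)=J\cap\mathrm{Z}(G)$ is central in $J$ and in $J\cap H$, so each irreducible character of $J$ (resp.\ $J\cap H$) restricts to a multiple of a single linear character of $C$, which is what makes the proportionality of the restrictions immediately give the containment; and the identity $\alpha|_{H\times H}=\alpha'$ together with the definition of $\geq_c$ is what guarantees that the identification $cN\leftrightarrow cM$ really matches $\tilde\psi$ with itself and $\mu$ with itself, so there is no hidden twist by a linear character of $C$.
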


\begin{proof}
	See \cite[Lemma 2.10]{LocalRep}.
\end{proof}

Let $G$ be a finite group and $\chi \in \Irr(G)$. Then we write $\mathrm{bl}(\chi)$ for the $\ell$-block of $G$ containing $ \chi$. The following definition is taken from \cite[Definition 4.2]{LocalRep}.
%

\begin{definition}\label{blockisomorphism}
	Let $(G,N,\theta)$ and $(H,M, \theta')$ be two character triples  with $(G,N,\theta) \geq_{c} (H,M,\theta')$. Then we write 
	$$(G,N,\theta) \geq_{b} (H,M,\theta')$$
	if the following hold:
	\begin{enumerate}[label=(\roman*)]
		\item A defect group $D$ of $\mathrm{bl}(\theta')$ satisfies $\mathrm{C}_G(D) \leq H$.
		\item The maps $\sigma_J$ induced by $(\mathcal{P},\mathcal{P'})$ satisfy
		$$\mathrm{bl}(\psi)= \mathrm{bl}(\sigma_J(\psi))^J$$
		for every $N \leq J \leq G$ and $\psi \in \Irr(J \mid \theta)$.
	\end{enumerate}
	
\end{definition}
%
%

\begin{remark}
	Let $G$ be a finite group and $b$ a block of $G$ with defect group $D$. Let $M$ be a subgroup of $G$ containing $\mathrm{N}_G(D)$. By Brauer correspondence there exists a unique block $B_D$ of $\mathrm{N}_G(D)$ such that $(B_D)^G=b$. On the other hand, since $\mathrm{N}_M(D)= \mathrm{N}_G(D)$ Brauer correspondence yields a bijection $\mathrm{Bl}(\mathrm{N}_G(D) \mid D) \to \mathrm{Bl}(M \mid D)$. It follows that $B:=(B_D)^M$ is the unique block of $M$ with defect group $D$ satisfying $B^G=b$, see \cite[Problem 4.2]{NavarroBook}.
\end{remark}

If $G$ is a finite group with normal subgroup $N$ and $\chi  \in \Irr(N)$ an irreducible character, then we write $G_\chi$ for the inertia group of the character $\chi$ in $G$.

\begin{definition}\label{IAMsuitable}
	Let $G$ be a finite group and $b$ a block of $G$ with non-central defect group $D$. Assume that for $\Gamma:= \mathrm{N}_{\mathrm{Aut}(G)}(D,b)$ there exist
	\begin{enumerate}[label=(\roman*)]
		\item a $ \Gamma$-stable subgroup $M$ with $\mathrm{N}_G(D) \leq M \lneq G$;
		\item a $\Gamma$-equivariant bijection $\Psi: \Irr_0(G , b) \to \Irr_0(M , B)$ where $B \in \mathrm{Bl}(M \mid D)$ is the unique block with $B^G=b$; 
		\item $\Psi( \Irr_0(b  \mid \nu) ) \subseteq \Irr_0(B \mid \nu )$ for every $\nu \in \Irr(\mathrm{Z}(G))$ and
		$$( G \rtimes \Gamma_\chi, G, \chi) \geq_{b} ( M \rtimes \Gamma_\chi, M, \Psi(\chi) ),$$
		for every $ \chi \in \Irr_0(G,b).$
	\end{enumerate}
	Then we say that $\Psi: \Irr_0(G , b) \to \Irr_0(M , B)$ is an \textit{iAM-bijection for the block} $b$ with respect to the subgroup $M$.
\end{definition}

In the following, we will usually work with iAM-bijections. However, to formulate the inductive iAM-condition we need a slightly stronger version of Definition \ref{IAMsuitable}:

\begin{definition}
	We say that $\Psi: \Irr_0(G , b) \to \Irr_0(M , B)$ is a \textit{strong iAM-bijection for the block $b$} if it is an iAM-bijection which additionally satisfies
	$$( G/ Z \rtimes \Gamma_\chi, G/Z, \overline{\chi}) \geq_{b} ( M/Z \rtimes \Gamma_\chi, M/Z, \overline{\Psi(\chi)} ),$$
	for every $ \chi \in \Irr_0(G,b)$ and $Z= \mathrm{Ker}(\chi) \cap \mathrm{Z}(G)$, where $\overline{\chi}$ and $\overline{\Psi(\chi)}$ lift to $\chi$ and $\Psi(\chi)$, respectively.
\end{definition}
%
Note that if $( G/ Z \rtimes \Gamma_\chi, G/Z, \overline{\chi}) \geq_{b} ( M/Z \rtimes \Gamma_\chi, M/Z, \overline{\Psi(\chi)} )$ then we automatically have $( G \rtimes \Gamma_\chi, G, \chi) \geq_{b} ( M \rtimes \Gamma_\chi, M, \Psi(\chi) )$ by \cite[Lemma 3.12]{JEMS}. The converse on the other hand is not necessarily true. We will however see that in most cases the difference between these two notions is not that big, see also Remark \ref{strong versus weak}.

Condition (iii) in Definition \ref{IAMsuitable} is made accessible through the following theorem:

\begin{theorem}[Butterfly theorem]\label{Butterfly}
	Let $G_2$ be a finite group with normal subgroup $N$. Let $(G_1,N, \theta)$ and $(H_1,M, \theta')$ be two character triples with $(G_1,N, \theta)\geq_b(H_1,M, \theta')$. Assume that via the canonical morphism $\varepsilon_i:G_i \to \mathrm{Aut}(N)$, $i=1,2$, we have $\varepsilon_1(G_1)=\varepsilon_2(G_2)$. Then for $H_2:= \varepsilon_2^{-1} \varepsilon_1(H_1)$ we have
	$$(G_2,N,\theta) \geq_b (H_2,M,\varphi).$$
\end{theorem}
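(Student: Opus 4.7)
The plan is to use the classical butterfly (fiber product) construction to transport the data witnessing $(G_1,N,\theta) \geq_b (H_1,M,\theta')$ to data witnessing $(G_2,N,\theta) \geq_b (H_2,M,\varphi)$. I would form
$$\tilde{G} := \{ (g_1, g_2) \in G_1 \times G_2 : \varepsilon_1(g_1) = \varepsilon_2(g_2) \},$$
with surjective projections $\pi_i : \tilde{G} \to G_i$ whose kernels are $\ker(\pi_1) = \{1\} \times \C_{G_2}(N)$ and $\ker(\pi_2) = \C_{G_1}(N) \times \{1\}$. Identifying $N$ with its diagonal copy in $\tilde{G}$ and setting $\tilde{H} := \pi_1^{-1}(H_1)$, the hypothesis $H_2 = \varepsilon_2^{-1}\varepsilon_1(H_1)$ gives $\pi_2(\tilde{H}) = H_2$, and $M = N \cap H_1 = N \cap H_2$ since elements of $N$ belong to $H_i$ if and only if their inner action on $N$ lies in $\varepsilon_1(H_1) = \varepsilon_2(H_2)$; thus $\varphi$ may naturally be identified with $\theta'$.

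Let $\mathcal{P}, \mathcal{P}'$ be projective representations witnessing the $\geq_b$ relation on the $G_1$-side. First I would pull these back along $\pi_1$ to projective representations $\tilde{\mathcal{P}}, \tilde{\mathcal{P}}'$ of $\tilde{G}, \tilde{H}$, which yields $(\tilde{G},N,\theta) \geq_c (\tilde{H},M,\theta')$ with identical factor sets. Then I would choose a scalar function $\eta : \tilde{G} \to K^\times$ that cancels the scalars $\lambda(c')$ by which $\tilde{\mathcal{P}}$ acts on $\ker(\pi_2) = \C_{G_1}(N) \times \{1\}$; the $\geq_c$ assumption on the $G_1$-side guarantees $\tilde{\mathcal{P}}'$ acts by the same scalars on $\ker(\pi_2) \subset \tilde{H}$ (using $\C_{G_1}(N) \leq H_1$), so a single $\eta$ works for both. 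The modified representations $\eta \tilde{\mathcal{P}}$ and $\eta|_{\tilde{H}} \tilde{\mathcal{P}}'$ then descend through $\pi_2$ to projective representations $\mathcal{P}_2, \mathcal{P}_2'$ associated to $(G_2,N,\theta)$ and $(H_2,M,\theta')$. The compatibility $\alpha_2|_{H_2 \times H_2} = \alpha_2'$ is preserved, and on $\C_{G_2}(N) \cong \{1\} \times \C_{G_2}(N) \subset \ker(\pi_1)$ the resulting scalars agree because $\tilde{\mathcal{P}}$ and $\tilde{\mathcal{P}}'$ were both the identity there before modification; hence $(G_2,N,\theta) \geq_c (H_2,M,\theta')$.

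The main obstacle is the block-theoretic refinement. The defect-group condition is straightforward: for a defect group $D$ of $\mathrm{bl}(\theta')$ one has $\varepsilon_2(\C_{G_2}(D)) = \varepsilon_1(\C_{G_1}(D))$, since both equal the stabiliser in $\varepsilon_1(G_1) = \varepsilon_2(G_2)$ of $D$ fixed pointwise, so $\C_{G_1}(D) \leq H_1$ forces $\C_{G_2}(D) \leq H_2$. The delicate point is the block identity $\mathrm{bl}(\psi) = \mathrm{bl}(\sigma_J(\psi))^J$ for intermediate $N \leq J \leq G_2$: one must check that the $\sigma_J$-bijection produced from $\mathcal{P}_2, \mathcal{P}_2'$ sends each $\psi \in \Irr(J \mid \theta)$ into the Brauer-correspondent block. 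I would deduce this by passing via the preimages $\tilde{J} := \pi_2^{-1}(J)$ and $\tilde{J} \cap \tilde{H}$: the bijections $\sigma_{\tilde{J}}$ constructed from $\tilde{\mathcal{P}}, \tilde{\mathcal{P}}'$ correspond, via the inflation/deflation isomorphisms along the central kernels $\ker(\pi_i)$, both to $\sigma_J$ on the $G_2$-side and to the $\sigma$-bijection for $\pi_1(\tilde{J})$ on the $G_1$-side. Since central inflation and deflation commute with Brauer correspondence, and since the scalar modification $\eta$ does not alter central characters on $N$-containing subgroups (having been chosen trivial on $N$), the block-preservation property transported from $(G_1,N,\theta) \geq_b (H_1,M,\theta')$ yields the desired identity on the $G_2$-side. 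The technical delicacy lies in verifying that the chosen section of $\pi_2$ used to define $\eta$ is compatible simultaneously on $\tilde{G}$ and $\tilde{H}$, which is ensured by the inclusion $\C_{G_1}(N) \leq H_1$ provided by the $\geq_c$ hypothesis.
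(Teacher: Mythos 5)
The paper's ``proof'' of this theorem is a bare citation to Sp\"ath, \cite[Theorem 2.16]{LocalRep} (the butterfly theorem for $\geq_c$) together with \cite[Theorem 4.6]{LocalRep} (which handles the block-theoretic refinement $\geq_b$). So strictly speaking there is no proof here to compare with; I can only compare your attempt against the known literature argument.

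Your construction of the fiber product $\tilde{G}$, the pullback of the witnessing projective representations along $\pi_1$, the identification $M = N\cap H_1 = N\cap H_2$, the scalar correction to descend through $\pi_2$, and the verification of the defect-group condition via $\varepsilon_1(\C_{G_1}(D)) = \varepsilon_2(\C_{G_2}(D))$ are all the standard butterfly argument, and as far as the $\geq_c$-part and Definition~\ref{blockisomorphism}(i) are concerned your sketch is essentially right. (One small inaccuracy: the scalars $\mathcal{P}(c)$ on $c\in\C_{G_1}(N)$ form a projective, not a linear, character, so ``a single $\eta$'' is an honest construction, not a free lunch; but this is routine.)

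The block-preservation step (Definition~\ref{blockisomorphism}(ii)) is where I think there is a genuine gap, and you yourself flag it as ``the delicate point'' but then wave it away too quickly. Two concrete problems with your argument. First, you repeatedly speak of ``the central kernels $\ker(\pi_i)$,'' but $\ker(\pi_2)=\C_{G_1}(N)\times\{1\}$ is normal in $\tilde{G}$ and centralizes the diagonal copy of $N$, yet it is central in $\tilde{G}$ only if $\C_{G_1}(N)\leq\mathrm{Z}(G_1)$, which is not part of the hypotheses; the clean ``inflation/deflation of blocks along a central quotient'' machinery therefore does not directly apply, and what you really have is domination of blocks along a normal (not central) kernel. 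Second, and more seriously, you claim that $\sigma_{\tilde{J}}$ ``corresponds \dots both to $\sigma_J$ on the $G_2$-side and to the $\sigma$-bijection for $\pi_1(\tilde{J})$ on the $G_1$-side.'' But for $\tilde{J} := \pi_2^{-1}(J)$ and $\psi\in\Irr(J\mid\theta)$, the inflated character $\tilde{\psi}$ has $\ker(\pi_2)$ in its kernel, yet in general it does \emph{not} have $\ker(\pi_1)\cap\tilde{J}=\{1\}\times(\C_{G_2}(N)\cap J)$ in its kernel; hence $\tilde{\psi}$ does not factor through $\pi_1|_{\tilde{J}}$, it is not the inflation of any character of $\pi_1(\tilde{J})$, and you cannot invoke the $G_1$-side $\geq_b$-hypothesis at $\pi_1(\tilde{J})$ for it. This gap is not merely expository: the argument as written only applies to those $J$ containing $\C_{G_2}(N)$. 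Closing it requires either an explicit comparison of central characters using the concrete formula for the $\sigma_J$-bijection, or a separate lemma reducing the block condition on arbitrary $J$ to the case $\C_{G_2}(N)\leq J$ (via Clifford/Fong--Reynolds arguments along the normal subgroup $\C_{G_2}(N)\cap J$), and neither of these is supplied or even signalled.
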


\begin{proof}
	See \cite[Theorem 2.16]{LocalRep} and \cite[Theorem 4.6]{LocalRep}.
\end{proof}

The inductive Alperin--McKay condition was introduced by Späth in \cite[Definition 7.12]{IAM}. We will in the following use its reformulation in the language of character triples, see \cite[Definition 4.12]{LocalRep}.

\begin{definition}
	Let $S$ be a non-abelian simple group with universal covering group $G$ and $b$ a block of $G$ with non-central defect group $D$. If there exists a strong iAM-bijection $\Psi: \Irr_0(G , b) \to \Irr_0(M , B)$ for the block $b$ with respect to a subgroup $M$ then we say that the \textit{block $b$ is AM-good for $\ell$}.
	
\end{definition}

\subsection{Normal subgroups and character triple bijections}

In this section we recall two general statements from the theory of character triples which we will use in the next sections.

The following proposition is a variant of \cite[Proposition 4.7(b)]{JEMS}:

\begin{proposition}\label{JEMSproposition}
	Let $X$ be a finite group. Suppose that $N \lhd X$ and $H \leq X$ such that $X=NH$. Let $e$ be a block of $N$ with defect group $D_0$. Assume that $M:=N \cap H$ satisfies $H=M \mathrm{N}_X(D_0)$ and let $E \in \mathrm{Bl}(M \mid D_0)$ such that $E^N=e$. Suppose there exists an $\mathrm{N}_X(D_0,E)$-equivariant bijection
	$$\varphi: \Irr_0(N , e) \to \Irr_0( M , E)$$
	such that $(X_\theta,N,\theta) \geq_{b} (H_\theta,M, \varphi(\theta))$.
	Furthermore let $J \lhd X$ such that $N \leq J$. Let $c$ be a block of $J$ covering $e$ and $D$ a defect group of $c$ satisfying $D \cap N=D_0$ and let $C \in \mathrm{Bl}(J \cap H \mid D)$ with $C^J=c$. Then there exists an $\mathrm{N}_H(D,C)$-equivariant bijection
	$$\pi_J: \Irr_0(J , c) \to \Irr_0( J \cap H , C)$$
	such that $(X_\tau,J, \tau) \geq_b (H_\tau, J \cap H, \pi_J(\tau))$ for all $\tau \in  \Irr_0(J , c)$.
\end{proposition}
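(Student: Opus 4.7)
My plan is to construct $\pi_J$ by combining the Fong--Reynolds correspondence for $c$ and $C$ with the transfer bijections $\sigma_{J_\theta}$ from the $N$-level hypothesis, and then to lift the $\geq_b$ ordering through the Clifford correspondence. Throughout set $K := J \cap H$, and for $\theta \in \Irr_0(N, e)$ write $\theta' := \varphi(\theta)$.

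I would first set up Fong--Reynolds on both sides. On the $N$-side, since $c$ covers $e = \mathrm{bl}(\theta)$, there is a unique block $c_\theta$ of $J_\theta$ with $c_\theta^J = c$ and induction gives a height-preserving bijection $\Irr_0(J_\theta, c_\theta \mid \theta) \leftrightarrow \Irr_0(J, c \mid \theta)$. For the $H$-side I first verify that $C$ covers $E$, which follows from transitivity of Brauer induction applied to $C^J = c$ and $E^N = e$, combined with the uniqueness clause in Fong--Reynolds. A parallel statement then produces a unique block $C_{\theta'}$ of $K_{\theta'}$ with $C_{\theta'}^K = C$ together with a bijection $\Irr_0(K_{\theta'}, C_{\theta'} \mid \theta') \leftrightarrow \Irr_0(K, C \mid \theta')$. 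Along the way I identify $J_\theta \cap H_\theta = K_{\theta'}$, using the $\mathrm{N}_X(D_0, E)$-equivariance of $\varphi$ together with the fact that the relevant conjugating elements of $K$ lie in $\mathrm{N}_X(D_0, E)$.

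Applying Theorem~\ref{sigmabijections} to $(X_\theta, N, \theta) \geq_b (H_\theta, M, \theta')$ with intermediate subgroup $J_\theta$ yields a bijection $\sigma_{J_\theta} : \Irr(J_\theta \mid \theta) \to \Irr(K_{\theta'} \mid \theta')$ satisfying $\mathrm{bl}(\sigma_{J_\theta}(\psi))^{J_\theta} = \mathrm{bl}(\psi)$. Combined with transitivity of Brauer induction and Fong--Reynolds uniqueness, this forces $\sigma_{J_\theta}$ to restrict to a bijection $\Irr_0(J_\theta, c_\theta \mid \theta) \to \Irr_0(K_{\theta'}, C_{\theta'} \mid \theta')$. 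I then set
\[
\pi_J\bigl( \Ind_{J_\theta}^J(\psi) \bigr) := \Ind_{K_{\theta'}}^K \bigl( \sigma_{J_\theta}(\psi) \bigr),
\]
with $\theta$ ranging over representatives of the $J$-orbits on $\Irr_0(N, e)$. Independence from the choice of orbit representative, bijectivity, and $\mathrm{N}_H(D, C)$-equivariance all follow from Lemma~\ref{propertiesofc}(c) together with the $\mathrm{N}_X(D_0, E)$-equivariance of $\varphi$, using that $\mathrm{N}_H(D, C) \subseteq \mathrm{N}_X(D_0, E)$ (which comes from $D \cap N = D_0$ and from $C$ covering $E$).

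The main obstacle is the ordering $(X_\tau, J, \tau) \geq_b (H_\tau, K, \pi_J(\tau))$ for every $\tau \in \Irr_0(J, c)$. The plan is to build witnessing projective representations at the $J$-level by inflating those for $(X_\theta, N, \theta) \geq_b (H_\theta, M, \theta')$ through the Clifford correspondence $\psi \mapsto \Ind_{J_\theta}^J(\psi)$, and analogously on the $H$-side. One then has to track simultaneously the factor-set condition, the scalar-on-centralizer-elements condition ($\geq_c$), and the Brauer-induction identity at each intermediate subgroup $J \leq L \leq X_\tau$. The scalar compatibility is essentially inherited from the $\theta$-level through the Clifford correspondence, but the block-theoretic identity at a general $L$ calls for another transitivity-and-uniqueness argument applied to blocks of $L \cap J_\theta$ and $L \cap K_{\theta'}$. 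I expect this step to follow the template of \cite[Proposition~4.7(b)]{JEMS}, suitably adapted to preserve the strengthening from $\geq$ to $\geq_b$.
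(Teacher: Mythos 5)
Your proposal reconstructs, in more detail than the paper supplies, exactly the argument behind the reference that the paper's one-line proof simply cites: Proposition 4.7(b) of Navarro--Sp\"ath. Decomposing via Fong--Reynolds/Clifford over orbit representatives $\theta$, matching stabilizers via the $\mathrm{N}_X(D_0,E)$-equivariance together with defect-group conjugacy, transporting height-zero blocks through $\sigma_{J_\theta}$ from Theorem \ref{sigmabijections}, and deferring the $\geq_b$ verification at the $J$-level to the JEMS template is precisely the route the paper intends; the only thing you leave unaddressed is the paper's explicit remark that the hypotheses used here are weaker than those in [JEMS, Prop.\ 4.7(b)] yet the same proof goes through unchanged.
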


\begin{proof}
	This is proved as in \cite[Proposition 4.7(b)]{JEMS}. We note that the assumptions in in \cite[Proposition 4.7(b)]{JEMS} are stronger. However, one can with our weaker assumption and with the same proof show that the conclusion of the proposition holds.
	%
\end{proof}


Let $G$ be a finite group. As in \cite{JEMS} the set $\Irr_0(G \mid D)$ denotes the set of height zero characters in $\ell$-blocks of $G$ with defect group $D$. We use the following statement, which is a consequence of the Dade--Nagao--Glauberman correspondence.

%

\begin{lemma}\label{DGN}
	Let $X$ be a finite group, $M \lhd X$ and $N \lhd M$ such that $M/N$ is an $\ell$-group. Let $D_0 \lhd N$ be an $\ell$-subgroup with $D_0 \leq \mathrm{Z}(M)$.
	
	
	 Then for every $\ell$-subgroup $D$ of $M$ there exists an $\mathrm{N}_X(D)$-equivariant bijection
	$$\Pi_D: \Irr_0(M \mid D) \to \Irr_0(\mathrm{N}_M(D) \mid D)$$
	with $(X_\tau, M, \tau) \geq_b (\mathrm{N}_X(D)_\tau, \mathrm{N}_M(D), \Pi_D( \tau ))$ for every $\tau \in \Irr_0(M  \mid D)$.
\end{lemma}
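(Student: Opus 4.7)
The plan is to invoke the Dade--Nagao--Glauberman correspondence, refined to the level of character triples and the relation $\geq_b$, as developed by Späth and Navarro (compare \cite{JEMS} and \cite{LocalRep}). The lemma is essentially a direct application of this correspondence to the situation at hand, so the work reduces to verifying that the hypotheses of the refined DGN correspondence are met and to assembling the blockwise bijections into a single $\mathrm{N}_X(D)$-equivariant map.

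First, I would proceed blockwise. For each block $b$ of $M$ with defect group $D$, Brauer's first main theorem supplies a unique Brauer correspondent $B \in \mathrm{Bl}(\mathrm{N}_M(D) \mid D)$ with $B^M = b$, and the assignment $b \mapsto B$ is manifestly $\mathrm{N}_X(D)$-equivariant. Within each pair $(b,B)$, the DGN correspondence provides an $\mathrm{N}_X(D)_b$-equivariant bijection $\Irr_0(M, b) \to \Irr_0(\mathrm{N}_M(D), B)$ on height zero characters. The hypothesis that $M/N$ is an $\ell$-group enters here: it allows one to parametrize characters of $M$ via Clifford theory over $N$ combined with a Glauberman-type argument on the $\ell$-factor group $M/N$, so that the passage to $\mathrm{N}_M(D)$ is controlled by the local structure. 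Aggregating over all blocks with defect group $D$ yields the bijection $\Pi_D$, with equivariance inherited from the equivariance of the block correspondence and of each blockwise piece.

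Second, the character triple relation $\geq_b$ is established by constructing, for each $\tau \in \Irr_0(M \mid D)$, compatible projective representations $\mathcal{P}$ and $\mathcal{P}'$ of $X_\tau$ and $\mathrm{N}_X(D)_\tau$ associated to $\tau$ and $\Pi_D(\tau)$, whose factor sets agree on the smaller group, whose scalars on $\C_{X_\tau}(M)$ coincide, and whose induced bijections $\sigma_J$ satisfy $\bl(\psi) = \bl(\sigma_J(\psi))^J$ for every intermediate subgroup $M \leq J \leq X_\tau$. The hypothesis $D_0 \leq \Z(M)$ is used here to ensure that the central $\ell$-character attached to $D_0$ is preserved by $\Pi_D$, which is the key point for the $\geq_c$-refinement built into $\geq_b$.

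The main obstacle is precisely the block-equality portion of $\geq_b$: verifying $\bl(\psi) = \bl(\sigma_J(\psi))^J$ for arbitrary intermediate $J$. This requires iterating Brauer correspondence along the chain of subgroups from $\mathrm{N}_M(D)$ up to $J$ and matching it against the $\sigma_J$ coming from the projective representations. I would handle this by appealing to a ready-made packaging of the DGN correspondence as a character triple statement (such as \cite[Theorem 4.3]{JEMS} or its variants in \cite{LocalRep}), whose proof already combines the $\ell$-group hypothesis on $M/N$ with the centrality of $D_0$ to produce the required block compatibility.
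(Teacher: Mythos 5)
Your proposal takes essentially the same route as the paper: the paper's proof is a one-line citation to \cite[Corollary 5.14]{JEMS}, which is exactly the character-triple packaging of the Dade--Glauberman--Nagao correspondence that you describe and gesture toward at the end. Your sketch of what goes on inside that result (blockwise reduction via Brauer's first main theorem, use of the $\ell$-group hypothesis on $M/N$ for the Glauberman-type argument, use of $D_0\leq\mathrm{Z}(M)$ to control central characters and the $\geq_c$-refinement) is a reasonable account of the content of the cited statement, though the precise reference is Corollary 5.14 rather than Theorem 4.3 of \cite{JEMS}.
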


\begin{proof}
	This is \cite[Corollary 5.14]{JEMS}.
\end{proof}

\subsection{A criterion for block isomorphic character triples}

In this section we establish a slightly more general version of \cite[Lemma 3.2]{CS14}. This will be required to obtain Lemma \ref{12} below in the case where $\G^F$ is of type $D_4$. Thus, the reader who is not interested in the specifics of this case may skip this section entirely.

Let $A$ be a group acting on a finite group $H$. Then we denote by $\mathrm{Lin}_A(H)$ the subset of $A$-invariant linear characters in $\Irr(H)$. Moreover, if $N$ is a normal subgroup of $A$ and $b$ is a block of $N$, then we denote by $A[b]$ \textit{the ramification group of the block} $b$ in $A$, which was introduced by Dade, see \cite[Definition 3.1]{CS14}. 

\begin{lemma} \label{newred}
	Let $A$ be a finite group. Suppose that $N\unlhd A$ and $N\leq J\unlhd A$ such that $J/N$ is abelian and $A/N$ is solvable. Assume that $\mathrm{Lin}_A(H)=\{1_H\}$ for every subgroup $H$ of the quotient group $([A,A] J) / J$. Let $b$ be a block of $N$ and suppose that Hall $\ell'$-subgroup $I/N$ of $A[b]/N$ satisfies $A=\mathrm{N}_A(I) J$. We let $\chi, \phi \in \Irr(N,b)$ and $\tilde{\chi}\in \Irr(A)$ and $\tilde{\phi}\in\Irr(A[b])$ be extensions of $\chi$ and $\phi$ respectively with $\bl (\Res^A_{J_1}(\tilde{\chi})\bigr) =\bl(\Res^{A[b]}_{J_1}(\tilde{\phi}))$ for every $J_1$ with $N\leq J_1 \leq J[b]$. Assume moreover that $\mathrm{bl}(\Res_{I}^{A[b]}(\tilde \phi))$ is $\mathrm{N}_A(I)$-stable. Then there exists an extension $\tilde{\chi}_1 \in \Irr(A)$ of $\Res^A_{J}(\tilde{\chi})$ with 
	$$\bl(\Res^A_{J_2}(\tilde{\chi}_1)\bigr)=\bl\bigl(\Res^{A[b]}_{J_2}(\tilde{\phi}))$$
	for every $J_2$ with $N\leq J_2\leq A[b]$.
\end{lemma}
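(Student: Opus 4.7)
The plan is to realize $\tilde\chi_1$ as a twist $\tilde\chi\cdot\hat\lambda$, where $\hat\lambda$ is the inflation to $A$ of a linear character $\lambda\in\Irr(A/J)$ still to be chosen. Since $\lambda$ is trivial on $J$, and since $J[b]\leq J$, this twist automatically preserves the block equalities assumed on the range $[N,J[b]]$. Hence the problem reduces to choosing $\lambda$ so that $\bl(\Res^A_{J_2}(\tilde\chi\cdot\hat\lambda))=\bl(\Res^{A[b]}_{J_2}(\tilde\phi))$ holds for every $J_2$ with $J[b]\leq J_2\leq A[b]$.

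Following the strategy of \cite[Lemma 3.2]{CS14}, the next step is to exploit Dade's theory of the ramification group: since $I/N$ is a Hall $\ell'$-subgroup of $A[b]/N$, we have $A[b]=I\cdot J[b]$ with $I\cap J[b]=N$, so blocks of a subgroup $J_2\in[J[b],A[b]]$ covering the already-fixed block on $J[b]$ are classified by their restriction to $J_2\cap I$. Combined with the fact that block equality is achieved on all of $[N,J[b]]$ by hypothesis, this reduces the whole problem to arranging a single identity $\bl(\Res^A_I(\tilde\chi\cdot\hat\lambda))=\bl(\Res^{A[b]}_I(\tilde\phi))$ of blocks of $I$.

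At the level of $I$, the two blocks $\bl(\Res^A_I(\tilde\chi))$ and $\bl(\Res^{A[b]}_I(\tilde\phi))$ differ by multiplication with some linear character $\mu\in\Irr(I/N)$, and the task becomes to realise $\mu$ as the restriction to $I$ of some inflated character coming from $A/J$. Here the hypothesis that $\bl(\Res^{A[b]}_I(\tilde\phi))$ is $\mathrm{N}_A(I)$-stable, together with the decomposition $A=\mathrm{N}_A(I)J$, forces $\mu$ to be $\mathrm{N}_A(I)$-invariant, hence $A$-invariant modulo $J$. Using the solvability of $A/N$ and thus of $A/J$, one proceeds by induction along a chief series of $A/J$, extending $\mu$ from the subquotient $IJ/J$ to successively larger quotients; at each step the obstruction to extending across an abelian chief factor lies in a group of the form $\mathrm{Lin}_A(H)$ for some subgroup $H\leq([A,A]J)/J$, and vanishes by the standing hypothesis. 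The resulting $\lambda\in\Irr(A/J)$ furnishes the desired twist.

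The main obstacle, and the reason for the technical hypothesis on $\mathrm{Lin}_A(H)$, is to show that a \emph{single} twist $\lambda$ corrects the blocks simultaneously for every intermediate $J_2$, not merely for $I$ and $J[b]$ themselves. The propagation from these two ends to general $J_2\in[N,A[b]]$ relies on the uniqueness of the Brauer correspondence across the coprime extension $A[b]/J[b]$, so that once block equality is arranged on both ends of the factorisation $A[b]=I\cdot J[b]$, it is automatic on every subgroup between them. The solvability of $A/N$ and the vanishing of $\mathrm{Lin}_A(H)$ are what make this last step work; the rest is bookkeeping of Brauer induction and Clifford correspondence.
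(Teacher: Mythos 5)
Your high-level plan (twist $\tilde\chi$ by a linear character inflated from $A/J$, force a block match at $I$, use the $\mathrm{N}_A(I)$-stability to make the twisting character $A$-invariant, then use the $\mathrm{Lin}_A(H)$ hypothesis to extend it) does match the spirit of the paper's argument, which is itself a modification of \cite[Lemma~3.2]{CS14}. However there are several concrete gaps.

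First, the claimed factorisation $A[b]=I\cdot J[b]$ with $I\cap J[b]=N$ is not valid. By the choice made in the proof (following \cite[Theorem~18.5]{Aschbacher}), $(I\cap J)/N$ is a \emph{nontrivial} Hall $\ell'$-subgroup of $J[b]/N$, so $I\cap J[b]\neq N$ in general; and $|IJ[b]/N|=|A[b]/N|_{\ell'}\,|J[b]/N|_\ell$, which equals $|A[b]/N|$ only when $A[b]/J[b]$ is an $\ell'$-group, something that need not hold since $A[b]/J[b]$ is just some section of $A/J$. Consequently the ``propagation from the two ends of the factorisation'' step does not go through. The paper's propagation mechanism is different and is not automatic: it first obtains block equality on the cyclic sections $\langle N,x\rangle$ for $\ell'$-elements $x\in I$ via \cite[Lemma~2.4]{KS}, then uses conjugacy of $\ell'$-elements into $I$ to get the same on all of $A[b]$, and finally invokes \cite[Lemma~2.5]{KS} to pass from cyclic sections to arbitrary intermediate subgroups $J_2$.

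Second, the assertion that $\bl(\Res^A_I(\tilde\chi))$ and $\bl(\Res^{A[b]}_I(\tilde\phi))$ differ by multiplication with some $\mu\in\Irr(I/N)$ that moreover comes from $IJ/J$ is not justified as stated; note $\tilde\chi$ and $\tilde\phi$ extend possibly different characters $\chi,\phi\in\Irr(N,b)$, so the two restrictions to $I$ are not both extensions of a common character. The paper instead constructs, via \cite[Theorem~C(b)(1)]{KS}, an extension $\tilde\chi_2\in\Irr(I)$ of $\chi$ lying in the target block, then shows $\Res^A_{I\cap J}(\tilde\chi)=\Res^I_{I\cap J}(\tilde\chi_2)$ via the uniqueness statement \cite[Lemma~3.7]{KS}, glues $\tilde\chi_2$ with $\Res^A_J(\tilde\chi)$ to a character $\eta$ of $IJ$ using \cite[Lemma~5.8(a)]{Equivariantcharacter}, and only then obtains $\mu\in\Irr(IJ/J)$ by Gallagher. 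The need for $\mu$ to be trivial on $I\cap J$ is what all of these steps achieve, and it is not a formality.

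Third, the extension argument ``induct along a chief series of $A/J$, the obstruction at each abelian chief factor lies in $\mathrm{Lin}_A(H)$'' misdescribes how the hypothesis is used. The hypothesis $\mathrm{Lin}_A(H)=\{1_H\}$ for subgroups $H\leq([A,A]J)/J$ is applied once, to conclude that the $A$-invariant linear character $\mu$ is trivial on $[A,A]J\cap IJ$; this lets $\mu$ factor through $IJ/([A,A]J\cap IJ)\hookrightarrow A/[A,A]$, whence it extends to a linear character $\tilde\mu$ of $A$ directly, without any stepwise obstruction analysis. You would need to make the chief-series obstruction claim precise (the obstruction to extending a character across an abelian chief factor is a cohomology class, not a priori an element of some $\mathrm{Lin}_A(H)$), and it is simpler to follow the paper's one-shot quotient argument.
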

\begin{proof} 
	We copy the first part of the proof of \cite[Lemma 3.2]{CS14}. Since $A/N$ is solvable, there exists some group $I$ with $N\leq I\leq A[b]$ such that $I/N$ is a Hall $\ell'$-subgroup of $A[b]/N$ and $(I\cap J )/N$ is a Hall $\ell'$-subgroup of $J[b]/N$, see \cite[Theorem 18.5]{Aschbacher}. 
	According to \cite[Theorem C(b)(1)]{KS} there exists an extension $\tilde{\chi}_2\in\Irr(I)$ of $\chi$ to $I$ with $\bl(\tilde{\chi}_2)= \bl(\Res^{A[b]}_{I}(\tilde{\phi}))$.
	This extension also satisfies $\bl(\Res^{I}_{I\cap J}(\tilde{\chi}_2))=\bl(\Res^{A[b]}_{I\cap J}(\tilde{\phi}))$ according to \cite[Lemma 2.4]{KS} and \cite[Lemma 2.5]{KS}.
	By \cite[Lemma 3.7]{KS} there is a unique character in $\Irr(I\cap J\mid \chi)$ with this property, hence $\Res^{A}_{I\cap J}(\tilde{\chi})=\Res^{A[b]}_{I\cap J}(\tilde{\chi}_2)$ by the assumptions on $\tilde{\chi}$. By \cite[Lemma 5.8(a)]{Equivariantcharacter} there exists an extension $\eta\in\Irr(IJ)$ of $\Res^{A}_{J}(\tilde{\chi})$ such that $\Res^{IJ}_I (\eta )=\tilde{\chi}_2$.
	
	Since $\eta$ and $\mathrm{Res}_{IJ}^{A}( \tilde{\chi})$ are both extensions of the character $\Res^{A}_{J}(\tilde{\chi})$ there exists by Gallagher's theorem a unique linear character $\mu \in \Irr(IJ/ J)$ such that $\eta = \mathrm{Res}_{IJ}^{A}( \tilde{\chi})  \mu$. We claim that $\mu$ is $A$-stable.

	We obtain $\tilde{\chi}_2=\mathrm{Res}_{I}^{IJ}(\eta) = \mathrm{Res}_{I}^{A}( \tilde{\chi})  \mathrm{Res}_{I}^{IJ}(\mu)$. Recall that $\bl(\tilde{\chi}_2)= \bl(\Res^{A[b]}_{I}(\tilde{\phi}))$ is $\mathrm{N}_A(I)$-stable since $\bl(\Res^{A[b]}_{I}(\tilde{\phi}))$ is assumed to be $\mathrm{N}_A(I)$-stable. Since $\tilde{\chi}_2$ is the unique extension of $\chi$ lying in this block by \cite[Theorem 4.1]{Murai} it must be $\mathrm{N}_A(I)$-stable as well. From this it follows that $\mathrm{Res}_{I}^{IJ}(\mu)$ and therefore $\mu$ is $\mathrm{N}_A(I)$-stable. Since $IJ/J \cong I/I \cap J$ it follows that $\mu$ is $J \mathrm{N}_A(I)$-stable. Since $A=J \mathrm{N}_A(I)$ by assumption it follows that $\mu$ is $A$-stable as claimed.
		
	 Now, $([A,A] J \cap IJ) / J \leq ([A,A] J) / J$. By assumption every $A$-invariant linear character of a subgroup of $([A,A] J) /J$ is trivial. Hence we obtain $\mathrm{Res}^{IJ}_{[A,A] J \cap IJ}(\mu)=1_{[A,A] J \cap IJ}$. In other words, $[A,A] J \cap IJ$ is contained in the kernel of the linear character $\mu$. Hence we can consider $\mu$ as a character of $IJ/([A,A]J \cap IJ)$. Since $IJ/([A,A]J \cap IJ) \hookrightarrow A /[A,A]$ there exists a linear character $\tilde{\mu} \in \Irr(A)$ extending $\mu$. We define $\tilde{\chi}_1:= \tilde{\mu} \tilde{\chi}$ and obtain
	$$\mathrm{bl}(\Res_{I}^A (\tilde{\chi}_1 ))=\mathrm{bl}(\mathrm{Res}_{I}^{A}( \tilde{\chi})  \Res_I^{IJ}(\mu))=\mathrm{bl}(\tilde{\chi}_2)=\mathrm{bl}(\Res_{I}^{A[b]}(\tilde \phi)) .$$
	After having constructed the extension $\tilde{\chi}_1$ the same arguments from \cite[Lemma 3.2]{CS14} show the result. For the convenience of the reader we will recall the arguments here.
	
	According to \cite[Lemma 2.4]{KS} (which also holds for ordinary characters instead of Brauer characters) the character $\tilde{\chi}_1$ satisfies
	\[ \bl (\Res_{\langle N,x \rangle }^A( \tilde{\chi}_1) )=\bl\bigl(\Res^{A[b]}_{\langle N,x \rangle }( \tilde{\phi}) ) \text{ for every }x \in I \text{ of $\ell'$-order}.\]
	Since $I/N$ is a Hall $\ell'$-subgroup of $A[b]/N$ it follows that every element $x\in A[b]$ of $\ell'$-order is conjugate to some element in $I$. Consequently, the above equality holds for every element $x\in A[b]$ of $\ell'$-order. By \cite[Lemma 2.5]{KS} this implies
	\begin{align*}
	\bl\bigl(\Res^A_{J_2} (\tilde{\chi}_1))&=\bl(\Res^{A[b]}_{J_2}( \tilde{ \phi} )) \text{ for every group $J_2$ with }N\leq J_2\leq A[b].
	\qedhere
	\end{align*}
\end{proof}
%
Note that if $A/J$ is abelian the assumption that $\Irr_A(H)=\{1_H\}$ for every subgroup $H$ of the quotient group $([A,A] J) / J$ is trivially satisfied. Hence, we obtain as a special case the original statement of \cite[Lemma 3.2]{CS14}.

\begin{remark}\label{newred3}
In this remark we consider the situation when $A/N =S_a \times C_m$ for some $m$ and $a \in \{ 3,4\}$. Moreover, let $J=N$ if $a=3$ and $J=V_4$ if $a=4$. Firstly, observe that $([A,A]J)/J=[A/J, A/J] \cong C_3$. Since no non-trivial character of $C_3$ is fixed by $S_3$ we thus have $\mathrm{Lin}_A(H)=\{1_H\}$ for every subgroup $H$ of $[A,A] J / J$. A calculation inside $S_4 \times C_m$ shows that the Hall $\ell'$-subgroup $I$ of $A[b]/N$ satisfies $\mathrm{N}_A(I)J=A$ whenever $\ell \neq 3$. Thus, the group theoretic requirements of Lemma \ref{newred} are satisfied unless possibly if $\ell=3$.
  
We check that the conclusion in Lemma \ref{newred} nevertheless still holds for $\ell=3$ in this situation. For this observe that we only needed the assumption on $I$ to construct the character $\tilde{\chi}_1$. Recall that $\eta\in\Irr(IJ)$ is an extension of both $\Res^{A}_{J}(\tilde{\chi})$ and $\tilde{\chi}_2$. Let $P/J$ be the normal Sylow $\ell$-subgroup of $A/J$. By \cite[Lemma 5.8(a)]{Equivariantcharacter} there exists an extension $\eta' \in \Irr(IJP)$ extending both $\eta$ and $\Res^{A}_{JP}(\tilde{\chi})$. By the proof of Lemma \ref{newred} the character $\eta$ is $\mathrm{N}_{A}(I)J$-stable and $ \Res^{A}_{JP}(\tilde{\chi})$ is clearly $A$-stable. Consequently, $\eta'$ is $P\mathrm{N}_{A}(I)J$-invariant. By observing the group structure of $A/J \cong S_3 \times C_m$ we can now deduce that $A=P \mathrm{N}_A(I)J$ and thus $\eta'$ is $A$-stable. Note that $A/JP$ is abelian and $\eta'$ is an extension of $\Res^{A}_{JP}(\tilde{\chi})$. Thus, $\eta'$ extends to a character $\tilde{\eta}$ of $A$. We deduce that
	$$\mathrm{bl}(\Res_{I}^A(\tilde{\eta}))=\mathrm{bl}(\tilde{\chi}_2)=\mathrm{bl}(\Res_{I}^{A[b]}(\tilde \phi)) $$
and thus $\tilde \eta$ is a suitable extension.
\end{remark}

\section{The first reduction theorem}

\subsection{A condition on the stabilizer and the inductive conditions}


From now on $\G$ denotes a simple algebraic group of simply connected type with Frobenius $F: \G \to \G$ and such that the finite group $\G^F/ \mathrm{Z}( \G^F)$ is non-abelian and simple with universal covering group $\G^F$. We let $\iota: \G \hookrightarrow \tilde{\G}$ be a regular embedding as in \cite[4.1]{Jordan}. For every closed $F$-stable subgroup $\mathbf{H}$ of $\tilde{\G}$ we write $H:= \mathbf{H}^F$. In the following we denote by $\mathcal{B}:= \langle \mathcal{G}, \phi_0 \rangle \subseteq \mathrm{Aut}(\tilde{\G}^F)$ the subgroup generated by the group $\mathcal{G} \subseteq \mathrm{Aut}(\tilde{\G}^F)$ of graph automorphisms and the field automorphism $\phi_0: \tilde{\G}^F \to \tilde{\G}^F$ from \cite[4.1]{Jordan}. By the description of automorphisms of simple simply connected algebraic groups in \cite[Theorem 2.5.1]{GLS} it follows that $\mathrm{C}_{\tilde{G}\mathcal{B}}(G)= \mathrm{Z}(\tilde{G})$.


Let $\G^\ast$ be a connected reductive group in duality with $\G$ and $F^\ast: \G^\ast \to \G^\ast$ a Frobenius endomorphism dual to $F$.
We fix a semisimple element $s \in (\G^\ast)^{F^\ast}$ of $\ell'$-order and a block $b$ of $\mathcal{O} \G^F e_s^{\G^F}$ with defect group $D$. We let $Q$ be a characteristic subgroup of $D$. In the following we abbreviate $M:= \mathrm{N}_G(Q)$ and $\tilde{M}:= \mathrm{N}_{\tilde{G}}(Q)$. Moreover, $B_Q \in \mathrm{Bl}(M \mid Q)$ denotes the unique block with $(B_Q)^M=b$.

The following theorem is essentially due to Cabanes--Späth \cite{CS14}. In previous work this theorem has turned out to be useful in the verification of the inductive Alperin--McKay condition for simple groups of Lie type.
%
%
%

\begin{theorem}\label{12}
	Let $ \chi \in \Irr(G,b)$ and $\chi' \in \Irr(M,B_Q)$ such that the following holds:
	\begin{enumerate}[label=(\roman*)]
		\item We have $(\tilde{G} \mathcal{B})_\chi = \tilde{G}_\chi \mathcal{B}_\chi$ and 
		$\chi$ extends to $(G \mathcal{B})_\chi$.
		\item
		We have $( \tilde{M}  \mathrm{N}_{G \mathcal{B}}(Q) )_{\chi'}= \tilde{M}_{\chi'} \mathrm{N}_{G \mathcal{B}}(Q)_{\chi'}$ and $\chi'$ extends to $ \mathrm{N}_{G \mathcal{B}}(Q)_{\chi'}$.
		\item $(\tilde{G} \mathcal{B})_\chi = G ( \tilde{M}  \mathrm{N}_{G \mathcal{B}}(Q) )_{\chi'}$.
		\item There exists $\tilde{\chi} \in \Irr(\tilde{G}_\chi \mid \chi)$ and $\tilde{\chi}' \in \Irr(\tilde{M}_{\chi'} \mid \chi')$ such that the following holds:
		\begin{itemize}

			\item  For all $m \in \mathrm{N}_{G \mathcal{B}}(Q)_{\chi'}$ there exists $\nu \in \Irr(\tilde{G}_{\chi} /G)$	with ${\tilde{\chi}}^m= \nu \tilde{\chi}$ and $\tilde{\chi}'^m=\mathrm{Res}^{\tilde{G}_{\chi}}_{\tilde{M}_{\chi'}}(\nu) \tilde{\chi}'$.
			\item The characters $\tilde{\chi}$ and $\tilde{\chi}'$ cover the same underlying central character of $\mathrm{Z}(\tilde{G})$.
		\end{itemize}
		\item For all $G \leq J \leq \tilde{G}_{\chi}$ we have $\mathrm{bl}(\mathrm{Res}^{{\tilde{G}_\chi}}_{{J}}({\tilde{\chi}}))= \mathrm{bl}(\mathrm{Res}^{\mathrm{N}_{\tilde{G}}(Q)_{\chi'}}_{\mathrm{N}_{{J}}(Q)}({\tilde{\chi}'}))^{{J}}$.
	\end{enumerate}
	Let $\mathrm{Z}:=  \mathrm{Ker}(\chi) \cap \mathrm{Z}(G) $.
	Then
	$$(( \tilde{G} \mathcal{B})_\chi /Z, G/Z, \overline{\chi}) \geq_b ((\tilde{M} \mathrm{N}_{G \mathcal{B}}(Q))_{\chi'} / Z,M/Z, \overline{\chi'}),$$
	where $\overline{\chi}$ and $\overline{\chi'}$ are the characters which inflate to $\chi$, respectively $\chi'$.
\end{theorem}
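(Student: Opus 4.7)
The plan is to first establish $\geq_b$ between the un-deflated triples $((\tilde{G}\mathcal{B})_\chi, G, \chi)$ and $((\tilde{M}\mathrm{N}_{G \mathcal{B}}(Q))_{\chi'}, M, \chi')$ and then descend to the quotient by $\mathrm{Z} = \mathrm{Ker}(\chi) \cap \mathrm{Z}(G)$. The overall argument follows the template of \cite[Theorem 4.1]{CS14}; the genuinely new ingredient is Lemma \ref{newred} (together with Remark \ref{newred3} in type $D_4$), needed because $\mathcal{B}$ is no longer forced to be abelian in full generality.

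I would start by fixing a matrix representation $D_\chi$ affording $\chi$, extending it to a representation $D_{\tilde\chi}$ of $\tilde{G}_\chi$ with character $\tilde\chi$ (from (iv)) and to a representation $D_{\chi_0}$ of $(G\mathcal{B})_\chi$ affording some extension $\chi_0$ of $\chi$ (from (i)). Using the decomposition $(\tilde{G}\mathcal{B})_\chi = \tilde{G}_\chi \mathcal{B}_\chi$, paste $D_{\tilde\chi}$ and $D_{\chi_0}$ into a projective representation $\mathcal{P}$ associated with $((\tilde{G}\mathcal{B})_\chi, G, \chi)$. Perform the analogous construction on the normalizer side using (ii) and $\tilde\chi'$ to obtain $\mathcal{P}'$ associated with $((\tilde{M}\mathrm{N}_{G \mathcal{B}}(Q))_{\chi'}, M, \chi')$. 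The conjugation relations $\tilde\chi^m = \nu \tilde\chi$ and $(\tilde\chi')^m = \mathrm{Res}(\nu)\tilde\chi'$ from the first bullet of (iv) are exactly what force the factor sets $\alpha, \alpha'$ of $\mathcal{P},\mathcal{P}'$ to agree on $(\tilde{M}\mathrm{N}_{G \mathcal{B}}(Q))_{\chi'}^2$, giving $\geq$.

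The remaining group-theoretic axioms are direct: the product decomposition is (iii); the intersection with $G$ equals $M$ because $\mathrm{N}_G(Q) = M$; and the centralizer containment reduces to $\mathrm{C}_{\tilde{G}\mathcal{B}}(G) = \mathrm{Z}(\tilde{G}) \leq \tilde{M}$, recorded at the start of the section. For the upgrade to $\geq_c$, elements of $\mathrm{C}_{(\tilde{G}\mathcal{B})_\chi}(G)$ act on $\mathcal{P},\mathcal{P}'$ by scalars that are, via Schur's lemma, the values of the central characters of $\tilde\chi$ and $\tilde\chi'$ on $\mathrm{Z}(\tilde{G})$; their equality is the second bullet of (iv).

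The main obstacle is the promotion from $\geq_c$ to $\geq_b$: hypothesis (v) supplies the required block identity only for subgroups $J$ with $G \leq J \leq \tilde{G}_\chi$, whereas $\geq_b$ demands it for every intermediate subgroup up to $(\tilde{G}\mathcal{B})_\chi$. I would propagate it by invoking Lemma \ref{newred} with $A = (\tilde{G}\mathcal{B})_\chi$, $N = G$, $J = \tilde{G}_\chi$, feeding in the extensions carried by $\mathcal{P}$ and $\mathcal{P}'$; the $\mathrm{N}_A(I)$-stability of $\bl(\mathrm{Res}_I^{A[b]}\tilde\phi)$ required by the lemma follows from (v) together with the conjugation equivariance in (iv). For every simple type other than $D_4$ the quotient $A/\tilde{G}_\chi$ is abelian, so the Hall-$\ell'$ and $\mathrm{N}_A(I)J = A$ hypotheses are automatic; the $D_4$-case, where the outer group has the shape $S_3 \times C_m$ or $S_4 \times C_m$, is exactly the scenario addressed by Remark \ref{newred3}, which supplies the extension also when $\ell = 3$ through the Sylow-based modification of $\tilde\chi_1$. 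Passing to the quotient by $\mathrm{Z}$ is then routine because $\tilde\chi$, $\tilde\chi'$ and $\chi_0$ are trivial on $\mathrm{Z}$ by the central-character compatibility, so $\mathcal{P}$ and $\mathcal{P}'$ descend to the quotients and all block equalities transfer, yielding the claimed relation with $\overline{\chi}$ and $\overline{\chi'}$.
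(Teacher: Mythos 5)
Your overall plan — build the projective representations for $\geq$ and $\geq_c$ directly from (i)--(iv), then propagate the block compatibility of (v) using Lemma~\ref{newred} and handle $D_4$ via Remark~\ref{newred3} — is the right spirit, but there are two real gaps.

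First, the order of deflation is wrong. You propose to establish $\geq_b$ for the undeflated triples $(( \tilde{G} \mathcal{B})_\chi, G, \chi)$ and $((\tilde{M} \mathrm{N}_{G \mathcal{B}}(Q))_{\chi'}, M, \chi')$ first, and only at the end descend to the quotients by $Z = \mathrm{Ker}(\chi) \cap \mathrm{Z}(G)$, claiming this descent is "routine." It is not: deflated $\geq_b$ implies undeflated $\geq_b$ by \cite[Lemma 3.12]{JEMS}, but as the paper explicitly records, \emph{the converse is not true in general}. The trouble is that $\mathrm{C}_{(\tilde{G}\mathcal{B})_\chi / Z}(G/Z)$ can strictly contain the image of $\mathrm{C}_{(\tilde{G}\mathcal{B})_\chi}(G)$, so both the centralizer containment and the scalar comparison on that centralizer become new conditions in the quotient. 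The paper sidesteps this precisely by invoking \cite[Lemma 2.7]{S12}, which gives $\geq_c$ directly for the deflated triples from hypotheses (i)--(iv), and then upgrades that to $\geq_b$. You would need to start on the quotient side from the outset.

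Second, your invocation of Lemma~\ref{newred} with $A=(\tilde{G}\mathcal{B})_\chi$, $N=G$, $J=\tilde{G}_\chi$ does not match how that lemma actually enters. Lemma~\ref{newred} compares two characters $\chi,\phi\in\Irr(N,b)$ of the \emph{same} normal subgroup $N$ with extensions $\tilde{\chi}$ to $A$ and $\tilde{\phi}$ to $A[b]$; it never directly mentions the local group $M$ or the local character $\chi'$. In the paper's argument, this lemma is applied inside the central extension $\varepsilon\colon A\to\mathrm{Aut}(G)_\chi$ built in \cite[Proposition 4.2]{CS14}, with $N=\varepsilon^{-1}(\mathrm{Inn}(G))$ and $J=\varepsilon^{-1}(\mathrm{Aut}_{\tilde{G}_\chi}(G))$, and the companion character $\tilde{\phi}\in\Irr(A[b])$ is built from the \emph{local} data via the construction in the proof of \cite[Theorem 4.1]{CS14}; the conclusion is then transported back to $(\tilde{G}\mathcal{B})_\chi$ via the Butterfly Theorem~\ref{Butterfly}. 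Your version neither produces the required pair $(\phi,\tilde{\phi})$ encoding $\chi'$, nor explains the $\mathrm{N}_A(I)$-stability of $\bl(\Res_I^{A[b]}\tilde{\phi})$ (the paper deduces this from the explicit construction of $\tilde{\phi}$, not from a direct consequence of (iv) and (v)), and omits the Butterfly transfer. Without these, the promotion from $\geq_c$ to $\geq_b$ is not established.
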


\begin{proof}
	If assumptions (i)-(iv) hold then we have
	$$(( \tilde{G} \mathcal{B})_\chi /Z, G/Z, \overline{\chi}) \geq_c ((\tilde{M} \mathrm{N}_{G \mathcal{B}}(Q))_{\chi'} / Z,M/Z, \overline{\chi'}),$$
	by \cite[Lemma 2.7]{S12}.
	It therefore remains to show that the additional property in Definition \ref{blockisomorphism}(ii) holds in order to show that the relation $\geq_b$ holds as well. For this, we go through the proof of \cite[Proposition 4.2]{CS14} which is still applicable under our assumptions since we can replace \cite[Equation (4.2)]{CS14} in the proof of \cite[Proposition 4.2]{CS14} by our assumption (v).
	
	Now we want to apply the proof of \cite[Theorem 4.1]{CS14}. In the notation of  \cite[Proposition 4.2]{CS14}, Cabanes--Späth construct a group $A$ together with a central extension $\varepsilon: A \to \mathrm{Aut}(G)_\chi$ of $\mathrm{Aut}(G)_\chi$. Denote by $\mathrm{Aut}_{\tilde{G}_\chi}(G)$ the subgroup of automorphisms of $\mathrm{Aut}(G)$ induced by the conjugation action of $\tilde{G}_\chi$. We set $J:= \varepsilon^{-1}(\mathrm{Aut}_{\tilde{G}_\chi}(G))$ amd $N:=\varepsilon^{-1}(\mathrm{Inn}(G))$. By construction, $A/J \cong \mathcal{B}_\chi$, which is abelian unless possibly if $G$ is of type $D_4$.
	
	If $A/J$ is abelian then the group-theoretic assumptions of \cite[Lemma 3.2]{CS14} are satisfied. Thus, we can apply the proof of \cite[Theorem 4.1]{CS14} without change and we deduce that the characters $\chi$ and $\chi'$ satisfy the conditions in \cite[Definition 2.1(c)]{CS14}.

	If $A/J$ is non-abelian then as argued above $G$ is of type $D_4$ and it follows that $A/N \cong S_a \times C_m$ for some integer $m$ and $a\in \{3,4\}$ and $J=N$ if $a=3$ or $J=V_4$ if $a=4$, i.e. we are in the situation of Remark \ref{newred3}. We go through the proof of \cite[Theorem 4.1]{CS14} again. Consider the character $\tilde \phi \in \Irr(A[b])$ constructed in the proof of \cite[Theorem 4.1]{CS14}. By its definition given there one can see that $\mathrm{bl}(\Res_I^{A[b]}\tilde \phi)$ is indeed $A$-stable. Thus, instead of applying \cite[Lemma 3.2]{CS14} in the proof of  \cite[Theorem 4.1]{CS14} we use Lemma \ref{newred} and Remark \ref{newred3}. This shows that also in this case the characters $\chi$ and $\chi'$ satisfy the conditions in \cite[Definition 2.1(c)]{CS14}. 
	
	However, since the characters $\chi$ and $\chi'$ satisfy the conditions in \cite[Definition 2.1(c)]{CS14} it follows by Theorem \ref{Butterfly} that the additional property in Definition \ref{blockisomorphism}(ii) holds. This finishes the proof.
	%
\end{proof}

%
%
%

%
%

We will check condition (v) in Theorem \ref{12} using the following:

\begin{lemma}\label{Blockinduction2}
	Let $\chi \in \Irr(G,b)$ and $ \chi' \in \Irr(\mathrm{N}_G(Q),B_Q)$ such that $\tilde{G}_\chi=G \tilde{M}_{\chi'}$. Let $\tilde{\chi} \in \Irr(\tilde{G}_\chi \mid \chi)$ be an extension of $\chi$ and $\tilde{\chi}' \in \Irr(\tilde{M}_{\chi'} \mid \chi')$ be an extension of $\chi'$ such that
	$\mathrm{bl}(\tilde{\chi})= \mathrm{bl}(\tilde{\chi'})^{\tilde{G}_\chi}$.
	Then we have
	$$\mathrm{bl}(\mathrm{Res}^{\tilde{G}_\chi}_{J}(\tilde{\chi}))= \mathrm{bl}(\mathrm{Res}^{\mathrm{N}_{\tilde{G}}(Q)_{\chi'}}_{\mathrm{N}_J(Q)}(\tilde{\chi}'))^{J}$$ for all $G \leq J \leq \tilde{G}_{\chi}$.
\end{lemma}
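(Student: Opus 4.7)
My plan is to propagate the block-induction identity $\mathrm{bl}(\tilde\chi)=\mathrm{bl}(\tilde\chi')^{\tilde{G}_\chi}$ down to intermediate subgroups $J$ via the standard reduction to subgroups of the form $\langle G,x\rangle$ with $x\in J$ an $\ell'$-element, in the spirit of the argument in the proof of Lemma \ref{newred} using \cite[Lemmas 2.4 and 2.5]{KS}.

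First I set up the group-theoretic decomposition. By the Dedekind modular law applied to $\tilde{G}_\chi=G\tilde{M}_{\chi'}$, any intermediate $G\leq J\leq \tilde{G}_\chi$ satisfies $J=G\cdot H$ with $H:=J\cap\tilde{M}_{\chi'}$. Since $H\leq\tilde{M}=\mathrm{N}_{\tilde{G}}(Q)$, and since $\chi'$ is automatically fixed by $M$ (inner conjugation acts trivially on characters), one has $M\leq\tilde{M}_{\chi'}$; a direct check then gives $\mathrm{N}_J(Q)=M\cdot H\leq\tilde{M}_{\chi'}$. Consequently, setting $\eta:=\mathrm{Res}^{\tilde{G}_\chi}_{J}(\tilde{\chi})$ and $\eta':=\mathrm{Res}^{\tilde{M}_{\chi'}}_{\mathrm{N}_J(Q)}(\tilde{\chi}')$, both restrictions are well defined, and are irreducible as extensions of $\chi$ and $\chi'$ to their respective overgroups.

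Next I reduce the goal $\mathrm{bl}(\eta')^J=\mathrm{bl}(\eta)$ to local statements on subgroups $\langle G,x\rangle$ via \cite[Lemma 2.5]{KS}. For an $\ell'$-element $x\in J$, using $J=GH$, write $x=gm$ with $g\in G$ and $m\in H\subseteq\tilde{M}_{\chi'}$; since $G\trianglelefteq\langle G,x\rangle$ one has $\langle G,x\rangle=\langle G,m\rangle$, so after a $J$-conjugation matching Hall $\ell'$-subgroups we may assume $x\in H$. Then $\langle G,x\rangle\leq G\tilde{M}_{\chi'}=\tilde{G}_\chi$ and $\mathrm{N}_{\langle G,x\rangle}(Q)=M\langle x\rangle\leq\tilde{M}_{\chi'}$. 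On this smaller subgroup the hypothesis $\mathrm{bl}(\tilde\chi)=\mathrm{bl}(\tilde\chi')^{\tilde{G}_\chi}$ restricts via \cite[Lemma 2.4]{KS} (in its ordinary-character form, as in the proof of Lemma \ref{newred}) to yield exactly the required local block-induction identity $\mathrm{bl}(\mathrm{Res}^{J}_{\langle G,x\rangle}(\eta))=\mathrm{bl}(\mathrm{Res}^{\mathrm{N}_J(Q)}_{\mathrm{N}_{\langle G,x\rangle}(Q)}(\eta'))^{\langle G,x\rangle}$; assembling these identities over $\ell'$-elements $x\in J$ by \cite[Lemma 2.5]{KS} produces the claim.

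The main obstacle is confirming that \cite[Lemmas 2.4 and 2.5]{KS} may indeed be invoked in this relative Brauer-induction setup, where the top-level equality is already an induced block identity rather than a plain equality of blocks within one group; this amounts to a standard central-character computation once the containment $\mathrm{N}_{\langle G,x\rangle}(Q)\leq\tilde{M}_{\chi'}$ is in hand. A secondary point is checking that $\mathrm{bl}(\eta')^J$ is defined, which follows from the defect-group/centralizer inclusion forced by $Q$ being characteristic in the defect group $D$ of $b$.
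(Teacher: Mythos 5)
Your proposal takes a genuinely different route from the paper's, and it contains a concrete gap. The paper's own argument is short and does not reduce to $\ell'$-elements at all: since $\tilde{G}/G$ is abelian, $J$ is normal in $\tilde{G}_\chi$ and likewise $\mathrm{N}_J(Q)$ is normal in $\tilde{M}_{\chi'}=\mathrm{N}_{\tilde G}(Q)_{\chi'}$. Hence $\mathrm{Res}^{\tilde{G}_\chi}_{J}(\tilde{\chi})$ and $\mathrm{Res}^{\tilde{M}_{\chi'}}_{\mathrm{N}_J(Q)}(\tilde{\chi}')$ are irreducible and stable under the overgroup, so their blocks are the \emph{unique} blocks of $J$ and of $\mathrm{N}_J(Q)$ covered by $\mathrm{bl}(\tilde{\chi})$ and by $\mathrm{bl}(\tilde{\chi}')$, respectively. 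Feeding in the two endpoint relations $\mathrm{bl}(\chi')^G=\mathrm{bl}(\chi)$ and $\mathrm{bl}(\tilde{\chi}')^{\tilde{G}_\chi}=\mathrm{bl}(\tilde{\chi})$, together with the standard compatibility of block induction with covering along normal chains, then pins down the intermediate relation $\mathrm{bl}(\mathrm{Res}^{\tilde{G}_\chi}_{J}(\tilde{\chi}))=\mathrm{bl}(\mathrm{Res}^{\tilde{M}_{\chi'}}_{\mathrm{N}_J(Q)}(\tilde{\chi}'))^{J}$. The key observation that makes everything collapse is the normality of $J$, which you do not exploit.

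The gap in your argument is the invocation of \cite[Lemmas~2.4 and 2.5]{KS}. Those lemmas (as used in the proof of Lemma~\ref{newred}) compare \emph{restrictions of two characters to the same subgroups of a single ambient group}: there, $\tilde{\chi}_1\in\Irr(A)$ and $\tilde{\phi}\in\Irr(A[b])$ are both restricted to $J_2$ with $N\leq J_2\leq A[b]$, and the conclusion is equality of blocks \emph{of the same group} $J_2$. Here the target is a block-\emph{induction} identity between two different groups $J$ and $\mathrm{N}_J(Q)$. You flag this as ``the main obstacle'' and then assert it ``amounts to a standard central-character computation,'' but that is precisely the point that needs proving. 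Concretely: your step asserting that the global hypothesis $\mathrm{bl}(\tilde{\chi})=\mathrm{bl}(\tilde{\chi}')^{\tilde{G}_\chi}$ ``restricts via \cite[Lemma 2.4]{KS}'' to the local identity at $\langle G,x\rangle$ is unsupported, because that lemma concerns restricting a block equality in one group, not pushing a global-to-local Brauer-induction relation down a chain; and your assembly step via \cite[Lemma 2.5]{KS} would need both sides to be blocks of the same intermediate $J_2$, which is not the situation here. Filling these gaps would effectively require re-deriving the covering/Brauer-correspondence compatibility that the paper's shorter argument uses directly, so the reduction to $\ell'$-elements buys nothing and leaves the essential step unproved.
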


\begin{proof}
	Since $\tilde{G}/G$ is abelian, it follows that $J$ is a normal subgroup of $\tilde{G}_\chi$. Hence, $\mathrm{bl}(\mathrm{Res}^{\tilde{G}_\chi}_J(\tilde{\chi}))$ is the unique block which is covered by $\mathrm{bl}(\tilde{\chi})$.
	On the other hand, $\mathrm{bl}(\mathrm{Res}^{\mathrm{N}_{\tilde{G}}(Q)_{\chi'}}_{\mathrm{N}_J(Q)}(\tilde{\chi}'))$ is the unique block of $\mathrm{N}_J(Q)$ which is covered by $\mathrm{bl}(\tilde{\chi}')$.
	%
	Moreover, we have $\mathrm{bl}(\chi')^G=\mathrm{bl}(\chi)$ and $\mathrm{bl}(\tilde{\chi}')^{\tilde{G}_\chi}=\mathrm{bl}(\tilde{\chi})$. We deduce that
	$\mathrm{bl}(\mathrm{Res}^{\tilde{G}_\chi}_J(\tilde{\chi}))= \mathrm{bl}(\mathrm{Res}^{\mathrm{N}_{\tilde{G}}(Q)_{\chi'}}_{\mathrm{N}_J(Q)}(\tilde{\chi}'))^{J}$.
\end{proof}

\subsection{Extension of characters}

Suppose that $\G^F$ is not of untwisted type $D_4$. Let $\Levi^\ast$ be the minimal Levi subgroup of $\G^\ast$ containing $\mathrm{C}(s):=\mathrm{C}_{(\G^\ast)^{F^\ast}}(s)\mathrm{C}^\circ_{\G^\ast}(s)$.

 Let $\sigma: \tilde{\G} \to \tilde{\G}$ and $F_0: \tilde{\G} \to \tilde{\G}$ be the automorphisms constructed in \cite[Proposition 4.9]{Jordan} and denote by $\mathcal{A} \subseteq \mathrm{Aut}(\tilde{\G}^F)$ the subgroup generated by these automorphisms.
 Then \cite[Proposition 4.9]{Jordan} shows that there exists a Levi subgroup $\Levi$ of $\G$ in duality with the Levi subgroup $\Levi^\ast$ such that $\mathcal{A}$ stabilizes $\Levi$ and $e_s^{\Levi^F}$.


The next lemma shows that extendibility to $G \mathcal{A}$ can be compared with extendibility to $G \mathcal{B}$. In the following, we denote by $\mathrm{ad}(x): G \to G$ the inner automorphism of $\tilde{G}$ given by conjugation with $x \in G$.

\begin{lemma}\label{AE}
	Let $\chi \in \Irr(\G^F , e_s^{\G^F})$. Then the character $\chi$ extends to $G \mathcal{A}_\chi$ if and only if it extends to $G \mathcal{B}_\chi$.
\end{lemma}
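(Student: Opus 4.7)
The plan is to exploit the fact that, by the construction in \cite[Proposition 4.9]{Jordan}, each generator of $\mathcal{A}$ (namely $\sigma$ and $F_0$) is obtained from the corresponding generator of $\mathcal{B}$ (namely $\phi_0$ and a graph automorphism in $\mathcal{G}$) by composition with an inner automorphism $\mathrm{ad}(t)$ for some $t\in\tilde{G}$. Consequently, $\mathcal{A}$ and $\mathcal{B}$ have the same image in $\operatorname{Out}(\tilde{G})$, so that $\tilde{G}\mathcal{A}=\tilde{G}\mathcal{B}$ as subgroups of $\tilde{G}\Aut(\tilde{G})$. Taking stabilizers of $\chi$ yields the key equality
$$(\tilde{G}\mathcal{A})_\chi = (\tilde{G}\mathcal{B})_\chi.$$

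Next, I would reduce the two extendibility statements to the common overgroup $(\tilde{G}\mathcal{A})_\chi$. Since $\tilde{G}/G$ is abelian, Clifford theory guarantees that $\chi$ extends to $\tilde{G}_\chi$. I would then show the chain of equivalences: $\chi$ extends to $G\mathcal{A}_\chi$ if and only if $\chi$ extends to $\tilde{G}_\chi\cdot G\mathcal{A}_\chi=(\tilde{G}\mathcal{A})_\chi$. The nontrivial direction here combines an extension of $\chi$ to $G\mathcal{A}_\chi$ with an extension to $\tilde{G}_\chi$; the obstruction to patching the two into a character of the join vanishes because the quotient $(\tilde{G}\mathcal{A})_\chi/G\mathcal{A}_\chi$ is a quotient of the abelian group $\tilde{G}_\chi/G$ and hence the relevant cohomology classes can be matched by twisting with a linear character of $\tilde{G}_\chi/G$. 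The analogous statement holds verbatim for $\mathcal{B}$ in place of $\mathcal{A}$.

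Combining these three steps (reduction of $\mathcal{A}$-extendibility to extendibility over $(\tilde{G}\mathcal{A})_\chi$, the equality $(\tilde{G}\mathcal{A})_\chi = (\tilde{G}\mathcal{B})_\chi$, and the reduction of $\mathcal{B}$-extendibility) yields the claimed equivalence.

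The main obstacle I expect is the Clifford-theoretic reduction from $G\mathcal{A}_\chi$ to $(\tilde{G}\mathcal{A})_\chi$: although $\chi$ certainly admits an extension to $\tilde{G}_\chi$, the extension obtained to $G\mathcal{A}_\chi$ (when it exists) need not \emph{a priori} be compatible with a chosen extension to $\tilde{G}_\chi$ on the intersection $\tilde{G}_\chi \cap G\mathcal{A}_\chi = G$; one must verify that the family of extensions of $\chi$ to $\tilde{G}_\chi$, parametrized by $\Irr(\tilde{G}_\chi/G)$, is rich enough to match any chosen extension to $G\mathcal{A}_\chi$, so that the two can be spliced into a character of the join. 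This is essentially a standard construction for normal inclusions with abelian quotient, but handling it cleanly is where the real content of the lemma lies.
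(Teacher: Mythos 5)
Your proposal has a genuine gap, and it also takes a different route from the paper.

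The first issue is the claim that $\tilde{G}\mathcal{A}=\tilde{G}\mathcal{B}$. By \cite[Proposition 4.9]{Jordan} the image of $\tilde{G}\mathcal{A}$ in $\operatorname{Out}(G)$ is the \emph{stabilizer} of $e_s^{\G^F}$, whereas $\tilde{G}\mathcal{B}$ surjects onto all of $\operatorname{Out}(G)$ (since $\mathcal{B}$ is generated by \emph{all} graph and field automorphisms). Moreover, $\mathcal{A}=\langle\sigma,F_0\rangle$ with $F_0=\phi_0^{i_0}$ a proper power of $\phi_0$ in general, so $\mathcal{A}$ and $\mathcal{B}$ do not even have the same image modulo inner automorphisms. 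What \emph{is} true is the weaker statement $(\tilde{G}\mathcal{A})_\chi=(\tilde{G}\mathcal{B})_\chi$: any $\sigma\in\mathcal{B}$ stabilizing $\chi\in\Irr(G,e_s^{\G^F})$ must stabilize $e_s^{\G^F}$, hence lies, up to $\tilde{G}$, in $\mathcal{A}$. Equivalently, $\mathcal{A}_\chi$ and $\mathcal{B}_\chi$ have the same image in $\operatorname{Out}(G)$, which is the only fact the paper uses.

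The second issue is more serious: the asserted equivalence \emph{``$\chi$ extends to $G\mathcal{A}_\chi$ iff $\chi$ extends to $(\tilde{G}\mathcal{A})_\chi$''} is not established and is not true in general. It already presupposes $(\tilde{G}\mathcal{A})_\chi=\tilde{G}_\chi\cdot G\mathcal{A}_\chi$, which is exactly the stabilizer condition in Theorem~\ref{12}(i); that is a genuine assumption (supplied separately by Theorem~\ref{starcondition}), not something automatically available in Lemma~\ref{AE}. And even granting that equality, the patching step is obstructed: writing $A=\tilde{G}_\chi/G$ and $B=\mathcal{A}_\chi$, the cohomology class of $\chi$ in $H^2\bigl((\tilde{G}\mathcal{A})_\chi/G,K^\times\bigr)$ has, via the K\"unneth decomposition, a component in $\operatorname{Hom}(A\otimes B,K^\times)$ which is not detected by restricting to $\tilde{G}_\chi$ or to $G\mathcal{A}_\chi$ alone; ``twisting by a linear character of $\tilde{G}_\chi/G$'' does not in general kill it. So the claimed equivalence is precisely the gap you flag at the end, and it remains unfilled.

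The paper avoids this altogether by staying inside $G\mathcal{A}_\chi$ and $G\mathcal{B}_\chi$ and never invoking $\tilde{G}$. It observes that $\mathcal{A}_\chi$ and $\mathcal{B}_\chi$ have the same image in $\operatorname{Out}(G)$; if this image is cyclic, both stabilizers are cyclic and extensions exist trivially. In the non-cyclic case one writes $\mathcal{A}_\chi=\langle\mathrm{ad}(x)\gamma,\ F_0^i\rangle$ and $\mathcal{B}_\chi=\langle\gamma,\ F_0^i\rangle$, and because $\langle F_0^i\rangle$ has cyclic complement in each stabilizer, Clifford theory reduces extendibility of $\chi$ to $G\mathcal{A}_\chi$ (resp.\ $G\mathcal{B}_\chi$) to the existence of an $\mathrm{ad}(x)\gamma$-invariant (resp.\ $\gamma$-invariant) extension of $\chi$ to $G\langle F_0^i\rangle$. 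Since $x\in\G^{F_0}\subseteq G\leq G\langle F_0^i\rangle$, conjugation by $x$ fixes every character of $G\langle F_0^i\rangle$, so $\gamma$-invariance and $\mathrm{ad}(x)\gamma$-invariance are the same condition, which gives the equivalence directly and without any appeal to the stabilizer condition. If you want to repair your argument you would need to either import the stabilizer condition as a hypothesis and prove that the cross-term obstruction vanishes for these specific groups, or replicate this more structural argument which sidesteps $\tilde{G}$ entirely.
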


\begin{proof}
	By \cite[Proposition 4.9]{Jordan} the image of $\tilde{G} \rtimes \mathcal{A}$ in $\mathrm{Out}(G)$ is the stabilizer of $e_s^{\G^F}$ in $\mathrm{Out}(G)$. From this it follows that $\mathcal{A}_\chi$ and $\mathcal{B}_\chi$ generate the same group in $\mathrm{Out}(G)$. Thus, if $\mathcal{A}_\chi$ is cyclic then so is $\mathcal{B}_\chi$ and $\chi$ extends in both cases. Now assume that $\mathcal{A}_\chi$ is non-cyclic. Then by the proof of \cite[Proposition 4.9]{Jordan} there exists some $x\in \G^{F_0}$ such that $\mathcal{A}=\langle \mathrm{ad}(x) \gamma, F_0 \rangle$, where $F_0 \in \langle \phi_0 \rangle$ and $\gamma \in \mathcal{B}$ is the generator of the group of graph automorphisms. Since $\mathcal{A}_\chi$ is non-cyclic it follows that $\mathcal{A}_\chi= \langle \mathrm{ad}(x) \gamma, F_0^i \rangle$. Therefore, $\mathcal{B}_{\chi}= \langle \gamma, F_0^i \rangle$. By Clifford theory it follows that the character $\chi$ extends to $\mathcal{A}_\chi$ if and only if $\chi$ extends to an $\mathrm{ad}(x) \gamma$-invariant character of $G \langle F_0^i \rangle$. On the other hand, the character $\chi$ extends to $\mathcal{B}_\chi$ if and only if $\chi$ extends to a $\gamma$-invariant character of $G \langle F_0^i \rangle$. We conclude that $\chi$ extends to $G \mathcal{A}_\chi$ if and only if it extends to $G \mathcal{B}_\chi$.
\end{proof}

We have a local version of the previous lemma. Recall that $Q$ is assumed to be a characteristic subgroup of the defect group $D$ of $b$.

\begin{lemma}\label{AEloc}
	Let $\chi \in \Irr(\mathrm{N}_G(Q),B_Q)$. Then the character $\chi$ extends to its inertia group in $\mathrm{N}_{G \mathcal{A}}(Q,B_Q)$ if and only if it extends to its inertia group in $\mathrm{N}_{G \mathcal{B}}(Q,B_Q)$.
\end{lemma}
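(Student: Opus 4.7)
The plan is to reduce the statement to a group-theoretic isomorphism of inertia groups, following and adapting the argument of Lemma~\ref{AE}. First I would observe that every element of $\mathrm{N}_{G\mathcal{A}}(Q,B_Q)$ necessarily stabilizes $B_Q$ and hence, by Brauer correspondence applied to $(B_Q)^G = b$, stabilizes the block $b$ and therefore the central idempotent $e_s^{\G^F}$; the same applies to $\mathrm{N}_{G\mathcal{B}}(Q,B_Q)$. By \cite[Proposition 4.9]{Jordan}, the images of $\mathcal{A}$ and $\mathcal{B}$ in the stabilizer of $e_s^{\G^F}$ in $\mathrm{Out}(G)$ coincide, so $\mathrm{Inn}(G)\mathcal{A} = \mathrm{Inn}(G)\mathcal{B}$ on this stabilizer.

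Writing $\sigma = \mathrm{ad}(x)\gamma$ with $x \in \G^{F_0}$ as in the proof of Lemma~\ref{AE}, and setting $H_\mathcal{A} := \mathrm{N}_{G\mathcal{A}}(Q,B_Q)_\chi$ and $H_\mathcal{B} := \mathrm{N}_{G\mathcal{B}}(Q,B_Q)_\chi$, each $\alpha \in \mathcal{A}$ admits a decomposition $\alpha = \mathrm{ad}(y_\alpha)\beta_\alpha$ with $\beta_\alpha \in \mathcal{B}$, where $y_\alpha$ is a word built from $x$ according to the expression of $\alpha$ in the generators $\sigma, F_0$. The key step is then to verify that
\[ \Phi\colon (g,\alpha) \longmapsto (g\cdot y_\alpha,\, \beta_\alpha) \]
defines a group isomorphism $G\mathcal{A} \to G\mathcal{B}$ that is the identity on $G$; this reduces to the cocycle identity $y_{\alpha_1\alpha_2} = y_{\alpha_1}\beta_{\alpha_1}(y_{\alpha_2})$, which follows by direct calculation from the decomposition. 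Since $g\alpha$ and $(gy_\alpha)\beta_\alpha$ induce identical automorphisms of $G$, hence identical automorphisms of $Q$, of $B_Q$, and of $\chi$, the map $\Phi$ restricts to an isomorphism $H_\mathcal{A} \xrightarrow{\sim} H_\mathcal{B}$ that fixes $\mathrm{N}_G(Q)$ pointwise.

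Given this isomorphism, an extension of $\chi$ to $H_\mathcal{A}$ corresponds bijectively, via pullback through $\Phi^{-1}$, to an extension of $\chi$ to $H_\mathcal{B}$, and the claimed equivalence follows immediately. As a sanity check, when the quotient $H_\mathcal{A}/\mathrm{N}_G(Q)$ is cyclic, both sides admit extensions by Gallagher's theorem, so the content of the statement is really the non-cyclic case.

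The main obstacle is verifying that $\Phi$ is well defined and respects multiplication at the level of the full semidirect products. In Lemma~\ref{AE} this step was avoided by observing that $\mathrm{ad}(x)$ is inner on the ambient group $G\langle F_0^i\rangle$, so $\sigma$-invariance of an extension equals $\gamma$-invariance. In the local setting $x$ need not lie in $\mathrm{N}_G(Q)$, so this shortcut fails; one must instead argue structurally, tracking how the choice of coset representatives $g \in G$ converts an element of $H_\mathcal{A}$ into an element of $H_\mathcal{B}$ having the same underlying automorphism of $G$, and then verifying the compatibility of $\Phi$ with the group law using that $x \in \G^{F_0}$ commutes suitably with $F_0$.
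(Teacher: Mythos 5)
Your core observation—that elements of $G\mathcal{A}$ and $G\mathcal{B}$ which induce the same automorphism of $G$ have the same effect on characters—is exactly the mechanism the paper relies on, so you have identified the right idea. However, your execution differs from the paper's, and the step you yourself flag as the ``main obstacle'' is a genuine gap, not merely a technicality left to the reader.

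Your plan is to package the observation into a single global isomorphism $\Phi\colon G\mathcal{A}\to G\mathcal{B}$, defined by $(g,\alpha)\mapsto(gy_\alpha,\beta_\alpha)$ with $y_\alpha$ a word in $x$ read off from the expression of $\alpha$ in the generators $\sigma=\mathrm{ad}(x)\gamma$ and $F_0$. Because $\mathcal{A}$ is a finite (abelian) group, there are relations among the generators, and you must check that the cocycle rule $y_{\alpha_1\alpha_2}=y_{\alpha_1}\beta_{\alpha_1}(y_{\alpha_2})$ is compatible with them. The problematic relation is $\sigma^N=1$ in $\mathcal{A}\subseteq\mathrm{Aut}(\tilde{\G}^F)$, where $N$ is the order of $\sigma$. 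Writing $x_a=x\gamma(x)\cdots\gamma^{a-1}(x)$, the identity $\sigma^N=1$ only forces $\mathrm{ad}(x_N)\gamma^N=1$ on $\tilde{\G}^F$, which gives $x_N\in\mathrm{Z}(\tilde{G})\cap G=\mathrm{Z}(G)$, not $x_N=1$. If $x_N\neq1$, the element $\Phi(\sigma^N)=(x_N,1)$ is a nontrivial central element of $G\mathcal{B}$, and your $\Phi$ is not well defined. Nothing in the setup rules this out, and indeed Remark~\ref{centralizerproblem} in the paper already warns that $\mathrm{C}_{\tilde{G}\mathcal{A}}(G)$ can exceed $\mathrm{Z}(\tilde{G})$, which is exactly the phenomenon that would obstruct a center-respecting isomorphism.

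The paper's proof avoids constructing $\Phi$ altogether. It first reduces (as in Lemma~\ref{AE}) to the case where the relevant quotient is non-cyclic, writes $\mathrm{N}_{G\mathcal{A}}(Q)_\chi=\mathrm{N}_G(Q)\langle y\,\mathrm{ad}(x)\gamma,\,zF_0^i\rangle$ and $\mathrm{N}_{G\mathcal{B}}(Q)_\chi=\mathrm{N}_G(Q)\langle yx\gamma,\,zF_0^i\rangle$, and then applies Clifford theory to the normal subgroup $\mathrm{N}_G(Q)\langle zF_0^i\rangle$: extendibility of $\chi$ to either inertia group is equivalent to the existence of an extension to $\mathrm{N}_G(Q)\langle zF_0^i\rangle$ that is invariant under the remaining generator. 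Since $y\,\mathrm{ad}(x)\gamma$ and $yx\gamma$ induce the \emph{same} automorphism (conjugation by $yx$ followed by $\gamma$), the two invariance conditions coincide. This uses your ``matching automorphisms'' insight, but only for a single pair of coset representatives, so no cocycle computation is needed. You are also right that the shortcut in Lemma~\ref{AE}—that $\mathrm{ad}(x)$ is inner on $G\langle F_0^i\rangle$—is unavailable here because $x$ need not normalize $Q$; the paper compensates not by a global isomorphism but by this pointwise comparison of automorphisms on the normal cyclic extension. To repair your argument you would either need to prove $x_N=1$ in the relevant cases, or restrict $\Phi$ to the inertia subgroup and verify directly that it is a homomorphism there, which in effect reduces back to the paper's generator-by-generator argument.
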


\begin{proof}
	A short calculation shows that $\mathrm{N}_{G \mathcal{A}}(Q,B_Q)/ \mathrm{N}_G(Q) \cong\mathrm{N}_{\mathcal{A}}(b)$ and similarly $\mathrm{N}_{G \mathcal{B}}(Q,B_Q)/ \mathrm{N}_G(Q)\cong\mathrm{N}_{\mathcal{B}}(b)$. By the same argument as in Lemma \ref{AE} we may assume that the stabilizer $\mathrm{N}_{G \mathcal{A}}(Q)_{\chi}/ \mathrm{N}_G(Q)$ is non-cyclic. Therefore, $\mathcal{A}=\langle \mathrm{ad}(x) \gamma, F_0 \rangle$, where $F_0 \in \langle \phi_0 \rangle$. We conclude that there exist $y,z \in G$ such that $\mathrm{N}_{G \mathcal{A}}(Q)_{\chi}=\mathrm{N}_{G}(Q) \langle y \, \mathrm{ad}(x) \gamma, z F_0^i \rangle$. It follows that $\mathrm{N}_{G \mathcal{B}}(Q)_{\chi}= \mathrm{N}_{G}(Q)\langle yx \gamma, z F_0^i \rangle$. By Clifford theory it follows that the character $\chi$ extends to $\mathrm{N}_{G \mathcal{A}}(Q)_{\chi}$ if and only if $\chi$ extends to a $ y \, \mathrm{ad}(x) \gamma$-invariant character of $\mathrm{N}_{G}(Q) \langle z F_0^i \rangle$. On the other hand, the character $\chi$ extends to $\mathrm{N}_{G \mathcal{B}}(Q)_{\chi}$ if and only if $\chi$ extends to a $y x \gamma$-invariant character of $\mathrm{N}_{G}(Q) \langle F_0^i \rangle$. Therefore, both statements are equivalent.
\end{proof}

\begin{remark}\label{centralizerproblem}
	In Theorem \ref{12} one could try to replace $\mathcal{B}$ by the group $\mathcal{A}$. However, $\mathrm{C}_{\tilde{G} \mathcal{A}}(G)$ could be larger than $\mathrm{Z}(\tilde{G})$, see the remarks following Proposition \cite[Proposition 4.9]{Jordan}. We do not know however how to compute the values of the involved projective representations on this larger group.
\end{remark}

\subsection{The case $D_4$}\label{caseD4}

In the previous section we assumed that $\G^F$ is not of untwisted type $D_4$. The reason for this is that $\G^F$ admits in this case an additional graph automorphism. Thus, many of our considerations have to be altered in order to work in this case. The aim of this section is to provide a certain criterion for the extendibility of characters which is tailored to the situation of Theorem \ref{reductionquasi}.

Suppose in this section only that $\G$ is a simple, simply connected algebraic group of type $D_4$. We let $\phi_0: \G \to \G$ be the field automorphism defined in \cite[4.1]{Jordan} and we consider for any fixed prime power $q=p^f$ the Frobenius endomorphism $F= \phi_0^f : \G \to \G$ defining an $\mathbb{F}_q$-structure such that $\G^F$ is a finite quasi-simple group of untwisted type $D_4$.

%

\begin{corollary}\label{explicitD4}
	There exists a subgroup $\mathcal{C}$ of $\mathcal{B}$ such that the image of $\tilde{G} \rtimes \mathcal{C}$ in $\mathrm{Out}(G)$ is the stabilizer of $e_s^{\G^F}$ in $\mathrm{Out}(G)$.
\end{corollary}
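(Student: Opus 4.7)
The plan is to define $\mathcal{C}$ by the obvious stabilizer construction,
$$\mathcal{C} := \{c \in \mathcal{B} : c(e_s^{\G^F}) = e_s^{\G^F} \},$$
which is automatically a subgroup of $\mathcal{B}$, and then to check that its image (together with that of $\tilde G$) in $\mathrm{Out}(G)$ realises exactly the stabiliser of $e_s^{\G^F}$. Compared with \cite[Proposition 4.9]{Jordan}, used in the non-$D_4$ setting, the present corollary is genuinely weaker: it asks only that $e_s^{\G^F}$ itself be stabilised, not also a Levi subgroup $\Levi$ together with $e_s^{\Levi^F}$. This is why no twist by an element of $\G^{F_0}$ is needed here, and why we can stay entirely inside $\mathcal{B}$.

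The argument rests on two background facts. First, $\tilde G$ stabilises $e_s^{\G^F}$: this is standard from the Brou\'e--Michel construction \cite{BrMi}, since the conjugation action of $\tilde G$ on $\mathrm{Z}(\mathcal{O}\G^F)$ preserves the decomposition into rational Lusztig series, which is indexed by $(\G^\ast)^{F^\ast}$-conjugacy classes of $\ell'$-semisimple elements. Second, the image of $\tilde G \cdot \mathcal{B}$ in $\mathrm{Out}(G)$ is all of $\mathrm{Out}(G)$: by \cite[Theorem 2.5.1]{GLS} (already used in the paper) together with the fact that $\mathcal{G}$ is the full (triality) group of graph automorphisms in type $D_4$, the diagonal, field and graph components of $\mathrm{Out}(G)$ are jointly covered by $\tilde G$, $\phi_0$ and $\mathcal{G}$.

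Given these two facts the argument is short. The inclusion ``image of $\tilde G\mathcal{C} \subseteq \mathrm{Out}(G)_{e_s^{\G^F}}$'' is immediate from the definitions. For the reverse inclusion, I take an arbitrary $\alpha \in \mathrm{Out}(G)$ with $\alpha(e_s^{\G^F}) = e_s^{\G^F}$ and lift it, using the second fact, to a product $gb \in \tilde G\mathcal{B}$ with $g \in \tilde G$ and $b \in \mathcal{B}$. Since inner automorphisms of $\G^F$ act trivially on $\mathrm{Z}(\mathcal{O}\G^F)$, the action on this centre factors through $\mathrm{Out}(G)$; and since both $\alpha$ and the image of $g$ already fix $e_s^{\G^F}$, the same must then hold for $b$ itself on the level of $\G^F$. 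Hence $b \in \mathcal{C}$ and $\alpha$ lies in the image of $\tilde G\mathcal{C}$, as required.

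The only part that is not purely formal is the first background fact, namely the $\tilde G$-stability of the Brou\'e--Michel idempotent; everything else amounts to bookkeeping with the classification of automorphisms of simply connected groups in type $D_4$.
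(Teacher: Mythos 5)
Your proposal is correct and takes essentially the same approach as the paper: it constructs $\mathcal{C}$ as the stabilizer of $e_s^{\G^F}$ in $\mathcal{B}$, uses that the image of $\tilde G$ in $\mathrm{Out}(G)$ already lies in the stabilizer of $e_s^{\G^F}$, and that $\tilde G\rtimes\mathcal{B}$ surjects onto $\mathrm{Out}(G)$. The paper just leaves the explicit definition of $\mathcal{C}$ implicit and cites \cite[Lemma 7.4]{Dat} rather than Brou\'e--Michel directly for the $\tilde G$-stability of $e_s^{\G^F}$.
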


\begin{proof}
	Let $\mathrm{Diag}_{\G^F}$ be the image of the group of diagonal automorphisms in $\mathrm{Out}(\G^F)$. The stabilizer of $e_s^{\G^F}$ in $\mathrm{Out}(\G^F)$ contains $\mathrm{Diag}_{\G^F}$ by \cite[Lemma 7.4]{Dat}. Since $\tilde{G} \rtimes \mathcal{B}$ generates all automorphisms of $\G^F$ up to inner automorphisms, there exists a subgroup $\mathcal{C} \leq   \mathcal{B}$ such that the image of $\tilde{G} \rtimes \mathcal{C}$ in $\mathrm{Out}(G)$ is the stabilizer of $e_s^{\G^F}$ in $\mathrm{Out}(G)$.
	%
\end{proof}

Recall that $b$ is a block of $\mathcal{O} \G^F e_s^{\G^F}$ with defect group $D$ and characteristic subgroup $Q$. For every prime $r$ fix a Sylow $r$-subgroup $\mathcal{C}_r$ of $\mathcal{C}$. Note that $\mathcal{C}_r$ is contained in a Sylow $r$-subgroup of $\mathcal{B}$. Hence, there exists a graph automorphism $\gamma_r: \tilde{\G}  \to \tilde{\G}$ of order dividing $r$ and a Frobenius $F_r=\phi_0^{i_r}: \tilde{\G} \to \tilde{\G}$, with $i_r \mid f$, such that $\mathcal{C}_r= \langle \gamma_r, F_r \rangle$. We define $F_0:\tilde{\G} \to \tilde{\G}$ to be the Frobenius endomorphism which is the product of the $F_r$ over all primes $r$ diving the order of $\mathcal{B}$. In the following, we let $I \subseteq \{2,3\}$ be the set such that $\mathcal{C}_i$ is non-cyclic for $i \in I$.


\begin{lemma}\label{Sylowreduction}
	Let $\chi \in \Irr( \mathrm{N}_{G \mathcal{B}}(Q)_{\chi} \mid B_Q)$. The character $\chi$ extends to $\mathrm{N}_{G\mathcal{B}}(Q)_{\chi}$ if and only if for every $r \in I$ it extends to $\mathrm{N}_{G \mathcal{R}}(Q)_{\chi}$ for every Sylow $r$-subgroup $\mathcal{R}$ of $\mathcal{C}$.
\end{lemma}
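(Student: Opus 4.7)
The plan is to apply the standard Sylow criterion for character extendibility: for a finite group $H$ with normal subgroup $N$ and an $H$-invariant character $\theta\in\Irr(N)$, the character $\theta$ extends to $H$ if and only if it extends to $NP$ for every prime $p$ and every Sylow $p$-subgroup $P/N$ of $H/N$. The forward direction of the lemma is immediate: for any Sylow $r$-subgroup $\mathcal{R}$ of $\mathcal{C}\leq\mathcal{B}$ one has $\mathrm{N}_G(Q)\leq \mathrm{N}_{G\mathcal{R}}(Q)_\chi\leq\mathrm{N}_{G\mathcal{B}}(Q)_\chi$, and restricting an extension to the smaller inertia group yields the required extension.

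For the reverse direction I would set $N:=\mathrm{N}_G(Q)$, $H:=\mathrm{N}_{G\mathcal{B}}(Q)_\chi$, and identify $H/N$ with a subgroup of $\mathcal{B}$ via the natural projection. Since $\chi\in\Irr(N,B_Q)$ and $B_Q$ is the Brauer correspondent of the block $b\subseteq \mathcal{O} G e_s^{G^F}$, every element of $H/N$ stabilizes $e_s^{G^F}$. Corollary \ref{explicitD4} then shows that, modulo an inner automorphism by an element of $\tilde{G}$, every such stabilizing element is represented by an element of $\mathcal{C}$; a Sylow-conjugation argument inside $\mathrm{N}_{G\mathcal{B}}(Q)$ lets one further assume that any chosen Sylow $r$-subgroup of $H/N$ is contained in $N\mathcal{C}_r/N$ for the fixed Sylow $r$-subgroup $\mathcal{C}_r$ of $\mathcal{C}$. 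For $r\notin I$ the group $\mathcal{C}_r$ is cyclic by definition of $I$, and $N$-invariant characters extend along cyclic quotients automatically (the relevant Schur multiplier vanishes), so no further condition is needed at such primes. For $r\in I$ the hypothesis directly supplies an extension of $\chi$ to $\mathrm{N}_{G\mathcal{R}}(Q)_\chi$ for every Sylow $r$-subgroup $\mathcal{R}$ of $\mathcal{C}$, and restriction to the relevant subgroup $NP$ of $H$ produces the needed extension. The Sylow criterion then delivers the extension to $H$.

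The main obstacle I expect is the identification of Sylow subgroups described above: one must check that any Sylow $r$-subgroup of the abstract quotient $H/N$ can actually be represented by a Sylow subgroup $\mathcal{R}$ of $\mathcal{C}$ whose elements normalize $Q$ and fix $\chi$. The element of $\tilde{G}$ that absorbs the difference between a stabilizing automorphism and its $\mathcal{C}$-representative need not normalize $Q$ a priori, so one has to perform an additional Sylow-type conjugation inside $\mathrm{N}_{G\mathcal{B}}(Q)$ to arrange that the conjugating element can be chosen to normalize $Q$ and to fix $\chi$. Once this bookkeeping is in place, the remainder is a formal application of the Sylow extendibility criterion combined with the automatic extendibility of invariant characters along cyclic quotients.
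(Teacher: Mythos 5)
Your proposal follows the same overall strategy as the paper: reduce from $\mathcal{B}$ to $\mathcal{C}$ via Corollary~\ref{explicitD4}, invoke the Sylow criterion for character extendibility (\cite[Corollary 11.31]{Isaacs}), and dispose of the primes $r\notin I$ using that the corresponding Sylow subgroups are cyclic. You also correctly flag the only delicate point, namely identifying Sylow $r$-subgroups of $H:=\mathrm{N}_{G\mathcal{C}}(Q)_\chi/\mathrm{N}_G(Q)$ with preimages of Sylow $r$-subgroups of $\mathcal{C}$.

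Where the proposal does not quite close the gap is in how it handles that identification. You fix the Sylow $r$-subgroup $\mathcal{C}_r$ and then propose a Sylow-type conjugation inside $\mathrm{N}_{G\mathcal{B}}(Q)$ to arrange that the chosen Sylow $r$-subgroup of $H$ lies in $N\mathcal{C}_r/N$. This is problematic: conjugation by elements of $\mathrm{N}_{G\mathcal{B}}(Q)$ need not carry cosets of $\mathcal{C}_r$ to cosets of $\mathcal{C}_r$, and more generally need not stay within $\mathcal{C}$; also, the element of $\tilde G$ absorbing the inner part in Corollary~\ref{explicitD4} is not obviously inside $G\mathcal{B}$ or normalizing $Q$, so the conjugation you want isn't freely available. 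The paper sidesteps this entirely: it does not insist on the \emph{fixed} $\mathcal{C}_r$, but observes that for each prime $r$ there exists \emph{some} Sylow $r$-subgroup $\mathcal{R}$ of $\mathcal{C}$ (depending on $r$ and possibly different from $\mathcal{C}_r$) with $\mathrm{N}_{G\mathcal{R}}(Q)_\chi/\mathrm{N}_G(Q)$ a Sylow $r$-subgroup of $H$, and even remarks that this fails for arbitrary $\mathcal{R}$. This is precisely why the lemma's hypothesis quantifies over \emph{every} Sylow $r$-subgroup $\mathcal{R}$ of $\mathcal{C}$ for $r\in I$: it guarantees the extension on the $\mathcal{R}$ that actually realizes a Sylow subgroup of $H$, without needing to pin down which one. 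If you replace the conjugation step in your proof by this existence statement, the argument goes through exactly as in the paper.
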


\begin{proof}
	By Corollary \ref{explicitD4}, the image of $\tilde{G} \rtimes \mathcal{C}$ in $\mathrm{Out}(\G^F)$ is the stabilizer of $e_s^{\G^F}$ in $\mathrm{Out}(\G^F)$. Consequently, we have $\mathrm{N}_{G\mathcal{B}}(Q)_{\chi}= \mathrm{N}_{G\mathcal{C}}(Q)_{\chi}$. Denote $H:=\mathrm{N}_{G\mathcal{C}}(Q)_{\chi}/ \mathrm{N}_G(Q)$. There exists a Sylow $r$-subgroup $\mathcal{R}$ of $\mathcal{C}$ such that $\mathrm{N}_{G \mathcal{R}}(Q)_\chi / \mathrm{N}_G(Q)$ is a Sylow $r$-subgroup of $H$. (Note that this property is not necessarily true for all Sylow $r$-subgroups of $\mathcal{C}$.) Moreover, for $r \notin I$ all Sylow $r$-subgroups of $H$ are cyclic. By \cite[Corollary 11.31]{Isaacs} the character $\chi$ extends to $\mathrm{N}_{G\mathcal{C}}(Q)_{\chi}$ if and only if $\chi$ extends to the preimage of a Sylow $r$-subgroup of $H$ for every prime $r$. Since all Sylow $r$-subgroups of $H$ for $r \notin I$ are cyclic it follows that $\chi$ extends to $\mathrm{N}_{G\mathcal{B}}(Q)_{\chi}$ if and only if $\chi$ extends to $\mathrm{N}_{G \mathcal{R}}(Q)_\chi$ for all $r \in I$ and every Sylow $r$-subgroup $\mathcal{R}$ of $\mathcal{C}$.
\end{proof}

Let $\Levi^\ast$ be the minimal Levi subgroup of $\G^\ast$ containing $\mathrm{C}(s)$. By \cite[Lemma 4.6]{Jordan} and \cite[Remark 4.7]{Jordan} there exists a Levi subgroup $\Levi$ of $\G$ in duality with $\Levi^\ast$ such that $\Levi$ is $F_0$-stable. Recall that $\mathcal{O} \G^F e_s^{\G^F}$ and $\mathcal{O} \Levi^F e_s^{\Levi^F}$ are splendid Rickard equivalent, see \cite[Theorem 2.28]{Jordan}. Hence by \cite[Theorem 1.3]{Jordan} we can and we will assume that the defect group $D$ of $b$ is contained in $\Levi^F$.

Let $j \in I$. By the proof of \cite[Proposition 4.9]{Jordan} there exist $x_j \in \G^{F_0}$ such that $\sigma_j:=\mathrm{ad}(x_j) \gamma_j$ stabilizes $\Levi$ and $e_s^{\Levi^F}$. If $j \in \{ 2,3 \} \setminus  I$ there exists some bijective morphism $\pi_j: \tilde{\G} \to \tilde{\G}$ with $\mathcal{C}_j= \langle \pi_j \rangle$ and again by the proof of \cite[Proposition 4.9]{Jordan} there exist $x_j \in \G^{F_0}$ such that $\sigma_j:=\mathrm{ad}(x_j) \pi_j$ stabilizes $\Levi$ and $e_s^{\Levi^F}$. We then define $\mathcal{A}:= \langle \sigma_2,\sigma_3,F_0 \rangle \subseteq \mathrm{Aut}(\tilde{\G}^F)$.

For $r \in I$ consider an arbitrary Sylow $r$-subgroup $\mathcal{R}$ of $\mathcal{C}$. Then we have $\mathcal{R}= \langle \gamma , F_r \rangle$ for some graph automorphism $\gamma \in \mathcal{G}$. Hence, there exists some $x \in \G^{F_0}$ such that $\sigma:=\mathrm{ad}(x) \gamma \in \mathcal{A}$. We then denote $\mathcal{R}_{\mathcal{A}}:= \langle \sigma, F_r \rangle \subseteq \mathrm{Aut}(\tilde{\G}^F)$.

\begin{lemma}\label{LemmaBtoA}
	A character $\chi \in \Irr( \mathrm{N}_{G \mathcal{B}}(Q)_{\chi} \mid B_Q)$ extends to $\mathrm{N}_{G\mathcal{B}}(Q)_\chi$ if and only if for every $r \in I$ it extends to $\mathrm{N}_{G \mathcal{R}_{\mathcal{A}}}(Q)_\chi$ for all Sylow $r$-subgroups $\mathcal{R}$ of $\mathcal{C}$.
\end{lemma}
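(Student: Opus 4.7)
The plan is to combine Lemma~\ref{Sylowreduction} with a Sylow-by-Sylow application of the Clifford-theoretic argument already used in Lemma~\ref{AEloc}. By Lemma~\ref{Sylowreduction}, $\chi$ extends to $\mathrm{N}_{G\mathcal{B}}(Q)_\chi$ if and only if, for every $r \in I$ and every Sylow $r$-subgroup $\mathcal{R}$ of $\mathcal{C}$, $\chi$ extends to $\mathrm{N}_{G\mathcal{R}}(Q)_\chi$. It therefore suffices to show that, for each such fixed pair $(r,\mathcal{R})$, extendibility to $\mathrm{N}_{G\mathcal{R}}(Q)_\chi$ is equivalent to extendibility to $\mathrm{N}_{G\mathcal{R}_{\mathcal{A}}}(Q)_\chi$.

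For a fixed such pair, write $\mathcal{R} = \langle \gamma, F_r \rangle$ and $\mathcal{R}_{\mathcal{A}} = \langle \mathrm{ad}(x)\gamma, F_r \rangle$ with $x \in \G^{F_0}$ as in the construction preceding the statement. The automorphisms $\gamma$ and $\mathrm{ad}(x)\gamma$ of $\G^F$ differ by the inner automorphism $\mathrm{ad}(x)$ of $G$, so they induce the same class in $\mathrm{Out}(G)$; consequently the images of $\mathrm{N}_{G\mathcal{R}}(Q)_\chi$ and $\mathrm{N}_{G\mathcal{R}_{\mathcal{A}}}(Q)_\chi$ in $\mathrm{Out}(G)$ agree. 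If this common image is cyclic there is nothing to prove, and otherwise one may write, for suitable $y, z \in G$ and some integer $i$, the two inertia subgroups as
\begin{align*}
	\mathrm{N}_{G\mathcal{R}_{\mathcal{A}}}(Q)_\chi &= \mathrm{N}_G(Q)\langle y\,\mathrm{ad}(x)\gamma,\, zF_r^i\rangle, \\
	\mathrm{N}_{G\mathcal{R}}(Q)_\chi &= \mathrm{N}_G(Q)\langle yx\gamma,\, F_r^i\rangle,
\end{align*}
in exact analogy with the argument of Lemma~\ref{AEloc}.

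An application of Clifford theory then reduces each of the two extendibility questions to the existence of an extension of $\chi$ to a cyclic-modulo-$\mathrm{N}_G(Q)$ overgroup, invariant under the remaining generator: for $\mathrm{N}_{G\mathcal{R}_{\mathcal{A}}}(Q)_\chi$ the condition is the existence of a $y\,\mathrm{ad}(x)\gamma$-invariant extension to $\mathrm{N}_G(Q)\langle zF_r^i\rangle$, and for $\mathrm{N}_{G\mathcal{R}}(Q)_\chi$ the condition is the existence of a $yx\gamma$-invariant extension to $\mathrm{N}_G(Q)\langle F_r^i\rangle$. These two conditions coincide by the same manipulation as in the final sentence of the proof of Lemma~\ref{AEloc}. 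The main obstacle to making this rigorous is ensuring that the parameters $y,z,i$ in the two decompositions are compatibly chosen; this is automatic, since $\mathcal{R}$ and $\mathcal{R}_{\mathcal{A}}$ have identical image in $\mathrm{Out}(G)$ and hence produce the same inertia quotient, forcing the two descriptions to be obtained from one another by the substitution $\gamma \leftrightarrow \mathrm{ad}(x)\gamma$.
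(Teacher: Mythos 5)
Your proof follows exactly the same route as the paper: first apply Lemma~\ref{Sylowreduction} to reduce to a single Sylow $r$-subgroup $\mathcal{R}$ for $r \in I$, then for each such $\mathcal{R}$ show that extendibility to $\mathrm{N}_{G\mathcal{R}}(Q)_\chi$ is equivalent to extendibility to $\mathrm{N}_{G\mathcal{R}_{\mathcal{A}}}(Q)_\chi$ by the Clifford-theoretic argument of Lemma~\ref{AEloc}. The paper states the second step as ``exactly the same as in Lemma~\ref{AEloc}'' without spelling it out; you have unpacked that citation into the explicit parametrization of the inertia groups and the reduction to finding an invariant extension over a cyclic quotient, which is a faithful reproduction of the intended argument.
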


\begin{proof}
	By Lemma \ref{Sylowreduction}, the character $\chi$ extends to $\mathrm{N}_{G\mathcal{B}}(Q)_{\chi}$ if and only if $\chi$ extends to $\mathrm{N}_{G \mathcal{R}}(Q)_\chi$ for every Sylow $r$-subgroup $\mathcal{R}$ of $\mathcal{C}$ with $r \in I$. Hence it suffices to show that for every such $\mathcal{R}$ the character $\chi$ extends to $\mathrm{N}_{G \mathcal{R}}(Q)_\chi$ if and only if $\chi$ extends to $\mathrm{N}_{G \mathcal{R}_{\mathcal{A}}}(Q)_\chi$. The proof of the latter is now however exactly the same as in Lemma \ref{AEloc}.
\end{proof}

\subsection{Lusztig induction and extension of characters}

	Let $\G$ be a reductive group (not necessarily connected) with Frobenius endomorphism $F: \G \to \G$ and parabolic subgroup $\Para$ with Levi decomposition $\Para= \Levi \ltimes \U$, where $F(\Levi)=\Levi$. Recall that Lusztig induction yields a map $R_{\Levi \subseteq \Para}^\G: \mathbb{Z} \Irr(L) \to \mathbb{Z} \Irr(G)$. In our situation this map will actually be independent of the choice of the parabolic $\Para$ and if this is the case we write $R_{\Levi}^\G$ instead of $R_{\Levi \subseteq \Para}^\G$ 
	
	 Moreover $\mathrm{N}_\G(Q)$ is a reductive group with parabolic subgroup $\mathrm{N}_\Para(Q)$ which has a Levi decomposition $\mathrm{N}_{\Para}(Q)=\mathrm{N}_{\Levi}(Q) \C_\U(Q)$, see \cite[Example 2.3]{Jordan}. Consequently, we also obtain a map $R_{\mathrm{N}_{\Levi}(Q) \subseteq \mathrm{N}_{\Para}(Q)}^\G: \mathbb{Z} \Irr(\mathrm{N}_{L}(Q)) \to \mathbb{Z} \Irr(\mathrm{N}_{G}(Q))$.

\begin{lemma}\label{lemma1}
	Suppose that $\G$ is connected reductive and let $\iota: \G \hookrightarrow \Gtilde$ be a regular embedding.
	Let $\chi \in \Irr(\tilde{L})$, $\lambda \in \Irr(\tilde{G}/G)$ and $\psi \in \Irr(\mathrm{N}_{\tilde{L}}(Q))$. Then we have
	\begin{enumerate}[label=(\alph*)]
		\item 
		$\lambda R_{\tilde{\Levi} \subseteq \tilde{\Para}}^{\tilde{\G}}(\chi)= R_{\tilde{\Levi} \subseteq \tilde{\Para}}^{\tilde{\G}}( \mathrm{Res}^{\tilde{G}}_{\tilde{L}}(\lambda) \chi)$,
		\item 
		$\mathrm{Res}^{\tilde{G}}_{\mathrm{N}_{\tilde{G}}(Q)}(\lambda) R_{\mathrm{N}_{\tilde{\Levi}}(Q) \subseteq \mathrm{N}_{\tilde{\Para}}(Q)}^{\mathrm{N}_{\tilde{\G}}(Q)}( \psi)=  R_{\mathrm{N}_{\tilde{\Levi}}(Q) \subseteq \mathrm{N}_{\tilde{\Para}}(Q)}^{\mathrm{N}_{\tilde{\G}}(Q)}( \mathrm{Res}^{\tilde{G}}_{\mathrm{N}_{\tilde{L}}(Q)}(\lambda) \psi)$.
	\end{enumerate}
\end{lemma}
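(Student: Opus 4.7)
Both parts rest on the same observation: the linear character $\lambda \in \Irr(\tilde{G}/G)$ is trivial on $G = \G^F$, and therefore on every unipotent element of $\tilde{G}$ and on the unipotent radicals $\U^F$ (for (a)) and $\C_\U(Q)^F \subseteq \U^F$ (for (b)) that enter the definition of Lusztig induction.

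For part (a) I would apply the Deligne--Lusztig character formula to both sides of the claimed identity. For $g \in \tilde{G}$ with Jordan decomposition $g = su$ (so $s$ is semisimple in $\tilde{G}$ and $u \in \C_{\tilde{\G}}(s)^F$ is unipotent, hence lies in $G$), the formula reads
$$R_{\tilde{\Levi} \subseteq \tilde{\Para}}^{\tilde{\G}}(\chi)(g) = \frac{1}{|\C^\circ_{\tilde{\G}}(s)^F|} \sum_{\substack{x \in \tilde{G} \\ x^{-1} s x \in \tilde{L}}} Q^{\C^\circ_{\tilde{\G}}(s)}_{\C^\circ_{x\tilde{\Levi} x^{-1}}(s)}(u) \, \chi(x^{-1} s x),$$
where $Q$ denotes the Green function of the connected reductive group $\C^\circ_{\tilde{\G}}(s)$. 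Because $\lambda$ is a linear character of $\tilde{G}$ (in particular a class function) that is trivial on $G$, one has $\lambda(g) = \lambda(s)\lambda(u) = \lambda(s) = \lambda(x^{-1}sx)$ for every $x \in \tilde{G}$. Multiplying the display by the scalar $\lambda(g)$ and absorbing it into each summand as $\lambda(x^{-1}sx)$ converts the right-hand side into the character formula for $R_{\tilde{\Levi} \subseteq \tilde{\Para}}^{\tilde{\G}}(\Res^{\tilde{G}}_{\tilde{L}}(\lambda)\chi)$ evaluated at $g$. This yields (a).

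For part (b) I would repeat the argument verbatim with the reductive group $\mathrm{N}_{\tilde{\G}}(Q)$ in place of $\tilde{\G}$, using the parabolic $\mathrm{N}_{\tilde{\Para}}(Q)$ with Levi decomposition $\mathrm{N}_{\tilde{\Levi}}(Q) \ltimes \C_\U(Q)$ recalled in Example 2.3, and with $\Res^{\tilde{G}}_{\mathrm{N}_{\tilde{G}}(Q)}(\lambda)$ in place of $\lambda$. The crucial fact is that $\C_\U(Q)^F \subseteq \U^F \subseteq G$, so the restricted linear character is still trivial on the unipotent radical and on all unipotent parts of elements of $\mathrm{N}_{\tilde{\G}}(Q)^F$; the same manipulation of the character formula then yields (b).

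The main obstacle is to justify the Deligne--Lusztig character formula in the possibly non-connected reductive group $\mathrm{N}_{\tilde{\G}}(Q)$ appearing in (b). This is a standard extension of the connected case via the Lefschetz fixed-point formula applied to the Deligne--Lusztig variety attached to $\mathrm{N}_{\tilde{\Para}}(Q)$, and the relevant framework has already been set up by the references in Example 2.3 of the paper.
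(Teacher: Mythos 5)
Your proposal is correct and follows essentially the same route as the paper's primary argument: part~(a) is exactly the classical computation behind \cite[Proposition 13.30(ii)]{DM} (which the paper simply cites), resting on the fact that the linear character $\lambda$ factors through $\tilde{G}/G$, is constant on unipotent parts, and slides through the $\chi$-slot of the character formula; part~(b) is obtained by running the same argument with the disconnected character formula of \cite[Proposition 2.6(i)]{DM2}, which is precisely what the paper does, and you correctly flag that the only real issue is the validity of that formula in the disconnected set-up. The one thing the paper has that you do not mention is a second, ``more conceptual'' proof treating both parts uniformly: instead of the character formula it uses the cohomology bimodule $H_c^i(\Y_{\C_\U(Q)}^{\mathrm{N}_\G(Q)},K)$, inducing it from $(\mathrm{N}_{G}(Q) \times \mathrm{N}_{L}(Q)^{\mathrm{opp}}) \Delta(\mathrm{N}_{\tilde L}(Q))$ up to $\mathrm{N}_{\tilde G}(Q) \times \mathrm{N}_{\tilde L}(Q)^{\mathrm{opp}}$ and twisting by the character $\mu(\tilde g,\tilde\ell)=\lambda(\tilde g)\lambda(\tilde\ell)$, which is trivial on the diagonal; this bypasses the character formula in the disconnected case altogether, so if you want a self-contained proof of (b) that avoids invoking the disconnected Deligne--Lusztig character formula, that alternative is worth keeping in mind.
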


\begin{proof}
	Part (a) is classical. It is proved in \cite[Proposition 13.30(ii)]{DM} using the character formula for Lusztig induction from \cite[Proposition 12.2]{DM}.  
	
	The character formula for Lusztig induction has been generalized to disconnected reductive groups, see \cite[Proposition 2.6(i)]{DM2}. (Note that our definition of Levi subgroups and parabolic subgroups in \cite[2.1]{Jordan} is more general than the one in \cite{DM2}, but the same proof applies to our set-up.) Applying this explicit character formula to $R_{\mathrm{N}_{\tilde{\Levi}}(Q)}^{\mathrm{N}_{\tilde{\G}}(Q)}( \psi)$ gives the result in (b).
	
	A more conceptual proof of the lemma can be obtained as follows. Consider the $\mathrm{N}_{G}(Q) \times \mathrm{N}_{L}(Q)^{\mathrm{opp}} \Delta(\mathrm{N}_{\tilde L}(Q))$-module $M:=H_c^i(\Y_{\C_\U(Q)}^{\mathrm{N}_\G(Q)},K)$. By \cite[Lemma 2.7]{Jordan} we have 
	$$\Ind_{\mathrm{N}_{G}(Q) \times \mathrm{N}_{L}(Q)^{\mathrm{opp}} \Delta(\mathrm{N}_{\tilde L}(Q))}^{\mathrm{N}_{\tilde G}(Q) \times \mathrm{N}_{\tilde L}(Q)^{\mathrm{opp}}} M \cong \tilde{M},$$ where $\tilde{M}:=H_c^i(\Y_{\C_\U(Q)}^{\mathrm{N}_{\tilde{\G}}(Q)},K)$. Consider now the character $\mu: \mathrm{N}_{\tilde{G}}(Q) \times  \mathrm{N}_{\tilde{L}}(Q)^{\mathrm{opp}} \to K^\times,(\tilde{g},\tilde{l}) \mapsto \lambda(\tilde{g}) \lambda(\tilde{l}),$ and observe that $\mu$ is trivial on $\Delta \mathrm{N}_{\tilde{L}}(Q)$. Therefore, we have $\tilde{M} \cong \tilde{M} \otimes_K \mu$. From this the claim follows easily by definition of Lusztig induction.
\end{proof}

%
%

\begin{lemma}\label{lemma2}
For every $\chi \in \Irr(\Levi^F)$ the characters $\chi$ and $R_{\Levi \subseteq \Para}^{\G}(\chi)$ restrict to multiples of the same central character on $Z:= (\mathrm{Z}(\G) \cap \Levi)^F$.
\end{lemma}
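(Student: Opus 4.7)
The plan is to exploit the bimodule definition of Lusztig induction together with the fact that elements of $Z$, being central in $\G$, act identically from the left and the right on the Deligne--Lusztig variety $\Y_\U^\G$. First I would observe that since $Z \subseteq \mathrm{Z}(\G^F) \cap \Levi^F$, every $z \in Z$ acts by a scalar on the irreducible $K\Levi^F$-module $V_\chi$ affording $\chi$. Writing this scalar as $\omega(z)$ defines a linear character $\omega \colon Z \to K^\times$ and we have $\chi|_Z = \chi(1)\, \omega$. The task is then to show that $R_{\Levi \subseteq \Para}^\G(\chi)|_Z$ is the same multiple of $\omega$, namely $R_{\Levi \subseteq \Para}^\G(\chi)(1)\, \omega$.

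Next I would pass to the bimodule $H_c^i(\Y_\U^\G, K)$ with its commuting left $\G^F$- and right $\Levi^F$-actions. The key observation is that for $z \in Z \subseteq \mathrm{Z}(\G)$, left translation by $z$ on $\Y_\U^\G$ coincides with right translation by $z$, because $z$ is central in $\G$. Hence the action of $z$ on the cohomology through $\G^F$ agrees with its action through $\Levi^F$. By the definition of Lusztig induction as the virtual character of
$$\sum_{i} (-1)^i \, H_c^i(\Y_\U^\G, K) \otimes_{K\Levi^F} V_\chi,$$
the element $z$ then acts on this tensor product as the single scalar $\omega(z)$ (computed from either tensor factor, which agree by the previous sentence). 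Taking traces yields $R_{\Levi \subseteq \Para}^\G(\chi)(z) = \omega(z)\, R_{\Levi \subseteq \Para}^\G(\chi)(1)$ for every $z \in Z$, which is precisely the required conclusion.

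The main (minor) obstacle is just being careful with the left--right conventions on $\Y_\U^\G$, but centrality of $z$ makes this inessential. If one prefers to avoid the cohomological language entirely, the Deligne--Lusztig character formula applied to the central semisimple element $g=z$ (for which $\mathrm{C}^\circ_\G(z)=\G$) collapses to $R_{\Levi \subseteq \Para}^\G(\chi)(z) = Q_{\Levi}^{\G}(1)\, \chi(z)$; specialising at $z=1$ identifies the prefactor as $R_{\Levi \subseteq \Para}^\G(\chi)(1)/\chi(1)$, and the same conclusion follows. Either route is short and presents no real difficulty beyond unpacking definitions.
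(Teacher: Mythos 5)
Your first argument is correct and is essentially the paper's proof: the paper observes that the diagonal action of $Z$ fixes $\Y_\U^\G$ pointwise (equivalently, left and right translation by a central $z$ coincide), so the diagonal action on each $H_c^i(\Y_\U^\G,K)$ is trivial, and the conclusion follows exactly as you unwind from the bimodule description of $R_{\Levi\subseteq\Para}^\G$. The alternative route you sketch via the character formula for Lusztig induction is also valid but is not the one the paper uses.
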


\begin{proof}
	The diagonal action of $Z$ fixes the variety $\Y_\U^\G$ pointwise. Hence the diagonal action of $Z$ on the bimodule $H^{i}_c( \Y_\U^\G, K)$ for every $i$ is trivial. The claim of the lemma follows from this.
\end{proof}

\subsection{A first reduction of the iAM-condition}

In this section we describe the first step to reducing the verification of the iAM-condition. Before stating the main theorem we state the following theorem which will be crucial for our reduction.

\begin{theorem}\label{starcondition}
	In every $\tilde{G}$-orbit of $\Irr(G, e_s^{G})$ there exists a character $\chi \in \Irr(G, e_s^{G})$ satisfying assumption (i) of Theorem \ref{12}.
\end{theorem}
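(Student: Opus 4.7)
The plan is to reduce, via the $\mathcal{A}$-equivariant Morita equivalence of Theorem \ref{intro2}, to an analogous statement for the Levi subgroup $\Levi^F$, in which $s$ is quasi-isolated. As a first step I would reformulate condition (i) of Theorem \ref{12} in terms of $\mathcal{A}$ rather than $\mathcal{B}$: by \cite[Proposition 4.9]{Jordan}, the images of $\tilde{G}\mathcal{A}$ and $\tilde{G}\mathcal{B}$ in $\mathrm{Out}(G)$ coincide on the stabilizer of $e_s^G$, so the stabilizer factorization $(\tilde{G}\mathcal{B})_\chi = \tilde{G}_\chi \mathcal{B}_\chi$ is equivalent to $(\tilde{G}\mathcal{A})_\chi = \tilde{G}_\chi \mathcal{A}_\chi$, while Lemma \ref{AE} translates the extendibility requirement in the same way. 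I assume first that $\G$ is not of untwisted type $D_4$.

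Next I would invoke Theorem \ref{intro2}: the bimodule $M$ induces a Morita equivalence between $\mathcal{O}\tilde{L}\mathcal{A} e_s^L$ and $\mathcal{O}\tilde{G}\mathcal{A} e_s^G$ whose restriction to the subalgebras $\mathcal{O}Ge_s^G$ and $\mathcal{O}Le_s^L$ recovers the Bonnaf\'e--Rouquier equivalence of Theorem \ref{BDRintro}. Combined with the equivariance data for $M$ and standard Clifford theory, this yields a bijection
\[
f \colon \Irr(G, e_s^G) \to \Irr(L, e_s^L)
\]
that is $\mathcal{A}$-equivariant and is equivariant for the conjugation actions of $\tilde{G}$ on $\Irr(G, e_s^G)$ and $\tilde{L}$ on $\Irr(L, e_s^L)$ under the natural identification $\tilde{G}/G \cong \tilde{L}/L$. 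Consequently $f$ sends $\tilde{G}$-orbits to $\tilde{L}$-orbits, the stabilizer factorization $(\tilde{G}\mathcal{A})_\chi = \tilde{G}_\chi \mathcal{A}_\chi$ translates into $(\tilde{L}\mathcal{A})_{f(\chi)} = \tilde{L}_{f(\chi)} \mathcal{A}_{f(\chi)}$, and extendibility of $\chi$ to $(G\mathcal{A})_\chi$ translates into extendibility of $f(\chi)$ to $(L\mathcal{A})_{f(\chi)}$.

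It therefore suffices to exhibit, in every $\tilde{L}$-orbit of $\Irr(L, e_s^L)$, a representative $\chi_L$ with $(\tilde{L}\mathcal{A})_{\chi_L} = \tilde{L}_{\chi_L} \mathcal{A}_{\chi_L}$ that extends to $(L\mathcal{A})_{\chi_L}$. Since $s$ is quasi-isolated in $\Levi^*$ by construction of $\Levi$, the characters in $\Irr(L, e_s^L)$ are transparent through the Jordan decomposition associated to the regular embedding $\Levi \hookrightarrow \tilde{\Levi}$, and the explicit action of $\mathcal{A}$ on them is controlled by the constructions of \cite{Jordan}; the required representative is then produced by direct analysis of these data, typically by selecting an appropriate semisimple or regular-type character in the orbit. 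For the case of $D_4$ one instead uses the group $\mathcal{A}$ built in Section \ref{caseD4} and applies Sylow-type reductions of extendibility analogous to Lemma \ref{LemmaBtoA}.

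The principal obstacle is precisely this last step on $\Levi^F$: producing an orbit representative $\chi_L$ that simultaneously satisfies the stabilizer factorization and the extendibility property. This relies on a fine description of the characters in the quasi-isolated block $e_s^{L^F}$ together with the explicit action of $\mathcal{A}$ through Jordan decomposition obtained in \cite{Jordan}, and constitutes the technical heart of the argument.
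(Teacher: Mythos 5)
The paper's proof is a direct citation: for groups not of type $D$ the statement is \cite[Theorem B]{Sternbedingung}, and for type $D$ it is \cite{TypeD}; these results already cover every semisimple $s$ of $\ell'$-order in full generality, so no preliminary reduction to the quasi-isolated case is performed.

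Your approach---transporting the problem through the $\mathcal{A}$-equivariant Morita equivalence to $\Levi^F$ and then settling the quasi-isolated case there---has a genuine gap. The entire content of the theorem ends up concentrated in your last step, which you describe only as ``direct analysis'' of the quasi-isolated series, ``typically by selecting an appropriate semisimple or regular-type character''; that step is precisely what the cited theorems prove, and you give no actual argument for it. Moreover, the reduction does not simplify the task: by construction $s$ is (strictly) quasi-isolated in $\Levi^\ast$, so Jordan decomposition relative to $\Levi$ provides no further reduction---that is exactly what quasi-isolated means---and the characters of $\Irr(L,e_s^L)$ are no more ``transparent'' than those of $\Irr(G,e_s^G)$. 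Worse, $\Levi^F$ is in general not the universal covering group of a simple group ($[\Levi,\Levi]$ is a product of simple simply connected factors, possibly permuted by $F$), so the results of the type cited do not apply to it directly; one would have to decompose $[\Levi,\Levi]^F$, track the induced action of $\mathcal{A}$ on each component, verify the desired stabilizer factorization and extendibility factor by factor, and reassemble---exactly the kind of work the paper carries out elsewhere (e.g.\ Proposition \ref{directproduct}) but avoids here by applying the known theorems to $G$ itself. In short, your reduction relocates the difficulty without resolving it.
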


\begin{proof}
For groups of Lie type not of type $D$ the statement is proved in \cite[Theorem B]{Sternbedingung}. For groups of type $D$ this was recently proved in \cite{TypeD}.
%
\end{proof}

In the following we abbreviate $M_L:= \mathrm{N}_L(Q)$, $\hat{M}:=\mathrm{N}_{G \mathcal{A}}(Q)$, $\hat{M}_L:=\mathrm{N}_{L \mathcal{A}}(Q)$ and $\tilde{M}_L:= \mathrm{N}_{\tilde{L}}(Q)$.

\begin{theorem}\label{reductionquasi}
	Let $b$ be a block idempotent of $\mathrm{Z}(\mathcal{O} G e_s^{G})$ and $c \in \mathrm{Z}(\mathcal{O} L e_s^{L})$ the block idempotent corresponding to $b$ under the Morita equivalence between $\mathcal{O} L e_s^{L}$ and $\mathcal{O} G e_s^{G}$ given by $H^{\mathrm{dim}(\Y_\U^\G)}_c( \Y_\U^\G, \mathcal{O}) e_s^L$. Assume that the following hold.
	\begin{enumerate}[label=(\roman*)]

		\item There exists an $\Irr( \tilde{M}_L / M_L) \rtimes \hat{M}_L$-equivariant bijection $\tilde{\varphi}: \Irr(\tilde{L} \mid \Irr_0(c) ) \to \Irr(\tilde{M}_L \mid \Irr_0(C_Q))$ such that it maps characters covering the character $\nu \in \Irr(\mathrm{Z}(\tilde{G}))$ to a character covering $\nu$.
		\item There exists an $\mathrm{N}_{\tilde{L} \mathcal{A}}(Q,C_Q)$-equivariant bijection 
		$\varphi:  \Irr_0(L , c) \to \Irr_0(M_L , C_Q)$ which satisfies the following two conditions:
		\begin{itemize}
			\item 
			If $\chi \in \Irr_0(L,c)$ extends to a subgroup $H$ of $L \mathcal{A}$ then $\varphi(\chi)$ extends to $\mathrm{N}_{H}(Q)$.
			\item $\tilde{\varphi}(\Irr(\tilde{L} \mid \chi))= \Irr(\tilde{M} \mid \varphi(\chi))$ for all $\chi \in \Irr_0(c)$.
		\end{itemize}
		\item For every $\theta \in \Irr_0(c)$ and $\tilde{\theta} \in \Irr(\tilde{L} \mid \theta)$ the following holds: If $\theta_0 \in \Irr(\tilde{L}_\theta \mid \theta)$ is the Clifford correspondent of $\tilde{\theta} \in \Irr(\tilde{L})$ then $\mathrm{bl}(\theta_0)=\mathrm{bl}(\theta_0')^{\tilde{L}_\theta}$, where $\theta'_0 \in \Irr(\tilde{M}_{\varphi(\theta)} \mid \varphi(\theta))$ is the Clifford correspondent of $\tilde{\varphi}(\tilde{\theta})$.
		
	\end{enumerate}
	Then the block $b$ is iAM-good.
\end{theorem}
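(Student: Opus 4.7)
My plan is to transport the hypothesized bijection $\varphi$ at the Levi level to one at the level of $\G^F$ using the global Bonnaf\'e--Rouquier Morita equivalence of Theorem \ref{intro2} together with its local (splendid Rickard) counterpart, and then verify each clause of the definition of a strong iAM-bijection, the key technical tool being Theorem \ref{12}. First I would apply Theorem \ref{intro2} once to $\G$ and once (via the local version of the same equivalence mentioned after Theorem \ref{intro2}) to $\mathrm{N}_\G(Q)$, yielding $\mathcal{A}$-equivariant Morita bijections
$\Irr(\tilde G\mathcal{A}, e_s^{G}) \leftrightarrow \Irr(\tilde L\mathcal{A}, e_s^{L})$ and
$\Irr(\mathrm{N}_{\tilde G\mathcal{A}}(Q), e_s^{G}\text{-cov.}) \leftrightarrow \Irr(\mathrm{N}_{\tilde L\mathcal{A}}(Q), e_s^{L}\text{-cov.})$
which preserve height zero, match $b$ with $c$ and $B_Q$ with $C_Q$, and preserve central characters on $\Z(\tilde G)$ by Lemma \ref{lemma2}.

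Composing these with the bijections $\tilde\varphi$ and $\varphi$ of hypotheses (i) and (ii), I obtain an $\mathrm{N}_{\tilde G\mathcal{A}}(Q,B_Q)$-equivariant bijection
$\Psi \colon \Irr_0(G,b) \to \Irr_0(M,B_Q)$
together with a compatible global bijection $\tilde\Psi \colon \Irr(\tilde G \mid \Irr_0(b)) \to \Irr(\tilde M \mid \Irr_0(B_Q))$ in the sense that $\tilde\Psi(\Irr(\tilde G \mid \chi)) = \Irr(\tilde M \mid \Psi(\chi))$ for every $\chi \in \Irr_0(G,b)$. Since \cite[Proposition 4.9]{Jordan} identifies the image of $\tilde G\mathcal{A}$ in $\Out(G)$ with the stabilizer of $e_s^G$ (which contains $\Gamma := \mathrm{N}_{\Aut(G)}(D,b)$), the $\mathrm{N}_{\tilde G\mathcal{A}}(Q,B_Q)$-equivariance upgrades to $\Gamma$-equivariance. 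In the untwisted $D_4$ case I would substitute for $\mathcal{A}$ the group $\mathcal{C} \subseteq \mathcal{B}$ provided by Corollary \ref{explicitD4} and use the refined analysis of Section \ref{caseD4}, notably Lemma \ref{LemmaBtoA}, to still reach the same conclusion.

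The heart of the argument is verifying, for each $\chi \in \Irr_0(G,b)$ and $\chi' := \Psi(\chi)$, the block-isomorphism relation
$((\tilde G\mathcal{B})_\chi/Z, G/Z, \overline{\chi}) \geq_b ((\tilde M\mathrm{N}_{G\mathcal{B}}(Q))_{\chi'}/Z, M/Z, \overline{\chi'})$
with $Z = \Ker(\chi) \cap \Z(G)$, after which the Butterfly Theorem \ref{Butterfly} delivers the required triple relation with $\Gamma_\chi$ in place of these specific overgroups. I would do this by applying Theorem \ref{12}. Hypothesis (i) is secured by choosing a $\tilde G$-conjugate of $\chi$ inside its orbit via Theorem \ref{starcondition}; hypothesis (ii) follows from the extendibility clause of our (ii) combined with Lemma \ref{AEloc} (respectively Lemma \ref{LemmaBtoA} in type $D_4$) to translate $\mathcal{A}$-extendibility into $\mathcal{B}$-extendibility; hypothesis (iii) is a direct consequence of the $\Gamma$-equivariance of $\Psi$ together with (i); hypothesis (iv) is obtained by picking $\tilde\chi \in \Irr(\tilde G_\chi \mid \chi)$ and $\tilde\chi' \in \Irr(\tilde M_{\chi'} \mid \chi')$ as Clifford correspondents of characters matched under $\tilde\Psi$, whereupon the twist identity ${\tilde\chi}^m = \nu\tilde\chi$, ${\tilde\chi'}^m = \mathrm{Res}(\nu)\tilde\chi'$ is exactly the $\Irr(\tilde{M}_L/M_L) \rtimes \hat M_L$-equivariance of $\tilde\varphi$ transported through the Morita equivalences, and the central-character compatibility comes from Lemma \ref{lemma2} and the central-character clause of (i). Hypothesis (v) is handled by Lemma \ref{Blockinduction2} once I know $\mathrm{bl}(\tilde\chi) = \mathrm{bl}(\tilde\chi')^{\tilde G_\chi}$, and this latter identity reduces, via the block-preserving nature of the Morita equivalences applied to the Clifford correspondents, to exactly hypothesis (iii) of our statement.

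The main obstacle I anticipate is the transport of hypothesis (iii): block induction through arbitrary intermediate groups $L \leq J \leq \tilde L_\theta$ at the Levi level needs to be propagated through the Morita-equivalence bimodule to arbitrary intermediate groups $G \leq J \leq \tilde G_\chi$, which requires the bimodule to be compatible with the Clifford theory at all these levels simultaneously; this is precisely what the equivariance of Theorem \ref{intro2} is designed to provide, but keeping careful track of the Clifford correspondents and their inertia groups is delicate. The second delicate point is the type $D_4$ case, where $\mathcal{A}$ does not surject onto the full stabilizer of $e_s^G$ in $\Out(G)$, which forces one to replace several extendibility arguments by the Sylow-by-Sylow criteria of Section \ref{caseD4}.
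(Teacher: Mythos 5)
Your proposal follows essentially the same architecture as the paper's proof: define $\Psi$ by transporting $\varphi$ through the global and local equivalences coming from \cite{Jordan}, normalise $\chi$ inside its $\tilde{G}$-orbit via Theorem \ref{starcondition}, verify the hypotheses of Theorem \ref{12} one by one (with the $D_4$ subtleties handled via Lemma \ref{LemmaBtoA} and the Sylow-by-Sylow analysis of Section \ref{caseD4}), and finish with the Butterfly Theorem. The one caveat is that the step you explicitly flag as the ``main obstacle''—propagating hypothesis (iii) through all intermediate groups $G \leq J \leq \tilde{G}_\chi$ so that Lemma \ref{Blockinduction2} applies—is exactly where the paper invests a substantial Lemma \ref{intermediatesubgroup} (constructing the local Rickard complex $\tilde{C}_{\mathrm{loc}}$, computing its reduction mod $\ell$ via the Brauer construction, and invoking the Harris--Kn\"orr correspondence), and your sketch acknowledges this as delicate without actually carrying it out.
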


\begin{proof}
	By \cite[Theorem 1.5(2)]{Broue3} and \cite[Theorem 3.1]{Broue3} it follows that derived equivalences between blocks of group algebras induce character bijections which preserve the height of corresponding characters. Hence, by \cite[Theorem 3.10]{Jordan} we obtain bijections 
	$$R_{\Levi}^{\G}: \Irr_0(L,c) \to \Irr_0(G,b) \text{ and } R_{\mathrm{N}_{\Levi}(Q)}^{\mathrm{N}_{\G}(Q)}: \Irr_0(\mathrm{N}_L(Q),C_Q) \to \Irr_0(\mathrm{N}_G(Q),B_Q).$$
	The inverse of these bijections is then denoted by ${}^\ast R_{\Levi}^{\G}$ and ${}^\ast R_{\mathrm{N}_{\Levi}(Q)}^{\mathrm{N}_{\G}(Q)}$ respectively.

	We define $\Psi:\Irr_0(G,b) \to \Irr_0(\mathrm{N}_G(Q), B_Q)$ to be the bijection which makes the following diagram commutative:
	\begin{center}
		\begin{tikzpicture}
		\matrix (m) [matrix of math nodes,row sep=3em,column sep=4em,minimum width=2em] {
			
			\Irr_0(L, c) & \Irr_0(G, b)  \\
			\Irr_0(M_L,C_Q) & \Irr_0(M,B_Q) 
			\\};
		\path[-stealth]
		(m-1-2) edge node [left] {$\Psi$} (m-2-2)
		(m-1-1) edge node [above] {$R_{\Levi}^{\G}$} (m-1-2)
		(m-2-1) edge node [above] {$R_{\mathrm{N}_{\Levi}(Q)}^{\mathrm{N}_{\G}(Q)}$} (m-2-2)
		(m-1-1) edge node [left] {$\varphi$} (m-2-1);
		
		\end{tikzpicture}
	\end{center}
	Note that the bijection $R_{\Levi}^{\G}: \Irr_0(L,c) \to \Irr_0(G,b)$ is $\mathrm{N}_{\tilde{L} \mathcal{A}}(c)$-equivariant and the bijection $R_{\mathrm{N}_{\Levi}(Q)}^{\mathrm{N}_{\G}(Q)}: \Irr_0(\mathrm{N}_{L}(Q),C_Q) \to \Irr_0(\mathrm{N}_{G}(Q),B_Q)$ is $\mathrm{N}_{\tilde{L} \mathcal{A}}(Q,C_Q)$-equivariant by \cite[Lemma 2.23]{Jordan}. We see that $\mathrm{N}_G(Q) \mathrm{N}_{\tilde{L} \mathcal{A}}(Q,C_Q)= \mathrm{N}_{\tilde{G} \mathcal{A}}(Q,B_Q)$ and  consequently the bijection $\Psi:\Irr_0(G,b) \to \Irr_0(M,B_Q)$ is $\mathrm{N}_{\tilde{G} \mathcal{A}}(Q,B_Q)$-equivariant.
	
	Fix a character $\chi \in \Irr_0(G, b)$. For $l \in {\tilde{L}}$ we note that the block ${}^l b$ of $\mathcal{O} \G^F e_s^{\G^F}$ also satisfies the assumptions of the theorem with the map $\varphi$ replaced by $\varphi': \Irr_0(L,{}^l c) \to \Irr_0(\mathrm{N}_L({}^l Q),{}^l C_Q)$ given by $\varphi'(\theta)= {}^l \varphi({}^{l^{-1}} \theta)$ for $\theta \in \Irr_0(L,{}^l c)$.
	Using Theorem \ref{starcondition} we can, by possibly taking a $\tilde{G}$-conjugate of $b$, assume that the character $\chi$ satisfies assumption (i) of Theorem \ref{12}. We denote $\chi' := \Psi(\chi)$ and show that the characters $\chi$ and $\chi'$ satisfy the conditions of Theorem \ref{12}.
	
	Since the bijection $\Psi:\Irr_0(G,b) \to \Irr_0(M,B_Q)$ is $\mathrm{N}_{\tilde{G} \mathcal{A}}(Q,B_Q)$-equivariant we deduce that condition (iii) in Theorem \ref{12} is satisfied and we have 
	$$( \tilde{M} \mathrm{N}_{G \mathcal{B}}(Q))_{\chi'}= \tilde{M}_{\chi'} \mathrm{N}_{G \mathcal{B}}(Q)_{\chi'}.$$
	
	To show condition (ii) in Theorem \ref{12} let us first assume that $\G^F$ is not of type $D_4$. Since Assumption \ref{starcondition} holds for the character $\chi$ it follows by Lemma \ref{AE} that the character $\chi$ extends to its inertia group in $G \mathcal{A}$. It follows by \cite[Theorem 5.8]{Jordan} that ${}^\ast R_L^G(\chi)$ extends to its inertia group in $L \mathcal{A}$. By assumption (ii), the character $\varphi({}^\ast R_{\Levi}^{\G}(\chi))$ extends to its inertia group in $\hat{M}_L$. Hence, by \cite[Theorem 5.11]{Jordan} the character $\chi'$ extends to $\hat{M}_{\chi'}$. Now Lemma \ref{AEloc} shows that condition (ii) in Theorem \ref{12} is satisfied.
	
	Now assume that $\G^F$ is of type $D_4$. We use the notation of Section \ref{caseD4} and show the following result.
	
	\begin{proposition}\label{forThm618}
		A character $\chi \in  \Irr( \mathrm{N}_{G \mathcal{B}}(Q)_{\chi} \mid B_Q)$ extends to $\mathrm{N}_{G\mathcal{B}}(Q)_\chi$ if and only if for all $r \in I$ the character $\psi:={}^\ast R_{\mathrm{N}_{\Levi}(Q)}^{\mathrm{N}_{\G}(Q)}( \chi)$ extends to $\mathrm{N}_{L \mathcal{R}_{\mathcal{A}}}(Q)_\psi$ for every Sylow $r$-subgroup $\mathcal{R}$ of $\mathcal{C}$.
	\end{proposition}
	
	\begin{proof}
		By Lemma \ref{LemmaBtoA} the character $\chi$ extends to $\mathrm{N}_{G\mathcal{B}}(Q)_\chi$ if and only if $\chi$ extends for every $r\in I$ to $\mathrm{N}_{G \mathcal{R}_{\mathcal{A}}}(Q)_\chi$ for every Sylow $r$-subgroup $\mathcal{R}$ of $\mathcal{C}$.
		
		Now for $r \in I$ recall that $\mathcal{R}_{\mathcal{A}}= \langle \sigma, F_r \rangle$ for some bijective morphism $\sigma$ of $\Gtilde$. Thus, we can apply \cite[Theorem 5.11]{Jordan} to the commuting automorphisms $\sigma: \Gtilde \to \Gtilde$ and $F_r: \Gtilde \to \Gtilde$. By \cite[Remark 1.8(b)]{Jordan} extendibility of characters is preserved in this situation. This implies that the character $\chi$ extends to $\mathrm{N}_{G \mathcal{R}_{\mathcal{A}}}(Q)_\chi$ if and only if $\psi$ extends to $\mathrm{N}_{L \mathcal{R}_{\mathcal{A}}}(Q)_\psi$.
	\end{proof}

	 Since the character $\chi$ satisfies assumption (i) of Theorem \ref{12} we know by Proposition \ref{forThm618} that $\psi:={}^\ast R_{L}^{G}( \chi)$ extends to $(G \mathcal{R}_{\mathcal{A}})_\psi$ for every Sylow $r$-subgroup $\mathcal{R}$ of $\mathcal{C}$ with $r \in I$. By assumption (ii), it follows that $\varphi(\psi)$ extends to $\mathrm{N}_{L \mathcal{R}_{\mathcal{A}}}(Q)_\psi$ for every Sylow $r$-subgroup $\mathcal{R}$ of $\mathcal{C}$ with $r \in I$. Applying Proposition \ref{forThm618} again yields that $\chi'$ extends to its inertia group in $\mathrm{N}_{G \mathcal{B}}(Q)$. Thus, condition (ii) in Theorem \ref{12} is also satisfied in this case.
	
	To verify condition (v) of Theorem \ref{12} we show the following lemma.

	\begin{lemma}\label{intermediatesubgroup}
		There exist characters $\tilde{\chi}_0 \in \Irr(\tilde{G}_\chi \mid \chi)$ and $\tilde{\chi}'_0 \in \Irr(\tilde{M}_{\chi'} \mid \chi')$ satisfying $\mathrm{bl}(\tilde{\chi}'_0)^{\tilde{G}_{\chi}}=\mathrm{bl}(\tilde{\chi}_0)$.
	\end{lemma}
	\begin{proof}
		Let $\tilde{\chi}_0$ be a character of $J:=\tilde{G}_{\chi}$ extending $\chi$. Let $J_L$ be the subgroup of $\tilde{L}/L$ corresponding to $J$ under the natural isomorphism $\tilde{L} / L \cong \tilde{G} / G$.

		Recall that $C:=G \Gamma_c( \Y_\U^\G,\mathcal{O})^{\mathrm{red}}e_s^{\Levi^F}$ is a complex of $\mathcal{O}[(G \times L^{\mathrm{opp}}) \Delta(\tilde{L})]$-modules. Moreover by \cite[Proposition 1.1]{Godement}, we have a canonical isomorphism
		$$\mathrm{Ind}_{( G \times L^{\mathrm{opp}}) \Delta (\tilde{L})}^{\tilde{G} \times \tilde{L}^{\mathrm{opp}}} G \Gamma_c( \Y_\U^\G,\mathcal{O}) e_s^{L} \cong G \Gamma_c( \Y_\U^{\tilde{\G}},\mathcal{O}) e_s^L$$
		in $\mathrm{Ho}^b(\mathcal{O} [\tilde{G} \times \tilde{L}^{\mathrm{opp}}])$. By \cite[Lemma 7.4]{Dat}, the complex $G \Gamma_c( \Y_\U^{\tilde{\G}},\mathcal{O}) e_s^L$ induces a splendid Rickard equivalence between $\mathcal{O} \tilde{G} e_s^{G}$ and $\mathcal{O} \tilde{L} e_s^{L}$.
		Thus, the complex $\tilde{C}:=\mathrm{Ind}_{( G \times L^{\mathrm{opp}}) \Delta(J_L)}^{J \times J_L^{\mathrm{opp}}} (G \Gamma_c( \Y_\U^\G,\mathcal{O})^{\mathrm{red}}) c$ induces a splendid Rickard equivalence between $\mathcal{O} J b$ and $\mathcal{O} J_L c$, see \cite[Lemma 1.9]{Jordan}. Denote $\tilde{b}:= \mathrm{bl}(\tilde{\chi})$ and let $\tilde{c}$ be the block corresponding to $\tilde{b}$ under the Rickard equivalence induced by $\tilde{C}$. The cohomology of $C$ is concentrated in degree $d:= \mathrm{dim}(\Y_\U^\G)$ and $H^d(\tilde{C}) \cong  \mathrm{Ind}_{( G \times L^{\mathrm{opp}}) \Delta J_L}^{J \times J_L^{\mathrm{opp}}} H^d(C)$. By \cite[Theorem 1.7]{Jordan} the bimodule $H^d(\tilde{C})$ induces a Morita equivalence between $\mathcal{O} J_L c$ and $\mathcal{O} J b$. We denote by 
		$$R:\Irr(J_L , c) \to \Irr(J , b)$$
		the associated bijection between irreducible characters and its inverse by ${}^\ast R$.
		
		
		The complex $\mathrm{Ind}_{(\mathrm{C}_{J}(Q) \times \mathrm{C}_{J_L}(Q)^{\mathrm{opp}}) \Delta( \mathrm{N}_{J_L}(Q))}^{\mathrm{N}_{J}(Q) \times \mathrm{N}_{J_L}(Q)^{\mathrm{opp}}}(\mathrm{Br}_{\Delta Q}(\tilde{C} ))$ induces a derived equivalence between the algebras $k \mathrm{N}_{J}(Q) B_Q$ and $k \mathrm{N}_{J_L}(Q) C_Q$, see \cite[Proposition 1.12]{Jordan}. Denote
		$$\tilde{C}_{\mathrm{loc}}:=\mathrm{Ind}_{(\mathrm{C}_G(Q) \times \mathrm{C}_L(Q)^{\mathrm{opp}}) \Delta( \mathrm{N}_{\tilde{L}}(Q))}^{( \mathrm{N}_{J}(Q) \times \mathrm{N}_{J_L}(Q)^{\mathrm{opp}} )\Delta( \mathrm{N}_{\tilde{L}}(Q)) }  (G \Gamma_c( \Y_{\C_\U(Q)}^{\mathrm{C}_\G(Q)}, \mathcal{O})) C_Q.$$
		As in the proof of \cite[Lemma 1.20]{Jordan} it follows that 
		$$\tilde{C}_{\mathrm{loc}} \otimes_{\mathcal{O}} k \cong \mathrm{Ind}_{\mathrm{C}_{J}(Q) \times \mathrm{C}_{J_L}(Q)^{\mathrm{opp}} \Delta( \mathrm{N}_{J_L}(Q))}^{\mathrm{N}_{J}(Q) \times \mathrm{N}_{J_L}(Q)^{\mathrm{opp}}}(\mathrm{Br}_{\Delta Q}(\tilde{C} )) C_Q$$
		in $\mathrm{Ho}^b(k[ \mathrm{N}_{J}(Q) \times \mathrm{N}_{J_L}(Q)^{\mathrm{opp}}])$.
		
		
		The cohomology of $G \Gamma_c( \Y_{\C_\U(Q)}^{\C_\G(Q)}, \mathcal{O}) \mathrm{br}_Q(e_s^{\Levi^F})$ is concentrated in degree $d_Q:=\mathrm{dim}(\Y_{\C_\U(Q)}^{\C_\G(Q)})$ by \cite[Proposition 4.11]{Dat}. Moreover, the bimodule $H_c^{d_Q}(\Y_{\C_\U(Q)}^{\C_\G(Q)},\mathcal{O})c_Q$ induces a Morita equivalence between $\mathcal{O}\C_{L}(Q) c_Q$ and $\mathcal{O}\C_{G}(Q) b_Q$. Now \cite[Lemma 1.11]{Jordan} together with \cite[Corollary 1.6]{Jordan} imply that $H^{d_Q}(\tilde{C}_{\mathrm{loc}})$ induces a Morita equivalence between $\mathcal{O} \mathrm{N}_{J_L}(Q) C_Q$ and $\mathcal{O} \mathrm{N}_{J}(Q)B_Q$. We denote the associated character bijection by $R_{\mathrm{loc}}:\Irr(\mathrm{N}_{J_L}(Q) , C_Q) \to \Irr(\mathrm{N}_J(Q), B_Q)$ and its inverse by ${}^\ast R_{\mathrm{loc}}:\Irr(\mathrm{N}_{J}(Q) , B_Q) \to \Irr(\mathrm{N}_{J_L}(Q), C_Q)$. Let $\tilde{\chi}= \mathrm{Ind}_{J}^{\tilde{G}}(\tilde{\chi}_0)$ and define
		$$\tilde{\chi}':=R_{\mathrm{N}_{\tilde{\Levi}}(Q)}^{\mathrm{N}_{\tilde{\G}}(Q)} \circ \tilde{\varphi} \circ {}^\ast R_{\tilde{\Levi}}^{\tilde{\G}} (\tilde{\chi}).$$
		By construction, $\tilde{\chi}' \in \Irr( \mathrm{N}_{\tilde{G}}(Q) \mid \chi')$. We let $\tilde{\chi}_0' \in \Irr( \tilde{M}_{\chi'})$ be the unique character covering $\chi'$ with $\mathrm{Ind}_{\tilde{M}_{\chi'}}^{\tilde{M}}(\tilde{\chi}'_0)= \tilde{\chi}'_0$. Let $\theta:={}^\ast R_{\Levi}^{\G}(\chi)$ and $\tilde{\theta}:= {}^\ast R_{\tilde{\Levi}}^{\tilde{\G}}(\tilde{\chi})$. We have
		$$\mathrm{Ind}_{\mathrm{N}_{J}(Q) \times \mathrm{N}_{J_L}(Q)^{\mathrm{opp}} \Delta( \mathrm{N}_{\tilde{L}}(Q))}^{\mathrm{N}_{\tilde{G}}(Q) \times \mathrm{N}_{\tilde{L}}(Q)^{\mathrm{opp}}} H^{d_Q}( \tilde{C}_{\mathrm{loc}}) \cong H_c^{\mathrm{dim}}(\Y_{\mathrm{N}_\U(Q)}^{\mathrm{N}_{\tilde{\G}}(Q)}, \mathcal{O}) \mathrm{Tr}_{\mathrm{N_{J_L}(Q)}}^{\mathrm{N}_{\tilde{L}}(Q)}(C_Q).$$ 
		Hence, by \cite[Remark 1.8]{Jordan} we obtain
		$$\mathrm{Ind}_{\mathrm{N}_{J_L}(Q)}^{\mathrm{N}_{\tilde{L}}(Q)}({}^\ast R_{\mathrm{loc}}(\tilde{\chi}_0')) =
		{}^\ast R_{\mathrm{N}_{\tilde{\Levi}}(Q)}^{\mathrm{N}_{\tilde{\G}}(Q)}(\Ind_{\mathrm{N}_J(Q)}^{\mathrm{N}_{\tilde{G}}(Q)}( \tilde{\chi}_0'))=
		{}^\ast R_{\mathrm{N}_{\tilde{\Levi}}(Q)}^{\mathrm{N}_{\tilde{\G}}(Q)}(\tilde{\chi}')= \tilde{\varphi}({}^\ast R_{\tilde{\Levi}}^{\tilde{\G}}( \tilde{\chi}))= \tilde{\theta}.$$
		Thus, ${}^\ast R_{\mathrm{loc}}(\tilde{\chi}_0')\in \Irr(\mathrm{N}_{J_L}(Q) \mid \varphi(\theta))$ is the Clifford correspondent of $\tilde{\varphi}(\tilde{\theta})$. Consequently, we have 
		$$\mathrm{bl}({}^\ast R_{\mathrm{loc}}(\tilde{\chi'_0}))^{J_L}=\mathrm{bl}({}^\ast R(\tilde{\chi}_0)) $$
		by assumption (iii).
		
		Moreover, the bimodule $H^{d_Q}(\tilde{C}_{\mathrm{loc}}) \mathrm{bl}({}^\ast R_{\mathrm{loc}}(\tilde{\chi}'_0))$ induces a Morita equivalence between the blocks $\mathcal{O} \mathrm{N}_{J}(Q) \mathrm{bl}( \tilde{\chi}'_0)$ and $\mathcal{O} \mathrm{N}_{J_L}(Q) \mathrm{bl}({}^\ast R_{\mathrm{loc}}(\tilde{\chi}'_0))$. On the other hand, as shown above, we have
		$$\tilde{C}_{\mathrm{loc}} \otimes_{\mathcal{O}} k \cong \mathrm{Ind}_{\mathrm{C}_{J}(Q) \times \mathrm{C}_{J_L}(Q)^{\mathrm{opp}} \Delta( \mathrm{N}_{J_L}(Q))}^{\mathrm{N}_{J}(Q) \times \mathrm{N}_{J_L}(Q)^{\mathrm{opp}}}(\mathrm{Br}_{\Delta Q}(\tilde{C} ))$$
		in $\mathrm{Ho}^b(k[ \mathrm{N}_{J}(Q) \times \mathrm{N}_{J_L}(Q)^{\mathrm{opp}}])$.  Since $\mathrm{bl}({}^\ast R_{\mathrm{loc}}(\tilde{\chi'_0}))$ is the Harris--Kn\"orr correspondent of $\mathrm{bl}({}^\ast R(\tilde{\chi}_0))$, it follows by \cite[Remark 1.21]{Jordan} that $\mathrm{bl}(\tilde{\chi}'_0)$ is the Harris--Kn\"orr correspondent of $\mathrm{bl}(\tilde{\chi}_0)$. In other words we have $\mathrm{bl}(\tilde{\chi}'_0)^J=\mathrm{bl}(\tilde{\chi}_0)$.
	\end{proof}
	
	We now complete the proof of Theorem \ref{reductionquasi}.
	Lemma \ref{intermediatesubgroup} together with Lemma \ref{Blockinduction2} implies that condition (v) in Lemma \ref{12} is satisfied.
	
	From Lemma \ref{lemma1} and assumption (i) it follows that the first part of condition (iv) in Theorem \ref{12} is satisfied. Moreover, Lemma \ref{lemma2} and assumption (i) imply that the second part of condition (iv) in Theorem \ref{12} is satisfied. Therefore, all conditions in Theorem \ref{12} are satisfied. Consequently, we have
	$$(( \tilde{G} \mathcal{B})_\chi /Z, G/Z, \overline{\chi}) \geq_b ((\tilde{M} \mathrm{N}_{G \mathcal{B}}(Q))_{\chi'} / Z,\mathrm{N}_G(Q), \overline{\chi'}),$$
	where $\mathrm{Z}:= \mathrm{Z}(G) \cap \mathrm{Ker}(\chi)$.
	By the Butterfly Theorem, see Theorem \ref{Butterfly}, it follows that $\Psi:\Irr_0(G,b) \to \Irr_0(M,B_Q)$ is a strong iAM-bijection. Consequently, the block $b$ is iAM-good.
\end{proof}

%

\section{Reduction to quasi-isolated blocks}

\subsection{Strictly quasi-isolated blocks}\label{sectionquasi}

Let $\mathbf{H}$ be a connected reductive group with Frobenius $F: \mathbf{H} \to \mathbf{H}$ and dual group $\mathbf{H}^\ast$. Recall that an element $t \in \mathbf{H}^\ast$ is called \textit{quasi-isolated} if $\mathrm{C}_{\G^\ast}(t)$ is not contained in a proper Levi subgroup of $\mathbf{H}^\ast$. In view of Theorem \ref{BDRintro} it makes sense to consider a slightly weaker notion:


\begin{definition}
	We say that a semisimple element $t \in (\mathbf{H}^\ast)^F$ is \textit{strictly quasi-isolated} if $\mathrm{C}_{(\mathbf{H}^\ast)^{F^\ast}}(s) \mathrm{C}_{\mathbf{H}^\ast}^\circ(s)$ is not contained in a proper Levi subgroup of $\mathbf{H}^\ast$.
\end{definition}

We say that a block $b$ of $\mathbf{H}^F$ is \textit{strictly quasi-isolated} if it occurs in $\mathcal{O} \mathbf{H}^F e_t^{\mathbf{H}^F}$ for a strictly quasi-isolated semisimple element $t \in (\mathbf{H}^\ast)^{F^\ast}$ of $\ell'$-order.

The following remark gives the generic structure of defect groups of blocks of groups of Lie type:

\begin{remark}
	Recall that for $\ell \geq 5$ (and $\ell \geq 7$ if $\G$ is of type $E_8$) we know by \cite[Lemma 4.16]{Marc} that every defect group $D$ of a block $b$ of $\mathbf{H}^F$ has a unique maximal abelian normal subgroup $Q$ such that the group extension
	$$1 \to Q \to D \to D/Q \to 1$$
	splits. One can now show that $\N_{\mathbf{H}^F}(Q)$ is always a proper subgroup of $\mathbf{H}^F$ unless $D$ is central in $\mathbf{H}^F$. Evidence suggests that working with $Q$ is more accessible than working with the defect group and the iAM-condition is usually checked by working with $\mathrm{N}_{\mathbf{H}^F}(Q)$.
\end{remark}

Following the terminology in \cite[Section 3.4]{Cabanesgroup} we say that an $\ell$-group $D$ is \textit{Cabanes} if it has a unique maximal abelian normal subgroup. This motivates the following hypothesis:

\begin{hypothesis}\label{assumptionquasi}
	Consider the class $\mathcal{H}_{\G}$ of pairs $( \mathbf{H},F')$ consisting of a simple algebraic group $\mathbf{H}$ of simply connected type over $\overline{\mathbb{F}_p}$ with Frobenius $F': \mathbf{H} \to \mathbf{H}$ such that the Dynkin diagram of $\mathbf{H}$ is isomorphic to a subgraph of the Dynkin diagram of $\G$. Assume that for $(\G,F)$ and $\ell$ one of the following holds:
	\begin{enumerate}[label=(\alph*)]
		\item We have $\ell \geq 5$ (and $\ell \geq 7$ if $\G$ is of type $E_8$).  Let $( \mathbf{H},F') \in \mathcal{H}_{\G}$ and $b$ a strictly quasi-isolated block of $\mathbf{H}^{F'}$. If $b$ has a non-central defect group $D$ and $\mathbf{H}^{F'}/ \mathrm{Z}(\mathbf{H}^{F'})$ is an abstract simple group then there exists an iAM-bijection $\Psi: \Irr_0(\mathbf{H}^{F'},b) \to \Irr_0(\mathrm{N}_{\mathbf{H}^{F'}}(Q), B_Q)$, where $Q$ is the maximal normal abelian subgroup of $D$.
		\item Let $( \mathbf{H},F') \in \mathcal{H}_{\G}$ and $b$ a strictly quasi-isolated block of $\mathbf{H}^{F'}$. If $b$ has a non-central defect group $D$ and $\mathbf{H}^{F'}/ \mathrm{Z}(\mathbf{H}^{F'})$ is an abstract simple group then there exists an iAM-bijection $\Psi: \Irr_0(\mathbf{H}^{F'},b) \to \Irr_0(\mathrm{N}_{\mathbf{H}^{F'}}(D), B_D)$.
	\end{enumerate}

\end{hypothesis}

\begin{remark}\label{solvable}
Note that in very small cases the group $\mathbf{H}^{F'}$ can be almost simple or solvable, see \cite[Theorem 24.17]{MT} for a list of these exceptions. In the case where $\mathbf{H}^{F'}$ is almost simple and occurs as a group in Hypothesis \ref{assumptionquasi} this list shows that necessarily $\mathbf{H}^{F'} \cong \mathrm{Sp}_4(2) \cong S_6$.
\end{remark}

%

In the proof of Proposition \ref{directproduct} we will use the following property of Cabanes groups in an essential way (see \cite[Lemma 3.11]{Cabanesgroup} for a slightly more general statement):

\begin{lemma}\label{Cabanesgroup}
	Let $P:=P_1 \times P_2$, where $P_1$ and $P_2$ are both Cabanes. Then $P$ is Cabanes with maximal normal abelian subgroup $A_1 \times A_2$ where $A_i$, $i=1,2$, is the maximal normal abelian subgroup of $P_i$.
\end{lemma}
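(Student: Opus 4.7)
The plan is to establish the lemma in two steps. First, I would verify that $A := A_1 \times A_2$ is itself a normal abelian subgroup of $P$: normality follows componentwise from $A_i \lhd P_i$, and a direct product of abelian groups is abelian. Second --- and this is where the content lies --- I would show that every normal abelian subgroup $N$ of $P$ must lie in $A$, using the two projection homomorphisms $\pi_i \colon P \to P_i$.

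The key observation will be that each $\pi_i(N)$ is a normal abelian subgroup of $P_i$. Being abelian is immediate as a homomorphic image of the abelian group $N$. For normality I would use the direct product hypothesis: identify $p_i \in P_i$ with $(p_i, 1) \in P$, so that $p_i N p_i^{-1} = N$ inside $P$, and then apply $\pi_i$ to obtain $p_i \pi_i(N) p_i^{-1} = \pi_i(N)$ inside $P_i$. The Cabanes hypothesis on $P_i$ now forces $\pi_i(N) \subseteq A_i$, whence
$$ N \subseteq \pi_1(N) \times \pi_2(N) \subseteq A_1 \times A_2 = A,$$
as required.

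There is no substantive obstacle in this proof; the only subtlety worth flagging is the need to verify $\pi_i(N) \lhd P_i$ rather than merely $\pi_i(N) \leq P_i$, and this step is exactly where the direct product hypothesis (as opposed to a normal extension or a semidirect product) is used. The same argument passes verbatim to finite direct products by iteration, which matches the more general form referenced from \cite[Lemma 3.11]{Cabanesgroup}.
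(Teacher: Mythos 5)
Your argument is correct. Note that the paper does not actually supply a proof of this lemma at all; it simply points the reader to \cite[Lemma 3.11]{Cabanesgroup} for a more general statement, so there is nothing in the text to compare against. Your two-step structure --- checking that $A_1 \times A_2$ is normal and abelian, then using the two coordinate projections $\pi_i$ to show that every normal abelian subgroup $N \leq P$ satisfies $N \subseteq \pi_1(N) \times \pi_2(N) \subseteq A_1 \times A_2$ --- is the natural proof and is complete, granting the standard fact that in a finite group every abelian normal subgroup is contained in some maximal one (so uniqueness of $A_i$ really does force $\pi_i(N) \subseteq A_i$). One small remark on your discussion of the "subtlety": normality of $\pi_i(N)$ in $P_i$ is simply the general statement that a surjective homomorphism carries normal subgroups to normal subgroups, and does not itself need the direct-product structure; where the direct-product hypothesis genuinely enters is one step earlier, in ensuring that \emph{both} projections $\pi_1, \pi_2$ are group homomorphisms (in a semidirect product only the quotient map is). This does not affect the correctness of your argument, but locates the use of the hypothesis more precisely.
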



%
We keep the notation of Theorem \ref{reductionquasi}.
We aim to understand blocks of $[\Levi,\Levi]^F$ which are covered by $c$. For this we need the following slight generalization of \cite[Proposition 2.3]{Bonnafe}:

\begin{lemma}\label{Bonnafe}
	Suppose that $\tilde{\mathbf{H}}$ and $\mathbf{H}$ are connected reductive groups with Frobenius endomorphism $F$. Let $\pi: \tilde{\mathbf{H}} \to \mathbf{H}$ be a surjective $F$-equivariant morphism with central kernel. If $\tilde{t} \in \tilde{\mathbf{H}}^F$ is strictly quasi-isolated  then its image $t:=\pi(\tilde{t})$ is strictly quasi-isolated as well.
\end{lemma}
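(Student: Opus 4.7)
The plan is to prove the contrapositive: if $t = \pi(\tilde{t})$ is not strictly quasi-isolated in $\mathbf{H}$, then $\tilde{t}$ is not strictly quasi-isolated in $\tilde{\mathbf{H}}$. So suppose there is a proper Levi subgroup $\mathbf{L}$ of $\mathbf{H}$ with
$$\mathrm{C}_{\mathbf{H}^F}(t)\,\mathrm{C}_{\mathbf{H}}^\circ(t) \subseteq \mathbf{L}.$$
The natural candidate for a Levi of $\tilde{\mathbf{H}}$ containing $\mathrm{C}_{\tilde{\mathbf{H}}^F}(\tilde{t})\,\mathrm{C}_{\tilde{\mathbf{H}}}^\circ(\tilde{t})$ is $\tilde{\mathbf{L}} := \pi^{-1}(\mathbf{L})$, and the whole argument rests on this construction.

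First I would record that $\tilde{\mathbf{L}}$ is indeed a proper Levi of $\tilde{\mathbf{H}}$. Since $\pi$ has central kernel, $\ker(\pi) \subseteq \mathrm{Z}(\tilde{\mathbf{H}})$ is contained in every Borel, and the preimage of a parabolic is a closed subgroup of the same codimension (the quotient $\tilde{\mathbf{H}}/\pi^{-1}(\mathbf{P}) \cong \mathbf{H}/\mathbf{P}$ is complete), so preimages of parabolics are parabolics; taking intersections of opposite parabolics, preimages of Levis are Levis. Properness and $F$-stability are immediate from $\pi$ being surjective and $F$-equivariant. Next, for the centralizer side, observe that $F$-equivariance of $\pi$ gives $\pi(\tilde{\mathbf{H}}^F) \subseteq \mathbf{H}^F$, so for any $g \in \mathrm{C}_{\tilde{\mathbf{H}}^F}(\tilde{t})$, the image $\pi(g)$ lies in $\mathbf{H}^F$ and centralizes $\pi(\tilde{t}) = t$; hence $\pi(g) \in \mathrm{C}_{\mathbf{H}^F}(t) \subseteq \mathbf{L}$, so $g \in \tilde{\mathbf{L}}$. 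For the connected part, $\pi\bigl(\mathrm{C}_{\tilde{\mathbf{H}}}^\circ(\tilde{t})\bigr)$ is a connected subgroup of $\mathrm{C}_{\mathbf{H}}(t)$, hence contained in $\mathrm{C}_{\mathbf{H}}^\circ(t) \subseteq \mathbf{L}$, giving $\mathrm{C}_{\tilde{\mathbf{H}}}^\circ(\tilde{t}) \subseteq \tilde{\mathbf{L}}$.

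Combining these two inclusions yields $\mathrm{C}_{\tilde{\mathbf{H}}^F}(\tilde{t})\,\mathrm{C}_{\tilde{\mathbf{H}}}^\circ(\tilde{t}) \subseteq \tilde{\mathbf{L}}$, contradicting the assumption that $\tilde{t}$ is strictly quasi-isolated. Honestly, there is no real obstacle here: the argument is a direct transcription of Bonnaf\'e's proof of \cite[Proposition 2.3]{Bonnafe} with the finite-fixed-points factor $\mathrm{C}_{(\cdot)^F}(\cdot)$ included alongside $\mathrm{C}^\circ_{(\cdot)}(\cdot)$, and the only point worth flagging is that the $F$-equivariance of $\pi$ is exactly what ensures $\mathrm{C}_{\tilde{\mathbf{H}}^F}(\tilde t)$ maps into $\mathrm{C}_{\mathbf{H}^F}(t)$, so both factors of the product are simultaneously controlled by the single Levi $\tilde{\mathbf{L}} = \pi^{-1}(\mathbf{L})$.
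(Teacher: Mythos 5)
Your proof is correct and follows essentially the same route as the paper's: take the Levi subgroup of $\mathbf{H}$ containing $\mathrm{C}_{\mathbf{H}^F}(t)\mathrm{C}^\circ_{\mathbf{H}}(t)$, pull it back along $\pi$ to get a Levi of $\tilde{\mathbf{H}}$, and check that both factors $\mathrm{C}_{\tilde{\mathbf{H}}^F}(\tilde t)$ and $\mathrm{C}^\circ_{\tilde{\mathbf{H}}}(\tilde t)$ land in the preimage. You phrase it as a contrapositive while the paper argues directly (starting from an arbitrary Levi containing the product and forcing it to be all of $\mathbf{H}$), and you spell out the elementary inclusions that the paper delegates to Bonnaf\'e's (2.1)--(2.2), but the mathematical content is the same.
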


\begin{proof}
	Let $\mathbf{M}$ be a Levi subgroup of $\mathbf{H}$ containing $\mathrm{C}_{\mathbf{H}^{F}}(t) \mathrm{C}^\circ_{\mathbf{H}}(t)$. Then $\tilde{\mathbf{M}}:=\pi^{-1}(\mathbf{M})$ is a Levi subgroup of $\tilde{\mathbf{H}}$. By \cite[(2.1),(2.2)]{Bonnafe} together with the $F$-equivariance of $\pi$ we deduce that $$\pi(\mathrm{C}_{\tilde{\mathbf{H}}^{F}}(\tilde{t}) \mathrm{C}^\circ_{\tilde{\mathbf{H}}}(\tilde{t}) ) \subset \mathrm{C}_{\mathbf{H}^{F}}(t) \mathrm{C}^\circ_{\mathbf{H}}(t) \subset \mathbf{M}.$$
From this it follows that $\mathrm{C}_{\tilde{\mathbf{H}}^{F}}(\tilde{t}) \mathrm{C}^\circ_{\tilde{\mathbf{H}}}(\tilde{t}) \subset \pi^{-1}(\mathbf{M})= \tilde{\mathbf{M}}$. Since $\tilde{t}$ is strictly quasi-isolated by assumption we deduce that $\tilde{\mathbf{M}}=\tilde{\mathbf{H}}$ and therefore $\mathbf{M}={\mathbf{H}}$. In other words, $t$ is strictly quasi-isolated.
\end{proof}

\begin{lemma}\label{commutator}
	Suppose that we are in the situation of Theorem \ref{reductionquasi}. Let $c_0$ be a block of $[\Levi,\Levi]^F$ covered by $c$. Then $c_0$ is a strictly quasi-isolated block of $[\Levi,\Levi]^F$.
\end{lemma}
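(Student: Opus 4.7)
The plan is to apply Lemma~\ref{Bonnafe} to the isogeny $\pi^\ast\colon \Levi^\ast \twoheadrightarrow \Levi^\ast/\mathrm{Z}^\circ(\Levi^\ast)$, whose target is canonically identified with the dual group of $[\Levi,\Levi]$, since $\pi^\ast$ is the isogeny of connected reductive groups dual to the inclusion $[\Levi,\Levi] \hookrightarrow \Levi$.

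First I would verify that $s$ is strictly quasi-isolated in $\Levi^\ast$. By construction, $\Levi^\ast$ is the minimal $F^\ast$-stable Levi subgroup of $\G^\ast$ containing $\mathrm{C}(s):=\mathrm{C}_{(\G^\ast)^{F^\ast}}(s)\mathrm{C}_{\G^\ast}^\circ(s)$. Since any $F^\ast$-stable Levi subgroup of $\Levi^\ast$ is again a Levi subgroup of $\G^\ast$, minimality forces that no proper Levi subgroup of $\Levi^\ast$ contains $\mathrm{C}(s)$, and hence none contains the possibly smaller set $\mathrm{C}_{(\Levi^\ast)^{F^\ast}}(s)\mathrm{C}_{\Levi^\ast}^\circ(s) \subseteq \mathrm{C}(s)$. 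Thus $s$ is strictly quasi-isolated in $\Levi^\ast$.

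Next I would identify the semisimple label of the covered block. Since $[\Levi,\Levi]$ is normal in $\Levi$, the standard compatibility of the Bonnaf\'e--Michel central idempotent decomposition with the dual of the inclusion $[\Levi,\Levi] \hookrightarrow \Levi$ shows that any block of $[\Levi,\Levi]^F$ covered by a block occurring in $\mathcal{O}\Levi^F e_s^{\Levi^F}$ lies in $\mathcal{O}[\Levi,\Levi]^F e_{s_0}^{[\Levi,\Levi]^F}$, where $s_0:=\pi^\ast(s)$ is the image of $s$ under the canonical quotient $\pi^\ast$. Note that $s_0$ is a semisimple $\ell'$-element of $([\Levi,\Levi]^\ast)^{F^\ast}$ because $\pi^\ast$ is $F^\ast$-equivariant and preserves orders of semisimple elements. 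In particular $c_0$ lies in $\mathcal{O}[\Levi,\Levi]^F e_{s_0}^{[\Levi,\Levi]^F}$.

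Finally, the kernel of $\pi^\ast$ is the connected center $\mathrm{Z}^\circ(\Levi^\ast)$ and in particular central in $\Levi^\ast$, so Lemma~\ref{Bonnafe} applies with $\tilde{\mathbf{H}}=\Levi^\ast$, $\mathbf{H}=[\Levi,\Levi]^\ast$, $\pi=\pi^\ast$ and $\tilde t=s$. The conclusion is that $s_0=\pi^\ast(s)$ is strictly quasi-isolated in $[\Levi,\Levi]^\ast$, which is precisely the statement that $c_0$ is a strictly quasi-isolated block of $[\Levi,\Levi]^F$. The main technical point is the second step, namely the identification of the $\ell'$-semisimple element labelling the covered block $c_0$ with the image $\pi^\ast(s)$; once this is established, the application of Lemma~\ref{Bonnafe} is purely formal.
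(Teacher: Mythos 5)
Your proposal follows essentially the same route as the paper: both dualise the inclusion $[\Levi,\Levi]\hookrightarrow\Levi$ to a surjection $\Levi^\ast\twoheadrightarrow[\Levi,\Levi]^\ast$ with central kernel, observe that $s$ is strictly quasi-isolated in $\Levi^\ast$ via the minimality of $\Levi^\ast$, identify the label of the covered block as the image of $s$, and apply Lemma~\ref{Bonnafe}. One small logical wrinkle: your step ``no proper Levi of $\Levi^\ast$ contains $\mathrm{C}(s)$, hence none contains the \emph{subset} $\mathrm{C}_{(\Levi^\ast)^{F^\ast}}(s)\,\mathrm{C}^\circ_{\Levi^\ast}(s)$'' runs the implication in the wrong direction as stated, but this is harmless because $\mathrm{C}(s)\subseteq\Levi^\ast$ forces $\mathrm{C}_{(\Levi^\ast)^{F^\ast}}(s)=\mathrm{C}_{(\G^\ast)^{F^\ast}}(s)$ and $\mathrm{C}^\circ_{\Levi^\ast}(s)=\mathrm{C}^\circ_{\G^\ast}(s)$, so the two sets coincide and the conclusion holds.
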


\begin{proof}
	Since $\Levi=[\Levi,\Levi] \mathrm{Z}^\circ(\Levi)$ we deduce that the inclusion $\iota: [\Levi,\Levi] \hookrightarrow \Levi$ induces a dual morphism $\iota^\ast: \Levi^\ast \twoheadrightarrow [\Levi,\Levi]^\ast$ with central kernel. We let $\overline{s}:= \iota^\ast(s)$ be the image of $s$ under this map. Recall that $\Levi^\ast$ is the minimal Levi subgroup of $\G^\ast$ containing $\mathrm{C}_{(\G^\ast)^{F^\ast}}(s) \mathrm{C}^\circ_{\G^\ast}(s)$. Hence the element $s$ is strictly quasi-isolated in $\Levi^\ast$ and so $c$ is a strictly quasi-isolated block of $\Levi^F$. By Lemma \ref{Bonnafe} it follows that $\overline{s}$ is strictly quasi-isolated in $[\Levi, \Levi]^\ast$. Since $c$ is a block of $\mathcal{O} \Levi^F e_s^{\Levi^F}$ it follows that $c_0$ is a block of $\mathcal{O} [\Levi,\Levi]^F e_{\overline{s}}^{[\Levi,\Levi]^F}$. From this we conclude that $c_0$ is a strictly quasi-isolated block of $[\Levi,\Levi]^F$. 
\end{proof}

The proof of the following proposition is similar to the proof of \cite[Corollary 6.3]{JEMS}.

%
%
%

\begin{proposition}\label{directproduct}
	Let $c_0$ be a strictly quasi-isolated block of $L_0:=[\Levi,\Levi]^F$ of non-central defect. If Hypothesis \ref{assumptionquasi} holds for $(\G,F)$ then there exists a defect group $D_0$ of $c_0$ and a characteristic subgroup $Q_0$ of $D_0$ satisfying $\mathrm{N}_{L_0}(Q_0) \lneq L_0$ and an iAM-bijection 
	$\varphi_0:  \Irr_0(L_0 , c_0) \to \Irr_0(\mathrm{N}_{L_0}(Q_0) , (C_0)_{Q_0})$.
\end{proposition}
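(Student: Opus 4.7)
The plan is to decompose $L_0$ as a direct product of finite groups of Lie type arising from the simple factors of $[\Levi,\Levi]$, apply Hypothesis~\ref{assumptionquasi} to each factor, and assemble these into a global iAM-bijection.

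Since $\G$ is simply connected, $[\Levi,\Levi]$ is a direct product of simple simply connected algebraic groups, which $F$ permutes. Grouping these into $F$-orbits yields an $F$-stable decomposition $[\Levi,\Levi] = \mathbf{H}_1 \times \cdots \times \mathbf{H}_k$ and hence $L_0 = H_1 \times \cdots \times H_k$ with $H_i := \mathbf{H}_i^F$. Via the standard identification $H_i \cong \mathbf{H}_{i,0}^{F^{n_i}}$, where $\mathbf{H}_{i,0}$ is one of the simple factors in the $i$-th orbit and $n_i$ its length, each pair $(\mathbf{H}_{i,0}, F^{n_i})$ lies in the class $\mathcal{H}_\G$ of Hypothesis~\ref{assumptionquasi}. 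Since Levi subgroups of a direct product of connected reductive groups are direct products of Levi subgroups of the factors, the strictly quasi-isolated semisimple element $t \in L_0^\ast$ associated to $c_0$ decomposes as $t = (t_1,\ldots,t_k)$ with each $t_i$ strictly quasi-isolated in $\mathbf{H}_i^\ast$. Consequently $c_0 = c_1 \otimes \cdots \otimes c_k$ with each $c_i$ a strictly quasi-isolated block of $H_i$, and a defect group of $c_0$ takes the form $D_0 = D_1 \times \cdots \times D_k$ with $D_i$ a defect group of $c_i$.

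For each $i$ with $c_i$ of non-central defect, Hypothesis~\ref{assumptionquasi} applied to $(\mathbf{H}_{i,0}, F^{n_i})$ provides an iAM-bijection $\Psi_i : \Irr_0(H_i, c_i) \to \Irr_0(\mathrm{N}_{H_i}(Q_i), (c_i)_{Q_i})$, where $Q_i$ is the maximal normal abelian subgroup of $D_i$ in case (a) and $Q_i = D_i$ in case (b). For indices $i$ with $c_i$ of central defect, put $Q_i := D_i$ and let $\Psi_i$ be the identity. Set $Q_0 := Q_1 \times \cdots \times Q_k$. In case (a) each $D_i$ is Cabanes by the remark preceding Hypothesis~\ref{assumptionquasi}, so iterated application of Lemma~\ref{Cabanesgroup} identifies $Q_0$ as the maximal normal abelian subgroup of the Cabanes group $D_0$, making $Q_0$ characteristic in $D_0$; in case (b), $Q_0 = D_0$ is trivially characteristic. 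Since $c_0$ has non-central defect, at least one index $i$ satisfies $\mathrm{N}_{H_i}(Q_i) \lneq H_i$, whence $\mathrm{N}_{L_0}(Q_0) = \prod_i \mathrm{N}_{H_i}(Q_i) \lneq L_0$. Define $\varphi_0 := \Psi_1 \times \cdots \times \Psi_k$.

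It remains to verify that $\varphi_0$ is an iAM-bijection. Preservation of central characters and equivariance under the ``diagonal'' subgroup $\prod_i \mathrm{N}_{\mathrm{Aut}(H_i)}(D_i,c_i) \leq \Gamma := \mathrm{N}_{\mathrm{Aut}(L_0)}(D_0, c_0)$ are immediate from the factorwise properties, while the order relation
$$(L_0 \rtimes \Gamma_\chi, L_0, \chi) \geq_b (\mathrm{N}_{L_0}(Q_0) \rtimes \Gamma_\chi, \mathrm{N}_{L_0}(Q_0), \varphi_0(\chi))$$
for product character triples follows from the factorwise relations by taking tensor products of the associated projective representations. The main obstacle is compatibility with those elements of $\Gamma$ that permute isomorphic factors $H_i$: the hypothesis only furnishes an iAM-bijection for each individual block, so if $\gamma \in \Gamma$ sends $H_i$ onto $H_j$ carrying $c_i$ to $c_j$, one must choose $\Psi_j$ as the transport of $\Psi_i$ under $\gamma$. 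Selecting the $\Psi_i$ on a transversal for the $\Gamma$-action on the index set $\{1,\ldots,k\}$ and transporting to the remaining indices makes $\varphi_0$ fully $\Gamma$-equivariant and, together with the coherent choice of projective representations, completes the verification of $\geq_b$, in the same spirit as the proof of \cite[Corollary~6.3]{JEMS} referenced above.
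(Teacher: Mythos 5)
Your decomposition of $L_0$ into the factors $H_i \cong \mathbf{H}_{i,0}^{F^{n_i}}$ and the plan to build $\varphi_0$ factorwise, with $\Gamma$-equivariance ensured by transporting bijections along a transversal of the permutation action, is the right framework and is close in spirit to the paper's argument, which makes the permutation-compatibility precise via the wreath-product theorem \cite[Theorem 2.21]{LocalRep} after normalizing $c_{0,\mathrm{simp}}$ to the form $\bigotimes_i c_{y_i}^{\otimes|A_i|}$. Your "coherent choice of projective representations" gestures at what those cited results actually furnish, so I'll treat that part as essentially the intended route.

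The real gap is that you apply the \emph{conclusion} of Hypothesis~\ref{assumptionquasi} to every factor $H_i$, but the hypothesis only asserts the existence of an iAM-bijection when $\mathbf{H}^{F'}/\mathrm{Z}(\mathbf{H}^{F'})$ is an abstract simple group. You do check that each $(\mathbf{H}_{i,0}, F^{n_i})$ lies in the class $\mathcal{H}_{\G}$, but membership in $\mathcal{H}_{\G}$ does not imply quasi-simplicity of the fixed points: as recorded in Remark~\ref{solvable}, some $\mathbf{H}_{i,0}^{F^{n_i}}$ are solvable (or $\mathrm{Sp}_4(2)\cong S_6$), and for those Hypothesis~\ref{assumptionquasi} gives you nothing. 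The paper therefore splits $L_0 = L_{0,\mathrm{simp}}\times L_{0,\mathrm{solv}}$ according to which factors are quasi-simple, treats $L_{0,\mathrm{simp}}$ by the argument you sketch, and handles $L_{0,\mathrm{solv}}$ separately by a direct appeal to \cite[Theorem 7.1]{JEMS} together with the fact that the simple groups involved are already AM-good (\cite{Breuer}); in case (a) it further uses $\ell\geq 5$ to conclude $D_{0,\mathrm{solv}}$ is abelian, so that $Q_{0,\mathrm{simp}}\times D_{0,\mathrm{solv}}$ is still characteristic in $D_0$ via Lemma~\ref{Cabanesgroup}. Your dichotomy "central defect vs.\ non-central defect" does not track this issue, since a non-quasi-simple factor can carry a block of non-central defect, and there your invocation of the hypothesis simply fails. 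Without an argument for those factors the construction of $\varphi_0$ is incomplete.
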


\begin{proof}
	Since $\Levi$ is Levi subgroup of a simple algebraic group $\G$ of simply connected type it follows that $[\Levi,\Levi]$ is semisimple of simply connected type, see \cite[Proposition 12.4]{MT}. Thus, we have 
	$$[\Levi, \Levi]=\mathbf{H}_1 \times \dots \times \mathbf{H}_r,$$
	where the $\mathbf{H}_i$ are simple algebraic groups of simply connected type. We have a decomposition
	$$[\Levi,\Levi]^\ast= \mathbf{H}_1^\ast \times \mathbf{H}_2^\ast \times \dots \times \mathbf{H}_r^\ast$$
	into adjoint simple groups.
	
	The action of the Frobenius endomorphism $F$ induces a permutation $\pi$ on the set of simple components of $[\Levi,\Levi]$. We let $\pi=\pi_1 \dots \pi_t$ be the decomposition of this permutation into disjoint cycles. For $i=1, \dots,t$ choose $x_i \in \Pi_i$ in the support $\Pi_i$ of the permutation $\pi_i$ and let $n_i=|\Pi_i|$ be the length of $\pi_i$. For every $1 \leq i \leq t$ the inclusion map
	$$\mathbf{H}_{x_i} \hookrightarrow \prod_{x \in \Pi_i} \mathbf{H}_{x}$$
	induces isomorphisms between $\mathbf{H}_{x_i}^{F^{n_i}} $ and $ (\prod_{x \in \Pi_i} \mathbf{H}_{x})^F$.
	Consequently, we have $$L_0=[\Levi, \Levi]^F \cong \mathbf{H}^{F^{n_1}}_{x_1} \times \dots \times \mathbf{H}^{F^{n_t}}_{x_t}$$ and
	$$[\Levi^\ast, \Levi^\ast]^{F^\ast} \cong (\mathbf{H}^\ast_{x_1})^{{F^\ast}^{n_1}} \times \dots \times (\mathbf{H}^\ast_{x_t})^{{F^\ast}^{n_t}}$$
	in the dual group. There exists a semisimple element $\overline{s} \in [\Levi^\ast, \Levi^\ast]^{F^\ast}$ of $\ell'$-order such that $c_0$ is a block of $\mathcal{O} [\Levi,\Levi]^F e_{\underline{s}}^{[\Levi,\Levi]^F}$.
	
	Writing $\overline{s}=s_1  \dots s_t \in (\mathbf{H}^\ast_{x_1})^{{F^\ast}^{n_1}} \times \dots \times (\mathbf{H}^\ast_{x_t})^{{F^\ast}^{n_t}}$ with $s_i \in  (\mathbf{H}^\ast_{x_i})^{{F^\ast}^{n_i}}$ we obtain a decomposition $$e^{[\Levi,\Levi]^F}_{\overline{s}}=e_{s_1}^{\mathbf{H}_{x_1}^{F}} \otimes   \dots \otimes e_{s_t}^{\mathbf{H}_{x_t}^F}.$$
	In particular, the block $c_0$ can be written as $c_0=c_{x_1} \otimes  \dots \otimes c_{x_t}$ where the $c_{x_i}$ are blocks of $\mathbf{H}^{F^{n_i}}_{x_i}$. Note that the blocks $c_{x_i}$ are strictly quasi-isolated in $\mathbf{H}^{F^{n_i}}_{x_i}$ since $c_0$ is assumed to be strictly quasi-isolated. In the following we denote $H_{x_i}:=\mathbf{H}_{x_i}^{F^{n_i}}$.
	By possibly reordering the factors of $L_0$ we can assume that there exists some integer $v$ such that the factor $\mathbf{H}^{F^{n_i}}_{x_i}$ is quasi-simple if and only if $i \leq v$. Hence we can decompose $L_0$ as 
	$$L_0:=L_{0,\mathrm{simp}} \times L_{0,\mathrm{solv}},$$
	such that $L_{0,\mathrm{simp}}=H_{x_1} \times \dots \times H_{x_v}$ is a direct product of quasi-simple non-abelian finite groups and $L_{0,\mathrm{solv}}$ is a product of groups which are either solvable or almost simple as discussed in Remark \ref{solvable}. This induces a decomposition $c_0 = c_{0,\mathrm{simp}} \otimes c_{0, \mathrm{solv}}$ of blocks. 
	
	
	By Hypothesis \ref{assumptionquasi} there exist for each $i \leq v$ a characteristic subgroup $Q_i$ of a defect group $D_i$ of $c_i$ such that there exist an $\mathrm{N}_{\mathrm{Aut}(H_i)}(Q_i,C_i)$-equivariant iAM-bijection $\varphi_i: \Irr_0( H_{x_i}, c_{x_i} ) \to \Irr_0( \mathrm{N}_{H_{x_i}}(Q_i) , C_{x_i})$, where $C_{x_i}$ is the unique block in $\mathrm{Bl}(\mathrm{N}_{H_{x_i}}(Q_i) \mid D_i)$ with $C_{x_i}^{H_{x_i}}=c_{x_i}$. We let
	$$\{ x_1,\dots,x_v\}=A_1 \cup A_2 \cup \dots \cup A_u,$$
	be the partition such that $x_j,x_k \in A_i$ whenever $n_j=n_k$ and there exists a bijective morphism $\phi: \mathbf{H}_{x_j} \to \mathbf{H}_{x_k}$ commuting with the action of $F^{n_i}$ such that $\phi(c_{x_j})=c_{x_k}$. For each $i$ we fix a representative $x_{i_j} \in A_i$. We denote $y_i:=x_{ij}$ and $m_i:=n_{i_j}$.
	
	For a bijective morphism $\phi: [\Levi,\Levi] \to [\Levi,\Levi]$ commuting with $F$ it holds that $c_0$ is strictly quasi-isolated if and only if $\phi(c_0)$ is strictly quasi-isolated. Moreover, the conclusion of the proposition holds for $c_0$ if and only if it holds for $\phi(c_0)$. Hence, without loss of generality, we may conjugate the block $c_0$ by an element of $\mathrm{Aut}(L_{0,\mathrm{simp}})$ such that the block $c_{0,\mathrm{simp}}$ is of the form
	$$c_{0,\mathrm{simp}}=\bigotimes_{i=1}^u c^{\otimes |A_i|}_{y_i},$$
	where the $c_{y_i}$ are all distinct blocks. Therefore, the block stabilizer of $c_0$ satisfies
	$$\mathrm{N}_{\mathrm{Aut}(L_0)}(c_{0,\mathrm{simp}}) \cong \prod_{i=1}^u \mathrm{N}_{\mathrm{Aut}(H_{y_i})}(c_{y_i}) \wr S_{|A_i|},$$
	where $S_{|A_i|}$ denotes the symmetric group on $|A_i|$ letters.
	It follows that $D_{0,\mathrm{simp}}:=\prod_{i=1}^u D^{|A_i|}_{y_i}$ is a defect group of $c_{0,\mathrm{simp}}$. Moreover, $Q_{0,\mathrm{simp}}:= \prod_{i=1}^u Q^{|A_i|}_{y_i}$ is a characteristic subgroup of $D_{0,\mathrm{simp}}$ by Lemma \ref{Cabanesgroup}. We let $C_{0,\mathrm{simp}}$ be the unique block of $\mathrm{N}_{L_{0,\mathrm{simp}}}(Q_{0,\mathrm{simp}})$ with defect group $D_{0,\mathrm{simp}}$ satisfying $(C_{0,\mathrm{simp}})^{L_{0,\mathrm{simp}}}= c_{0, \mathrm{simp}}$. Define $H_{A_i}:= H_{y_i}^{|A_i|}$. For every $i$ we have
	$$\mathrm{N}_{H_{A_i}}(Q^{|A_i|}_{y_i}) \cong \mathrm{N}_{H_{A_{y_i}}}(Q_{y_i})^{|A_i|}.$$
	Thus, for each $i=1,\dots,u$ we obtain bijections 
	$$\varphi_{y_i}^{|A_i|}: \Irr_0(H_{A_i},  c_{y_i}^{\otimes |A_i|} ) \to \Irr_0(\mathrm{N}_{H_{A_i}}(Q_{y_i}^{|A_i|}) ,  C_{y_i}^{\otimes |A_i|}).$$
	We claim that these bijections are iAM-bijections. Every character $\chi \in \Irr_0(H_{A_i},  c_{y_i}^{\otimes |A_i|} )$ is $\mathrm{N}_{\mathrm{Aut}(H_{A_i})}({Q_{y_i}^{|A_i|},c_{y_i}^{\otimes|A_i|}})$-conjugate to a character $\prod_{i=1}^u \chi_i$ such that for every $i$ and $j$ we either have $\chi_i=\chi_j$ or $\chi_i$ and $\chi_j$ are not $\mathrm{Aut}(H_{A_i})$-conjugate. Since the relation $\geq_b$ is preserved by automorphisms we may assume that $\chi$ is of this form. Hence, $\chi= \prod_{l=1}^t \psi_k^{r_l}$, where the $\psi_l$ are all distinct characters and the $r_l$ are some integers. Therefore, we have
	$$\mathrm{Aut}(H_{A_i})_{\chi} \cong \prod_{l=1}^t \mathrm{Aut}(H_{y_i})_{\psi_l} \wr S_{r_l}.$$
	In other words, the stabilizer of $\chi$ is a direct product of wreath products.
	By \cite[Theorem 2.21]{LocalRep} and \cite[Theorem 4.6]{LocalRep} the relation $\geq_b$ is compatible with wreath products and by \cite[Theorem 2.18]{LocalRep} and  \cite[Theorem 4.6]{LocalRep} it is compatible with direct products. Hence we can conclude that the bijection $\varphi_{y_i}^{|A_i|}$ is an iAM-bijection.
	
	Using this it follows directly by \cite[Theorem 2.18]{LocalRep} and \cite[Theorem 4.6]{LocalRep} that the bijection
	$$\varphi_{0,\mathrm{simp}}:= \prod_{i=1}^u  \varphi_{x_{i_j}}^{|A_i|} : \Irr_0(L_{0,\mathrm{simp}}, c_{0,\mathrm{simp}} ) \to \Irr_0( \mathrm{N}_{L_{0,\mathrm{simp}}}(Q_{0,\mathrm{simp}}) , C_{0,\mathrm{simp}})$$
	is an iAM-bijection.
	
	Let $D_{0,\mathrm{solv}}$ be a defect group of $c_{0,\mathrm{solv}}$ and $C_{0,\mathrm{solv}}$ be the Brauer correspondent of $c_{0,\mathrm{solv}}$ in $\mathrm{N}_{L_{0,\mathrm{solv}}}(D_0)$. By \cite[Theorem 7.1]{JEMS} (and the fact that all simple groups involved in $L_{0,\mathrm{solv}}$ are AM-good, see Remark \ref{solvable} and \cite{Breuer}) there exists a strong iAM-bijection $$\varphi_{0,\mathrm{solv}}:\Irr_0(L_{0, \mathrm{solv}}, c_{0,\mathrm{solv}} ) \to \Irr_0( \mathrm{N}_{L_0}(D_{0,\mathrm{solv}}) , C_{0,\mathrm{solv}}).$$
	We note $D_0= D_{0,\mathrm{simp}} \times D_{0, \mathrm{solv}}$ is a defect group of the block $c_0$. If we are in case (a) of Hypothesis \ref{assumptionquasi} we have $\ell \geq 5$ and so we obtain that $D_{0, \mathrm{solv}}$ is abelian (the Sylow $\ell$-subgroups of $L_{0,\mathrm{solv}}$ are all abelian). Thus, the subgroup $Q_0:=Q_{0,\mathrm{simp}} \times D_{0, \mathrm{solv}}$ is a characteristic subgroup of $D_0$ by Lemma \ref{Cabanesgroup}. Since the image under an automorphism of a solvable (resp. quasi-simple) finite group is solvable (resp. quasi-simple), we obtain 
	$$\mathrm{Aut}(L_0) \cong \mathrm{Aut}(L_{0,\mathrm{simp}}) \times \mathrm{Aut}(L_{0,\mathrm{solv}}).$$
	Hence, by \cite[Theorem 2.18]{LocalRep} and \cite[Theorem 4.6]{LocalRep} we obtain an iAM-bijection
	$$\varphi_0:=\varphi_{0,\mathrm{simp}} \times \varphi_{0,\mathrm{solv}} : \Irr_0(L_0, c_{0}) \to \Irr_0( \mathrm{N}_{L_0}(Q_0) , C_0).$$
	It finally remains to show that $Q_0$ is non-central in $L_0$. By assumption the block $c_0$ of $L_0$ has non-central defect group $D_0=D_{0,\mathrm{simp}} \times D_{0,\mathrm{solv}}$. Hence, it follows that either $D_{0,\mathrm{simp}}$ is non-central in $L_{0,\mathrm{simp}}$ or $D_{0,\mathrm{solv}}$ is non-central in $L_{0,\mathrm{solv}}$. In the latter case it follows immediately that $Q_0$ is non-central in $L_0$. In the former case, we observe that there exists some $i$ with $1 \leq i \leq v$ such that $c_{x_i}$ has non-central defect group. In particular, $\mathrm{N}_{H_{x_i}}(Q_i) \subsetneq H_{x_i}$. From this we conclude that $Q_0$ is non-central in $L_0$.
\end{proof}


\subsection{Application of character triples}

We keep the notation of Section \ref{sectionquasi}. Furthermore, we fix a block $c_0$ of $L_0=[\Levi,\Levi]^F$ covered by the block $c$ of $L$ with defect group $D_0$.  By \cite[Theorem 9.26]{NavarroBook} we can assume that the defect group $D$ of $c$ satisfies $D_0=D \cap L_0$. Our aim in this section is to obtain an iAM-bijection for the block $c$.

\begin{lemma}\label{normalsubgroup}
	If Hypothesis \ref{assumptionquasi} holds for $(\G,F)$ then there exists an $\mathrm{N}_{\tilde{L} \mathcal{A}}(Q_0,C_{Q_0})$-equivariant bijection 
	$\hat{\varphi}:  \Irr_0(L , c) \to \Irr_0( \mathrm{N}_L(Q_0) , C_{Q_0})$ such that
	$$(( \tilde{L} \mathcal{A})_\chi, L, \chi) \geq_b (\mathrm{N}_{\tilde{L} \mathcal{A}}(Q_0)_{\varphi(\chi)}, \mathrm{N}_{L}(Q_0), \varphi(\chi))$$
	for every character $\chi \in \Irr_0(L,c)$.
\end{lemma}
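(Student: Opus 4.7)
The plan is to lift the input iAM-bijection for the strictly quasi-isolated block $c_0$ of $L_0$ up to the block $c$ of $L$, using the Clifford-theoretic machinery of Proposition \ref{JEMSproposition} applied to the normal chain $L_0 \lhd L \lhd \tilde{L}\mathcal{A}$. By Lemma \ref{commutator} the block $c_0$ is strictly quasi-isolated, so Hypothesis \ref{assumptionquasi} is available, and Proposition \ref{directproduct} furnishes an iAM-bijection
$$ \varphi_0 \colon \Irr_0(L_0, c_0) \to \Irr_0(\mathrm{N}_{L_0}(Q_0), (C_0)_{Q_0}) $$
together with the $\geq_b$ relation $(L_0 \rtimes \Gamma_\theta, L_0, \theta) \geq_b (\mathrm{N}_{L_0}(Q_0) \rtimes \Gamma_\theta, \mathrm{N}_{L_0}(Q_0), \varphi_0(\theta))$, where $\Gamma_\theta$ denotes the stabilizer of $\theta$ in $\mathrm{N}_{\mathrm{Aut}(L_0)}(Q_0,(C_0)_{Q_0})$.

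First I would upgrade both the equivariance of $\varphi_0$ and its $\geq_b$ relation to the group $X := \tilde{L}\mathcal{A}$. The explicit product-and-wreath construction of $\varphi_0$ in the proof of Proposition \ref{directproduct} already yields equivariance under $\mathrm{N}_X(Q_0,(C_0)_{Q_0})$, because the action of the latter on $L_0$ factors through the block-and-subgroup stabilizer inside $\prod_i \mathrm{N}_{\mathrm{Aut}(H_{y_i})}(c_{y_i}) \wr S_{|A_i|}$, compatibly with the component-permutation structure built into $\varphi_0$. The $\geq_b$ relation then transfers from the semidirect product $L_0 \rtimes \Gamma_\theta$ to $X_\theta$ via the Butterfly Theorem \ref{Butterfly}, applied to the canonical morphism $X_\theta \to \mathrm{Aut}(L_0)$.

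Next I would pass to the stabilizer $X_c$ of $c$ in $\tilde{L}\mathcal{A}$, which contains $\mathrm{N}_{\tilde{L}\mathcal{A}}(Q_0, C_{Q_0})$, and verify the Frattini identity $X_c = L_0 \, \mathrm{N}_{X_c}(Q_0,(C_0)_{Q_0})$ using that all defect groups of $c_0$ in $L_0$ are $L_0$-conjugate, hence so are their characteristic subgroups $Q_0$. Choosing a defect group $D$ of $c$ with $D \cap L_0 = D_0$ by \cite[Theorem 9.26]{NavarroBook}, and letting $C_{Q_0} \in \mathrm{Bl}(\mathrm{N}_L(Q_0) \mid D)$ be the unique block with $(C_{Q_0})^L = c$ lying over $(C_0)_{Q_0}$, all hypotheses of Proposition \ref{JEMSproposition} are then met with $N = L_0$, $J = L$, $X$ replaced by $X_c$ and $H = \mathrm{N}_{X_c}(Q_0)$; its conclusion provides the required $\mathrm{N}_H(D, C_{Q_0})$-equivariant bijection $\hat{\varphi} \colon \Irr_0(L, c) \to \Irr_0(\mathrm{N}_L(Q_0), C_{Q_0})$ with $(X_\chi, L, \chi) \geq_b (H_\chi, \mathrm{N}_L(Q_0), \hat{\varphi}(\chi))$ for every $\chi \in \Irr_0(L, c)$, which specializes to the $\mathrm{N}_{\tilde{L}\mathcal{A}}(Q_0, C_{Q_0})$-equivariance claimed.

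The main subtlety is that Proposition \ref{JEMSproposition} is phrased with the defect group $D_0$ in its Frattini-type hypothesis $H = M \, \mathrm{N}_X(D_0)$, whereas our input bijection is attached to the smaller characteristic subgroup $Q_0 \leq D_0$. Since $\mathrm{N}_X(D_0) \leq \mathrm{N}_X(Q_0)$ the Frattini step goes through with $Q_0$ in place of $D_0$, but one has to track the Brauer correspondents carefully through the two-step passage $L_0 \rightsquigarrow \mathrm{N}_{L_0}(Q_0) \rightsquigarrow \mathrm{N}_L(Q_0)$ and match $(C_0)_{Q_0}$ with the intermediate block appearing in the Harris–Knörr correspondence implicit in Definition \ref{blockisomorphism}; this bookkeeping is the only point requiring real care.
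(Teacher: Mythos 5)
Your proposal follows essentially the same route as the paper: invoke Proposition \ref{directproduct} (via Lemma \ref{commutator}) to get the iAM-bijection $\varphi_0$ on $L_0$, upgrade its $\geq_b$ relation to $(\tilde{L}\mathcal{A})_\chi$ using the Butterfly Theorem, and then apply Proposition \ref{JEMSproposition} with $N=L_0$, $J=L$, $H=\mathrm{N}_{\tilde{L}\mathcal{A}}(Q_0)$. The extra care you take with the Frattini hypotheses (working inside $X_c$, using $\mathrm{N}_X(D_0)\leq\mathrm{N}_X(Q_0)$) is sound bookkeeping that the paper leaves implicit, but it does not change the structure of the argument.
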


\begin{proof}
By Proposition \ref{directproduct}
	there exists an iAM-bijection 
	$$\varphi_0:  \Irr_0(L_0 , c_0) \to \Irr_0(\mathrm{N}_{L_0}(Q_0) , (C_0)_{Q_0}).$$
	The grouop $(\tilde{L} \mathcal{A})_\chi$ acts on $L_0$ since $L_0$ is a characteristic subgroup of $L$.
	
	Hence, we can apply the Butterfly Theorem, see Theorem \ref{Butterfly}, and we conclude that the bijection $\varphi_0:  \Irr_0(L_0 , c_0) \to \Irr_0(\mathrm{N}_{L_0}(Q_0) , (C_0)_{Q_0})$
	satisfies
	$$(( \tilde{L} \mathcal{A})_\chi, L_0, \chi) \geq_b (\mathrm{N}_{\tilde{L} \mathcal{A}}(Q_0)_{\varphi_0(\chi)},\mathrm{N}_{L_0}(Q_0), \varphi_0(\chi))$$
	for every character $\chi \in \Irr_0(L_0,e)$.
	
	Now we apply Proposition \ref{JEMSproposition} in the case $H=\mathrm{N}_{\tilde{L} \mathcal{A}}(Q_0)$, $N=L_0$ and $J=L$.
	We obtain an $\mathrm{N}_{\tilde{L} \mathcal{A}}(Q_0,C_{Q_0})$-equivariant bijection 
	$\hat{\varphi}:  \Irr_0(L , c) \to \Irr_0( \mathrm{N}_L(Q_0) , C_{Q_0})$ such that
	$$(( \tilde{L} \mathcal{A})_\chi, L, \chi) \geq_b (\mathrm{N}_{\tilde{L} \mathcal{A}}(Q_0)_{\varphi(\chi)}, \mathrm{N}_{L}(Q_0), \varphi(\chi))$$
	holds for every character $\chi \in \Irr_0(L,c)$.
\end{proof}

\subsection{The Dade--Nagao--Glauberman correspondence and character triples}

Observe that the subgroup $Q_0$ may fail to be a characteristic subgroup of the defect group $D$. Therefore, we can not directly apply Theorem \ref{reductionquasi}. 

Let us assume that we are in the situation of Hypothesis \ref{assumptionquasi}(b).
In the following proposition we construct a suitable bijection between the height zero characters of $\mathrm{N}_L(D_0)$ and the height zero characters of $\mathrm{N}_L(D)$. To do this, we will reduce the construction of such a bijection to a case where the Dade--Nagao--Glauberman correspondence in the form of Lemma \ref{DGN} is applicable.
 
\begin{proposition}
	There exists a bijection $\sigma: \Irr_0(\mathrm{N}_L(D_0) \mid D) \to \Irr_0(\mathrm{N}_L(D) \mid D)$
	such that
	$$(\mathrm{N}_{\tilde{L} \mathcal{A}}(D_0)_{\chi}, \mathrm{N}_{L}(D_0), \chi ) \geq_b (\mathrm{N}_{\tilde{L} \mathcal{A}}(D)_{\sigma(\chi)}, \mathrm{N}_{L}(D), \sigma(\chi)).$$
	for all $\chi \in \Irr_0(\mathrm{N}_L(D_0) \mid D)$.
\end{proposition}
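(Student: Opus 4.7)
My plan is to construct $\sigma$ by applying Lemma~\ref{DGN} after suitable block-theoretic reductions. The obstacle preventing a direct application is that $D_0 \lhd \mathrm{N}_L(D_0)$ is in general not central, so Lemma~\ref{DGN} cannot be invoked with $M = \mathrm{N}_L(D_0)$ and central $\ell$-subgroup $D_0$. The strategy is therefore to partition the domain and codomain by Brauer correspondence and, block-by-block, apply a Fong--Reynolds reduction to enter the hypotheses of Lemma~\ref{DGN}.

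First I would decompose both sides via Brauer's First Main Theorem applied to the $\ell$-subgroup $D$ of $\mathrm{N}_L(D_0)$: every character in $\Irr_0(\mathrm{N}_L(D_0) \mid D)$ lies in a block $B$ of $\mathrm{N}_L(D_0)$ with defect group $D$, and the Brauer correspondence $B \mapsto b$ is a bijection between such blocks and the blocks of $\mathrm{N}_{\mathrm{N}_L(D_0)}(D) = \mathrm{N}_L(D)$ of defect group $D$. It then suffices, for each pair $(B,b)$, to produce an $\mathrm{N}_{\tilde{L} \mathcal{A}}(D_0, B)$-equivariant bijection $\sigma_B \colon \Irr_0(B) \to \Irr_0(b)$ respecting the $\geq_b$ relation. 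Next, I would apply Fong--Reynolds reduction to the normal subgroup $K := D_0 \cdot \mathrm{C}_L(D_0) \lhd \mathrm{N}_L(D_0)$: the block $B$ covers a unique $\mathrm{N}_L(D_0)$-orbit of blocks $e$ of $K$ with defect group $D_0$, and the block stabilizer $T := \mathrm{N}_L(D_0)_e$ produces a Fong--Reynolds correspondent block $B'$ of $T$ together with a height-preserving character bijection $\Irr_0(B) \leftrightarrow \Irr_0(B')$ compatible with $\geq_b$; a parallel reduction applies at the level of $\mathrm{N}_L(D)$.

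With this setup in hand, inside $T$ I would choose an intermediate subgroup $M$ containing $D$ and a normal subgroup $N \leq K$ of $M$ such that $M/N$ is an $\ell$-group and $D_0 \leq \mathrm{Z}(M)$ modulo $\kernel(e) \cap \mathrm{Z}(D_0)$. Lemma~\ref{DGN} applied to $(\mathrm{N}_{\tilde{L} \mathcal{A}}(D_0)_e, M, N)$ with central $\ell$-subgroup $D_0$ then produces the required character bijection for $B'$, which transports back via Fong--Reynolds and assembles over blocks to $\sigma$, with equivariance and the $\geq_b$ relation inherited from the functoriality of Brauer correspondence, Fong--Reynolds, and Lemma~\ref{DGN}. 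The main obstacle will be the construction of the intermediate subgroup $M$ satisfying simultaneously the three hypotheses of Lemma~\ref{DGN}: $D \leq M$, $M/N$ an $\ell$-group, and $D_0$ central in $M$ modulo the appropriate kernel. When $D_0$ is non-abelian this step may require iteration along its ascending central series, invoking Lemma~\ref{DGN} at each stage of the filtration before composing to obtain the full bijection.
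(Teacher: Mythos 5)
The central obstacle you identify — that $D_0$ need not be central in $\mathrm{N}_L(D_0)$, so Lemma~\ref{DGN} is not directly applicable — is the right one, but your proposed remedy misses the mechanism the paper uses, and your fallback (iteration along the ascending central series) would not work with the tools available. The paper's key idea is character-theoretic, not block-theoretic: every $\psi \in \Irr_0(\mathrm{N}_L(D_0) \mid D)$ lies over a \emph{linear} character $\nu \in \mathrm{Lin}(D_0)$ (this is a height-zero phenomenon, \cite[Proposition 2.5(a)]{JEMS}). After a conjugation one may arrange that $\nu$ is $D$-stable and that its Clifford correspondent $\psi_0 \in \Irr(\mathrm{N}_L(D_0)_\nu \mid \nu)$ has a block with defect group $D$. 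The proof then fixes such a $\nu$, sets $M := \mathrm{N}_L(D_0)_\nu D$, and passes to $\overline{M} = M/\mathrm{Ker}(\nu)$: since $\nu$ is $M$-stable and linear, $d^{-1}d^m \in \mathrm{Ker}(\nu)$ for all $d \in D_0$, $m \in M$, so $[\overline{M}, \overline{D_0}] = 1$ and $\overline{D_0} \leq \mathrm{Z}(\overline{M})$. This is exactly the centrality condition needed for Lemma~\ref{DGN}, which now applies directly (with $X := \mathrm{N}_{\tilde L\mathcal{A}}(D_0)_\nu$); the result is then transported back via inflation (\cite[Lemma 3.12]{JEMS}) and Clifford theory (\cite[Theorem 3.14]{JEMS}), and the bijections for varying $\nu$ are assembled. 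No Fong--Reynolds reduction and no intermediate normal subgroup $K = D_0\mathrm{C}_L(D_0)$ are needed.

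Your proposal's gap is concrete: you write that constructing an $M$ with $D_0 \leq \mathrm{Z}(M)$ ``modulo the appropriate kernel'' is the main obstacle, and that when $D_0$ is non-abelian one ``may require iteration along its ascending central series.'' But Lemma~\ref{DGN} as stated requires $D_0 \leq \mathrm{Z}(M)$ \emph{globally}, not that $D_0$ sits in some term of the upper central series of $M$; an iterative application along a filtration is not licensed by the lemma, and you would need to redo the $\geq_b$ compatibility at each stage with no clear inductive mechanism. The non-abelianness of $D_0$ is simply not an obstruction here, because one quotients by $\mathrm{Ker}(\nu)$ for a \emph{linear} $\nu$ that is only defined on $D_0$ and whose stabilizer furnishes the right $M$; what you are missing is the observation that height-zero forces $\nu$ linear, which is what makes $\overline{D_0}$ both abelian and central. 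Separately, your use of Fong--Reynolds would require verifying that the Fong--Reynolds character bijection is compatible with $\geq_b$ in the sense of Definition~\ref{blockisomorphism}, which you assert but do not justify; the paper avoids this entirely by using only Clifford correspondence over $D_0$ and the cited compatibility lemmas from \cite{JEMS}.
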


\begin{proof}
	Every character of $\Irr_0(\mathrm{N}_L(D_0) \mid D)$ (respectively $\Irr_0(\mathrm{N}_L(D) \mid D)$) lies over a linear character $\nu \in \mathrm{Lin}(D_0)$ by \cite[Proposition 2.5(a)]{JEMS}.
	
	Let $\psi \in \Irr_0(\mathrm{N}_L(D_0) \mid D)$ and $\nu \in \Irr(D_0 \mid \psi)$. We let $\psi_0 \in \Irr(\mathrm{N}_L(D_0)_\nu \mid \nu)$ be the Clifford correspondent of $\psi$. By \cite[Proposition 2.5 (e)]{JEMS} we deduce that $\mathrm{bl}(\psi_0)$ and $\mathrm{bl}(\psi)$ have a common defect group. Thus, there exists some $n \in \mathrm{N}_L(D_0)$ such that $\mathrm{bl}(\psi_0^n)$ has defect group $D$ and $\psi_0^n \in \Irr(\mathrm{N}_L(D_0)_{\nu^n} \mid \nu^n)$. Replacing $\nu$ by $\nu^n$ and $\psi_0$ by $\psi_0^n$ we can therefore assume that $\mathrm{bl}(\psi_0)$ has defect group $D$. In particular, we have $D \subset \mathrm{N}_L(D_0)_\nu$ and thus the character $\nu$ is $D$-stable.
	
	Let $\mathcal{X}$ be a set of representatives of the $\mathrm{N}_L(D)$-conjugacy classes of $D$-stable characters in $\mathrm{Lin}(D_0)$. By Clifford theory, in order to prove the proposition it suffices to construct for every for $\nu \in \mathcal{X}$ a bijection
	$$\Irr_0(\mathrm{N}_L(D_0) \mid D \mid \nu) \to \Irr_0(\mathrm{N}_L(D) \mid D \mid \nu)$$
	with the required properties. Here, $\Irr_0(\mathrm{N}_L(D_0) \mid D \mid \nu)$ (and similarly $\Irr_0(\mathrm{N}_L(D) \mid D \mid \nu)$) denotes the subset of characters of $\Irr_0(\mathrm{N}_L(D_0) \mid D)$ covering the character $\nu$.

	We have $\mathrm{Ker}(\nu) \lhd M:=\mathrm{N}_L(D_0)_\nu D$. Consider the natural projection map $\bar{}: M \to M/\mathrm{Ker}(\nu)$. Then we have $[\overline{M},\overline{D_0}]=1$ since the character $\nu$ is $M$-stable. In particular, $\overline{D_0}$ is a central subgroup of $\overline{M}$.
	Denote $X:=\mathrm{N}_{\tilde L \mathcal{A}}(D_0)_{\nu}$ and $\overline{X}:=X/ \mathrm{ker}(\nu)$. We apply
	Lemma \ref{DGN}
	and obtain a bijection
	$$\Pi_{\overline{D}}:\Irr_0(\overline{M} \mid \overline{D}) \to \Irr_0(\mathrm{N}_{\overline{M}}(\overline{D}) \mid \overline{D})$$
	with $(\overline{X}_\chi, \overline{M}, \chi) \geq_b (\mathrm{N}_{\overline{X}}(D)_\chi, \mathrm{N}_{\overline{M}}(D), \Pi_{\overline{D}}( \chi ))$ for every $\chi \in \Irr_0(\overline{M} \mid \overline{D})$. Let $\overline{\nu}$ be the character of $\overline{D}_0$ which inflates to $\nu$. Observe that if $\chi$ lies above $\overline{\nu}$, then also the character $\Pi(\chi)$ lies above $\overline{\nu}$ by Lemma \ref{propertiesofcpartd}. Note that $\mathrm{N}_{\overline{M}}(\overline{D})=\overline{\mathrm{N}_L(D)_\nu}$. Therefore, inflation yields a bijection
	$$\Pi:\Irr_0(\mathrm{N}_L(D_0)_\nu \mid D \mid \nu) \to \Irr_0(\mathrm{N}_L(D)_\nu \mid D \mid \nu),$$
	which by \cite[Lemma 3.12]{JEMS} satisfies
	$(X_\chi, M, \chi) \geq_b (\mathrm{N}_X(D)_\chi, \mathrm{N}_M(D), \Pi( \chi ))$ for every $\chi \in \Irr_0(M , D)$.
	By Clifford theory we thus obtain a bijection
	$$\sigma:\Irr_0(\mathrm{N}_L(D_0) \mid D \mid \nu) \to \Irr_0(\mathrm{N}_L(D) \mid D \mid \nu)$$
	We now apply \cite[Theorem 3.14]{JEMS} to obtain
	$$(\mathrm{N}_{\tilde{L} \mathcal{A}}(D_0)_{\chi}, \mathrm{N}_{L}(D_0), \chi ) \geq_b (\mathrm{N}_{\tilde{L} \mathcal{A}}(D)_{\sigma(\chi)}, \mathrm{N}_{L}(D), \sigma(\chi)).$$
	Hence, the bijection $\sigma$ has the required properties.	
\end{proof}

\begin{lemma}\label{normalsubgroup2}
	If Hypothesis \ref{assumptionquasi}(b) holds for $(\G,F)$ then the $\mathrm{N}_{\tilde{L} \mathcal{A}}(D,C_{D})$-equivariant bijection 
	$\varphi:=\sigma \circ \hat{\varphi}:  \Irr_0(L , c) \to \Irr_0( \mathrm{N}_L(D) , C_{D})$ such that
	$$(( \tilde{L} \mathcal{A})_\chi, L, \chi) \geq_b (\mathrm{N}_{\tilde{L} \mathcal{A}}(D)_{\varphi(\chi)}, \mathrm{N}_{L}(D), \varphi(\chi))$$
	for every character $\chi \in \Irr_0(L,c)$.
\end{lemma}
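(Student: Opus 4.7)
The plan is to compose the bijection $\hat\varphi$ from Lemma \ref{normalsubgroup} with the bijection $\sigma$ from the preceding proposition and then invoke transitivity of the relation $\geq_b$. Under Hypothesis \ref{assumptionquasi}(b), the construction in Proposition \ref{directproduct} uses $Q_i = D_i$ on each quasi-simple factor, and the solvable part already comes with its full defect group, so $Q_0 = D_0$. Hence Lemma \ref{normalsubgroup} supplies an $\mathrm{N}_{\tilde L \mathcal{A}}(D_0, C_{D_0})$-equivariant bijection
$$\hat\varphi \colon \Irr_0(L, c) \to \Irr_0(\mathrm{N}_L(D_0), C_{D_0})$$
satisfying $((\tilde L \mathcal{A})_\chi, L, \chi) \geq_b (\mathrm{N}_{\tilde L \mathcal{A}}(D_0)_{\hat\varphi(\chi)}, \mathrm{N}_L(D_0), \hat\varphi(\chi))$ for every $\chi \in \Irr_0(L,c)$.

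Next I would verify that the image of $\hat\varphi$ is contained in the domain $\Irr_0(\mathrm{N}_L(D_0) \mid D)$ of $\sigma$. Since $D_0 = D \cap L_0$ and $L_0$ is characteristic in $L$, any element of $\mathrm{N}_L(D)$ normalizes $D_0$, so $\mathrm{N}_L(D) \leq \mathrm{N}_L(D_0)$. By Brauer's first main theorem applied to $c$, the unique block $C_{D_0}$ of $\mathrm{N}_L(D_0)$ with $(C_{D_0})^L = c$ has defect group $D$, and by transitivity of Brauer correspondence $(C_D)^{\mathrm{N}_L(D_0)} = C_{D_0}$. Hence $\Irr_0(\mathrm{N}_L(D_0), C_{D_0}) \subseteq \Irr_0(\mathrm{N}_L(D_0) \mid D)$. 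Setting $\varphi := \sigma \circ \hat\varphi$, the block part of the $\geq_b$ relation output by the proposition gives $\mathrm{bl}(\sigma(\psi))^{\mathrm{N}_L(D_0)} = \mathrm{bl}(\psi) = C_{D_0}$ for $\psi$ in the image of $\hat\varphi$, so uniqueness of Brauer correspondents forces $\mathrm{bl}(\sigma(\psi)) = C_D$, i.e.\ $\varphi$ takes values in $\Irr_0(\mathrm{N}_L(D), C_D)$.

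For the equivariance claim, elements of $\mathrm{N}_{\tilde L \mathcal{A}}(D, C_D)$ automatically normalize $D_0$ (by the argument above) and stabilize $C_{D_0}$ (again by uniqueness, since Brauer induction is equivariant); hence $\mathrm{N}_{\tilde L \mathcal{A}}(D, C_D) \leq \mathrm{N}_{\tilde L \mathcal{A}}(D_0, C_{D_0})$, under which $\hat\varphi$ is equivariant. The bijection $\sigma$ is equivariant under the same group because the relation $\geq_b$ is preserved under conjugation and $\sigma$ is determined by it. The composition $\varphi$ is therefore $\mathrm{N}_{\tilde L \mathcal{A}}(D, C_D)$-equivariant. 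Finally, chaining the two $\geq_b$ relations using the transitivity of $\geq_b$ (a general fact about character triples as in \cite[Theorem 2.16]{LocalRep} and \cite[Theorem 4.6]{LocalRep}) gives
$$((\tilde L \mathcal{A})_\chi, L, \chi) \geq_b (\mathrm{N}_{\tilde L \mathcal{A}}(D)_{\varphi(\chi)}, \mathrm{N}_L(D), \varphi(\chi)),$$
as desired. The only non-routine bookkeeping is the Brauer-correspondence step used to match the codomain of $\hat\varphi$ with the domain of $\sigma$ at the block level; everything else is a direct application of transitivity and of the results already established.
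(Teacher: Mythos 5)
Your argument is correct and follows the same route as the paper: the paper's proof is a one-liner invoking transitivity of $\geq_b$ (via \cite[Lemma 3.8(a)]{JEMS}), and you are simply spelling out the bookkeeping the paper leaves implicit — that $Q_0 = D_0$ under Hypothesis \ref{assumptionquasi}(b), that the codomain of $\hat\varphi$ lands in $\Irr_0(\mathrm{N}_L(D_0)\mid D)$ so that $\sigma$ can be applied, the matching of Brauer correspondents, and the equivariance. One small slip: the references you give for transitivity of $\geq_b$ (\cite[Theorem 2.16]{LocalRep}, \cite[Theorem 4.6]{LocalRep}) are the ones the paper uses for the Butterfly theorem; the transitivity statement the paper actually invokes is \cite[Lemma 3.8(a)]{JEMS}.
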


\begin{proof}
	This follows from applying \cite[Lemma 3.8(a)]{JEMS} (i.e. using the transitivity of the relation ``$\geq_b$'').
\end{proof}

\subsection{Properties of Cabanes subgroups}

Let us now assume that we are in the situation of Hypothesis \ref{assumptionquasi}(a). In this case the subgroup $Q_0$ is by construction the maximal  normal abelian subgroup of the defect group $D_0$. This time, we want to compare the height zero characters of $\mathrm{N}_L(Q_0)$ to the height zero characters of $\mathrm{N}_L(Q)$ where $Q$ is the maximal abelian normal subgroup of $D$. Unfortunately, our considerations in the previous section can not be applied to our situation. Therefore, we assume the following additional hypothesis:

\begin{hypo*}\label{assumptionquasi2}
Assume Hypothesis \ref{assumptionquasi}. If we are in the case of Hypothesis \ref{assumptionquasi}(a) we assume additionally that $\mathrm{N}_{L}(Q)=\mathrm{N}_{L}(Q_0)$.
\end{hypo*}

This hypothesis is discussed in more depth in \cite[Section 11]{Jordan3}.
%
We show that the analog of Lemma \ref{normalsubgroup2} holds whenever Hypothesis \ref{assumptionquasi}(b)' is satisfied.

\begin{lemma}\label{normalsubgroup3}
	If Hypothesis \ref{assumptionquasi}(b)' holds for $(\G,F)$ then the bijection 
	$\hat{\varphi}:  \Irr_0(L , c) \to \Irr_0( \mathrm{N}_L(Q) , C_{Q})$ from Lemma \ref{normalsubgroup} is $\mathrm{N}_{\tilde{L} \mathcal{A}}(Q,C_{Q})$-equivariant and satisfies
	$$(( \tilde{L} \mathcal{A})_\chi, L, \chi) \geq_b (\mathrm{N}_{\tilde{L} \mathcal{A}}(Q)_{\varphi(\chi)}, \mathrm{N}_{L}(Q), \varphi(\chi))$$
	for every character $\chi \in \Irr_0(L,c)$.
\end{lemma}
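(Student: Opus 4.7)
The plan is to deduce the lemma directly from Lemma \ref{normalsubgroup} by using the extra assumption $\mathrm{N}_L(Q)=\mathrm{N}_L(Q_0)$ in Hypothesis \ref{assumptionquasi}(a)' to identify the data in the target of the bijection. First I would observe that $Q_0 \leq Q$. Indeed, $D_0 = D \cap L_0$ is normal in $D$ because $L_0 = [\Levi,\Levi]^F$ is characteristic in $L$; and $Q_0$, being characteristic in $D_0$ (as the unique maximal abelian normal subgroup of the Cabanes group $D_0$), is therefore an abelian normal subgroup of $D$. Hence $Q_0$ lies inside the maximal abelian normal subgroup $Q$ of $D$. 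Setting $M:=\mathrm{N}_L(Q)=\mathrm{N}_L(Q_0)$, this yields in particular $\mathrm{N}_L(D)\leq M$ and $D\leq M$.

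Next I would show that $C_Q = C_{Q_0}$ as blocks of $M$. Both are blocks of $M$ with defect group $D$ whose Brauer-induced block to $L$ equals $c$, so by Brauer's First Main Theorem applied to the chain $\mathrm{N}_L(D)\leq M\leq L$, such a block is unique; the two therefore coincide. With this identification the bijection $\hat{\varphi}:\Irr_0(L,c)\to\Irr_0(M,C_{Q_0})=\Irr_0(M,C_Q)$ from Lemma \ref{normalsubgroup} is already the desired bijection, and the required $\geq_b$ relation is then a literal transcription of the one provided by Lemma \ref{normalsubgroup}.

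The step I expect to require the most care is upgrading the equivariance from $\mathrm{N}_{\tilde{L}\mathcal{A}}(Q_0,C_{Q_0})$ to $\mathrm{N}_{\tilde{L}\mathcal{A}}(Q,C_Q)$. The approach is as follows. Any $g\in\mathrm{N}_{\tilde{L}\mathcal{A}}(Q,C_Q)$ normalizes $M=\mathrm{N}_L(Q)$ and stabilises $C_Q = C_{Q_0}$, and normalizes $L_0$ since $L_0$ is characteristic in $L$. Because $g$ stabilises the block $C_{Q_0}$, it permutes its defect groups among themselves, so after adjusting $g$ by an element of $M$ (which is harmless for the equivariance statement, since $M$ acts trivially by conjugation on $\Irr(M)$ up to inner automorphism) we may assume $g$ normalizes a fixed defect group $D$ of $C_{Q_0}$; then $g$ normalizes $D_0 = D\cap L_0$, and hence normalizes $Q_0$, its unique maximal abelian normal subgroup. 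Thus $g\in\mathrm{N}_{\tilde{L}\mathcal{A}}(Q_0,C_{Q_0})$, and the equivariance of Lemma \ref{normalsubgroup} applies. The $\geq_b$ statement is preserved under this identification, completing the proof.
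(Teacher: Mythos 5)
Your overall strategy matches the paper's: use $\mathrm{N}_L(Q_0)=\mathrm{N}_L(Q)$ to identify the local groups, observe $Q_0\leq Q$ (the paper uses the equivalent fact $Q_0=Q\cap L_0$), deduce $C_{Q_0}=C_Q$ via the uniqueness in Brauer's correspondence, and then compare stabilizers. Your arguments for $Q_0\leq Q$, for $\mathrm{N}_L(D)\leq M$, and for the identification $C_Q=C_{Q_0}$ are all correct and in the same spirit as the paper.

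However, there is a gap at the final step. Transporting the relation of Lemma \ref{normalsubgroup}, namely
$(( \tilde{L} \mathcal{A})_\chi, L, \chi) \geq_b (\mathrm{N}_{\tilde{L} \mathcal{A}}(Q_0)_{\varphi(\chi)}, \mathrm{N}_{L}(Q_0), \varphi(\chi))$,
to the one claimed in Lemma \ref{normalsubgroup3} requires the character triples on the right to literally coincide, i.e. one needs the \emph{equality} $\mathrm{N}_{\tilde{L} \mathcal{A}}(Q_0)_{\varphi(\chi)} = \mathrm{N}_{\tilde{L} \mathcal{A}}(Q)_{\varphi(\chi)}$ (the relation $\geq_b$ is a statement about two specific triples and does not automatically pass to a subgroup on the right-hand side, as conditions like $\mathrm{C}_G(N)\leq H$ would have to be re-verified). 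Your argument establishes only the containment $\mathrm{N}_{\tilde{L}\mathcal{A}}(Q,C_Q)\subseteq\mathrm{N}_{\tilde{L}\mathcal{A}}(Q_0,C_{Q_0})$, which gives $\mathrm{N}_{\tilde{L} \mathcal{A}}(Q)_{\varphi(\chi)} \leq \mathrm{N}_{\tilde{L} \mathcal{A}}(Q_0)_{\varphi(\chi)}$ --- this is the direction the paper dismisses as ``clear'' (anything stabilizing $Q$ and $L_0$ stabilizes $Q_0 = Q\cap L_0$). The substantive converse inclusion is not addressed; the sentence ``The $\geq_b$ statement is preserved under this identification'' begs the question. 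The paper proves the converse by noting that any $x\in\mathrm{N}_{\tilde{L} \mathcal{A}}(Q_0)_{\varphi(\chi)}$ stabilizes $c$, so since $Q$ is characteristic in the defect group $D$ of $c$ there is $l\in L$ with $lx$ stabilizing $Q$, hence stabilizing $Q_0=Q\cap L_0$; as $x$ also stabilizes $Q_0$, one gets $l\in\mathrm{N}_L(Q_0)=\mathrm{N}_L(Q)$ and thus $x$ stabilizes $Q$. Your defect-group-adjustment argument could in fact be re-run in that direction as well (for $g\in\mathrm{N}_{\tilde L\mathcal A}(Q_0,C_{Q_0})$, adjust by $m\in M$ to normalize $D$, then $mg$ normalizes $Q\operatorname{char}D$ and $m\in M=\mathrm{N}_L(Q)$ forces $g$ to normalize $Q$), but as written this direction is missing.
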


\begin{proof}
		
		Since $\mathrm{N}_{L}(Q_0)=\mathrm{N}_{L}(Q)$ by assumption we have $C_{Q_0}=C_{Q}$. We clearly have $\mathrm{N}_{\tilde{L} \mathcal{A}}(Q)_{\varphi(\chi)} \leq \mathrm{N}_{\tilde{L} \mathcal{A}}(Q_0)_{\varphi(\chi)}$. Conversely, if $x \in \mathrm{N}_{\tilde{L} \mathcal{A}}(Q_0)_{\varphi(\chi)}$ then $x$ stabilizes the block $c$. Since $Q$ is a characteristic subgroup of the defect group $D$ of $c$ there exists some $l \in L$ such that $lx$ stabilizes $Q$. Therefore $lx$ stabilizes $Q_0=Q \cap L_0$. Consequently, $l \in \mathrm{N}_L(Q_0)=\mathrm{N}_L(Q)$ and so $x \in \mathrm{N}_{\tilde{L} \mathcal{A}}(Q_0)_{\varphi(\chi)}$. Therefore we indeed have
		$$(( \tilde{L} \mathcal{A})_\chi, L, \chi) \geq_b (\mathrm{N}_{\tilde{L} \mathcal{A}}(Q)_{\varphi(\chi)}, \mathrm{N}_{L}(Q), \varphi(\chi)).$$
		In a similar vain, one proves that $\mathrm{N}_{\tilde{L} \mathcal{A}}(Q,C_{Q})=\mathrm{N}_{\tilde{L} \mathcal{A}}(Q_0,C_{Q_0})$ and thus the bijection $\hat{\varphi}$ has all the required properties.
\end{proof}

\subsection{Jordan decomposition for the Alperin--McKay conjecture}

\begin{lemma}\label{Coro24}
	Suppose that Hypothesis \ref{assumptionquasi}' holds for $(\G,F)$. Then there exists a bijection $\tilde{\varphi}: \Irr(\tilde{L} \mid \Irr_0(c) ) \to \Irr(\mathrm{N}_{\tilde{L}}(Q) \mid \Irr_0( C_{Q}))$
	such that $\tilde{\varphi}$ together with the bijection $\varphi: \Irr_0(L , c) \to \Irr_0( \mathrm{N}_L(Q) , C_{Q})$ constructed in the proof of Lemma \ref{normalsubgroup} satisfy assumptions (i)-(iii) of Theorem \ref{reductionquasi}.
\end{lemma}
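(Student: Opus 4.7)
The plan is to define $\tilde{\varphi}$ by combining the $\sigma_J$-bijections arising from Theorem~\ref{sigmabijections} with Clifford induction, starting from the block isomorphism already produced in Lemma~\ref{normalsubgroup3} (in case (a) of the hypothesis) or Lemma~\ref{normalsubgroup2} (in case (b)). For every $\chi \in \Irr_0(L,c)$ we have
$$((\tilde{L}\mathcal{A})_\chi, L, \chi) \geq_b (\mathrm{N}_{\tilde{L}\mathcal{A}}(Q)_{\varphi(\chi)}, \mathrm{N}_L(Q), \varphi(\chi)),$$
and applying Theorem~\ref{sigmabijections} to the intermediate subgroup $J = \tilde{L}_\chi$ yields a bijection
$$\sigma_{\tilde{L}_\chi} \colon \Irr(\tilde{L}_\chi \mid \chi) \longrightarrow \Irr(\mathrm{N}_{\tilde{L}}(Q)_{\varphi(\chi)} \mid \varphi(\chi)).$$
I would then define $\tilde{\varphi}$ on $\Irr(\tilde{L} \mid \chi)$ by post-composing Clifford induction on both sides: send $\mathrm{Ind}_{\tilde{L}_\chi}^{\tilde{L}}(\theta_0)$ to $\mathrm{Ind}_{\mathrm{N}_{\tilde{L}}(Q)_{\varphi(\chi)}}^{\mathrm{N}_{\tilde{L}}(Q)}(\sigma_{\tilde{L}_\chi}(\theta_0))$. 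Choosing a set of $\tilde{L}$-orbit representatives among characters in $\Irr_0(L,c)$ and using Lemma~\ref{propertiesofc}(c) together with the $\mathrm{N}_{\tilde{L}\mathcal{A}}(Q,C_Q)$-equivariance of $\varphi$, the local definitions patch to a globally well-defined bijection $\tilde{\varphi} \colon \Irr(\tilde{L} \mid \Irr_0(c)) \to \Irr(\tilde{M}_L \mid \Irr_0(C_Q))$.

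Condition (iii) is then immediate: by Definition~\ref{blockisomorphism}(ii), the relation $\geq_b$ forces $\mathrm{bl}(\theta_0) = \mathrm{bl}(\sigma_{\tilde{L}_\theta}(\theta_0))^{\tilde{L}_\theta}$, and by construction $\sigma_{\tilde{L}_\theta}(\theta_0)$ is exactly the Clifford correspondent $\theta_0'$ of $\tilde{\varphi}(\tilde{\theta})$. Condition (ii) is also built in: the first bullet (extendibility of $\varphi(\chi)$ to $\mathrm{N}_H(Q)$ whenever $\chi$ extends to $H \leq L\mathcal{A}$) is part of the conclusion of Lemma~\ref{normalsubgroup}, while the second bullet $\tilde{\varphi}(\Irr(\tilde{L} \mid \chi)) = \Irr(\tilde{M}_L \mid \varphi(\chi))$ is exactly how $\tilde{\varphi}$ was defined via Clifford correspondence, once one observes that $\tilde{L}_\chi \cap \mathrm{N}_{\tilde{L}\mathcal{A}}(Q)_{\varphi(\chi)} = \mathrm{N}_{\tilde{L}}(Q)_{\varphi(\chi)}$.

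For condition (i), bijectivity is automatic. The $\Irr(\tilde{M}_L/M_L)$-equivariance of $\tilde{\varphi}$ follows from Lemma~\ref{propertiesofc}(b): under the natural identification $\tilde{L}/L \cong \tilde{M}_L/M_L$, tensoring on the global side by $\tilde{\nu} \in \Irr(\tilde{L}/L)$ corresponds via $\sigma_{\tilde{L}_\chi}$ to tensoring on the local side by the restriction of $\tilde{\nu}$, and Clifford induction commutes with such tensor products. The $\hat{M}_L$-equivariance follows similarly from Lemma~\ref{propertiesofc}(c) combined with the $\mathrm{N}_{\tilde{L}\mathcal{A}}(Q,C_Q)$-equivariance of $\varphi$. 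For the central character compatibility, note that $\mathrm{Z}(\tilde{G}) \subseteq \mathrm{Z}(\tilde{L}) \subseteq \mathrm{C}_{(\tilde{L}\mathcal{A})_\chi}(L)$, so Lemma~\ref{propertiesofcpartd} (the refinement available from $\geq_c$, which $\geq_b$ implies) guarantees that $\sigma_{\tilde{L}_\chi}$ and hence $\tilde{\varphi}$ sends characters lying above a given $\nu \in \Irr(\mathrm{Z}(\tilde{G}))$ to characters lying above the same $\nu$.

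The main technical obstacle I expect is the bookkeeping around well-definedness of $\tilde{\varphi}$ across different choices of $\tilde{L}$-orbit representatives $\chi$, and in particular showing that the several equivariances (under $\Irr(\tilde{M}_L/M_L)$, under $\hat{M}_L$, under $\mathrm{Z}(\tilde{G})$) hold simultaneously rather than one at a time. All the ingredients are already contained in Lemma~\ref{propertiesofc}, Lemma~\ref{propertiesofcpartd}, and the equivariance properties packaged into Lemma~\ref{normalsubgroup3}/\ref{normalsubgroup2}; the work is purely in assembling them.
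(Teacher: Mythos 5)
Your proposal follows the paper's own argument essentially verbatim: pick a transversal of $\Irr_0(L,c)$ modulo $\mathrm{N}_{\tilde L}(c)$ (which is the same thing as your $\tilde L$-orbit representatives, since a $\tilde L$-conjugate of $\chi \in \Irr_0(L,c)$ stays in that set only if the conjugating element stabilizes $c$), invoke Theorem \ref{sigmabijections} for the intermediate subgroup $\tilde L_\chi$ applied to the block isomorphism from Lemma \ref{normalsubgroup2}/\ref{normalsubgroup3}, pass through Clifford correspondence on both sides, and then read off conditions (i)--(iii) from Lemmas \ref{propertiesofc} and \ref{propertiesofcpartd} and Definition \ref{blockisomorphism}(ii). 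The only cosmetic difference is that you attribute the extendibility bullet of condition (ii) to the statement of Lemma \ref{normalsubgroup}, whereas it is more precisely a consequence of the $\geq_b$ relation there via Lemma \ref{propertiesofc}(a); the substance and order of the argument are the same.
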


\begin{proof}
	Choose a transversal $\mathcal{T}$ of the characters in $\Irr_0(L,c)$ under $\mathrm{N}_{\tilde{L}}(c)$-conjugation.
	By Lemma \ref{normalsubgroup2} (respectively Lemma \ref{normalsubgroup3}) we obtain an $\mathrm{N}_{\tilde{L} \mathcal{A}}(Q,C_{Q})$-equivariant bijection 
	$\varphi:  \Irr_0(L , c) \to \Irr_0( \mathrm{N}_L(Q) , C_Q)$ such that
	$$(( \tilde{L} \mathcal{A})_\chi, L, \chi) \geq_b (\mathrm{N}_{\tilde{L} \mathcal{A}}(Q)_{\varphi(\chi)}, \mathrm{N}_{L}(Q), \varphi(\chi)),$$
	for every character $\chi \in \Irr_0(L,c)$. Hence, for every character $\chi \in \mathcal{T}$ we obtain by Theorem \ref{sigmabijections} a bijection $$\sigma_{\tilde{L}_\chi}^{(\chi)}: \Irr(\tilde{L}_\chi \mid \chi) \to \Irr(\mathrm{N}_{\tilde{L}}(Q)_\chi \mid \varphi(\chi)).$$
	By Clifford correspondence we then obtain a bijection $$\tilde{\varphi}_\chi:\Irr(\tilde{L} \mid \chi) \to \Irr(\tilde{M}_L \mid \varphi(\chi)).$$
	The disjoint union of the bijections $\tilde{\varphi}_\chi$, $\chi \in \mathcal{T}$, induces a bijective map
	$$\tilde{\varphi}: \Irr(\tilde{L} \mid \Irr_0(c) ) \to \Irr(\tilde{M}_L \mid \Irr_0(C_D)).$$
	By Lemma \ref{propertiesofc}, the bijection $\tilde{\varphi}$ is $\Irr( \tilde{M}_L / M_L) \rtimes \hat{M}_L$-equivariant. Together with Lemma \ref{propertiesofcpartd}, it follows that the bijections $\varphi$ and $\tilde{\varphi}$ satisfy assumption (i) of Theorem \ref{reductionquasi}. Moreover, Lemma \ref{propertiesofc} show that assumption (ii) in Theorem \ref{reductionquasi} is satisfied.
	%
	
	Let $\chi \in \Irr_0(L,c)$ and $\tilde{\chi} \in \Irr(\tilde{L} \mid \chi)$. Let $\chi_0 \in \Irr(\tilde{L}_\chi \mid \chi)$ be the Clifford correspondent of $\tilde{\chi}$. By Definition \ref{blockisomorphism}(ii) we have  $$\mathrm{bl}(\chi_0)=\mathrm{bl}(\sigma_{\tilde{L}_{\chi}}(\chi_0))^{\tilde{L}_\chi}.$$
	By construction of the map $\tilde{\varphi}$, the character $\sigma_{\tilde{L}_{\chi}}(\chi_0) \in  \Irr(M_{\tilde{L}} \mid \varphi(\chi))$ is the Clifford correspondent of $\tilde{\varphi}(\tilde{\chi})$. This shows that assumption (iii) in Theorem \ref{reductionquasi} is satisfied.
\end{proof}

We can now prove our main theorem.

\begin{theorem}\label{maintheorem}
	Let $\G$ be a simple algebraic group of simply connected type with Frobenius $F: \G \to \G$. Suppose that $S:= \G^F / \mathrm{Z}(\G^F)$ is simple and $\G^F$ is its universal covering group. Let $b$ be a block of $\mathcal{O} \G^F e_s^{\G^F}$ for a semisimple element $s \in (\G^\ast)^{F^\ast}$ of $\ell'$-order. If Hypothesis \ref{assumptionquasi}' holds for the group $(\G,F)$ then every $\ell$-block $b$ of $\G^F$ is AM-good for $\ell$.
\end{theorem}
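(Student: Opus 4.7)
The plan is to derive the main theorem as a direct consequence of the machinery built up in this article, primarily by combining Theorem \ref{reductionquasi} with Lemma \ref{Coro24}. First I would restrict to blocks $b$ of non-central defect, since AM-goodness is only defined in that regime; blocks with central defect group satisfy the Alperin--McKay equality trivially and no iAM-bijection needs to be produced.

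So fix $b$ with non-central defect, associated to the semisimple $\ell'$-element $s\in(\G^\ast)^{F^\ast}$, and let $\Levi^\ast$ be the minimal $F^\ast$-stable Levi of $\G^\ast$ containing $\mathrm{C}_{(\G^\ast)^{F^\ast}}(s)\mathrm{C}^\circ_{\G^\ast}(s)$. By \cite[Proposition 4.9]{Jordan} one obtains a dual Levi $\Levi\subseteq\G$ on which the group $\mathcal{A}$ acts stably, and Theorem \ref{BDRintro} furnishes a Morita equivalence between $\mathcal{O}\G^F e_s^{\G^F}$ and $\mathcal{O}\Levi^F e_s^{\Levi^F}$ via the top-degree Deligne--Lusztig bimodule. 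Under this equivalence $b$ corresponds to a block $c$ of $\Levi^F$, which is strictly quasi-isolated by Lemma \ref{commutator}. This places us exactly in the setup of Theorem \ref{reductionquasi}.

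The core step is then to verify assumptions (i)--(iii) of Theorem \ref{reductionquasi} for $c$. This is precisely what Lemma \ref{Coro24} does: under Hypothesis \ref{assumptionquasi}', the lemma simultaneously produces the $\Irr(\tilde{M}_L/M_L)\rtimes\hat{M}_L$-equivariant bijection $\tilde\varphi$ above $\Irr_0(c)$, the $\mathrm{N}_{\tilde{L}\mathcal{A}}(Q,C_Q)$-equivariant height-zero bijection $\varphi$ with the required extension property, and the block-induction compatibility on Clifford correspondents. Lemma \ref{Coro24} in turn rests on Proposition \ref{directproduct}, which decomposes $[\Levi,\Levi]^F$ into simple and solvable factors and invokes Hypothesis \ref{assumptionquasi}' on each quasi-simple factor; on Lemma \ref{normalsubgroup}, which propagates the bijection from $L_0=[\Levi,\Levi]^F$ up to $L$ via the Butterfly Theorem \ref{Butterfly} and Proposition \ref{JEMSproposition}; and on either Lemma \ref{normalsubgroup2} or Lemma \ref{normalsubgroup3} (depending on whether we are in case (a) or (b) of the hypothesis) to pass from the characteristic subgroup $Q_0\leq D_0$ produced by the simple-factor reduction to the relevant characteristic subgroup $Q\leq D$.

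With (i)--(iii) in hand, Theorem \ref{reductionquasi} immediately yields that $b$ is iAM-good; inspection of its proof shows that the resulting bijection $\Psi$ is in fact a strong iAM-bijection, so $b$ is AM-good for $\ell$. Since $s$ was arbitrary and every $\ell$-block lies in some $\mathcal{O}\G^F e_s^{\G^F}$ by the Brou\'e--Michel decomposition, the conclusion follows for all blocks. The main substantive work has already been discharged in Lemma \ref{Coro24} and Theorem \ref{reductionquasi}, so the only real obstacle remaining at this stage is the bookkeeping needed to match the case distinction (a) versus (b) of Hypothesis \ref{assumptionquasi}' to the correct auxiliary lemma and to ensure that the Bonnaf\'e--Rouquier Morita equivalence behaves well with the normal-subgroup structure on both sides of the reduction --- which is exactly what the equivariant and local refinements in \cite[Theorem 2.28]{Jordan} and Theorem \ref{intro2} were designed to guarantee.
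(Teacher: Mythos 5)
Your proposal follows essentially the same route as the paper: identify the Jordan-correspondent block $c$ on the Levi via the Bonnaf\'e--Rouquier Morita equivalence, invoke Lemma \ref{Coro24} (which bundles Proposition \ref{directproduct}, Lemma \ref{normalsubgroup}, and Lemma \ref{normalsubgroup2} or \ref{normalsubgroup3}) to establish hypotheses (i)--(iii) of Theorem \ref{reductionquasi}, and conclude. The paper's own proof is terser but identical in substance, and your preliminary remarks on non-central defect and the Brou\'e--Michel decomposition are correct bookkeeping that the paper leaves implicit.
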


\begin{proof}
	As in Theorem \ref{reductionquasi} let $c \in \mathrm{Z}(\mathcal{O} L e_s^{L})$ be the block idempotent corresponding to $b$ under the Morita equivalence between $\mathcal{O} L e_s^{L}$ and $\mathcal{O} G e_s^{G}$ given by $H^{\mathrm{dim}(\Y_\U^\G)}_c( \Y_\U^\G, \mathcal{O}) e_s^L$. By Lemma \ref{Coro24} there exists a bijection $\tilde{\varphi}: \Irr(\tilde{L} \mid \Irr_0(c) ) \to \Irr(\mathrm{N}_{\tilde{L}}(Q) \mid \Irr_0( C_{Q}))$
	such that $\tilde{\varphi}$ together with the bijection $\varphi: \Irr_0(L , c) \to \Irr_0( \mathrm{N}_L(Q) , C_{Q})$ constructed in the proof of Lemma \ref{normalsubgroup} satisfy assumptions (i)-(iii) of Theorem \ref{reductionquasi}. Hence, by Theorem \ref{reductionquasi} the block $b$ is therefore AM-good for $\ell$.
\end{proof}

\begin{remark}\label{strong versus weak}
	Observe that Hypothesis \ref{assumptionquasi} only requires that certain strictly quasi-isolated blocks have an iAM-bijection. In particular, if the $\ell$-block $b$ is already strictly quasi-isolated then the proof of the previous theorem shows that whenever $b$ has an iAM-bijection is already iAM-good. Indeed, using the proof of Lemma \ref{Coro24} and Theorem \ref{12} one can show that if $b$ is an (arbitrary) $\ell$-block of $\G^F$ which has an iAM-bijection then the block is automatically AM-good.
\end{remark}


\begin{thebibliography}{BDR17}
	
	\bibitem[Asc00]{Aschbacher}
	Michael Aschbacher.
	\newblock {\em Finite group theory.} Volume~10 of {\em Cambridge Studies in
		Advanced Mathematics}.
	\newblock Cambridge University Press, Cambridge, second edition, 2000.
	
	\bibitem[Bon05]{Bonnafe}
	C\'edric Bonnaf\'e.
	\newblock Quasi-isolated elements in reductive groups.
	\newblock {\em Comm. Algebra}, 33(7):2315--2337, 2005.
	
	\bibitem[Bon06]{Bonnafe2}
	C\'{e}dric Bonnaf\'{e}.
	\newblock Sur les caract\`eres des groupes r\'{e}ductifs finis \`a centre non
	connexe: applications aux groupes sp\'{e}ciaux lin\'{e}aires et unitaires.
	\newblock {\em Ast\'{e}risque}, (306), 2006.
	
	\bibitem[BDR17]{Dat}
	C\'edric Bonnaf\'e, Jean-Fran\c{c}ois Dat, and Rapha\"el Rouquier.
	\newblock Derived categories and {D}eligne--{L}usztig varieties {II}.
	\newblock {\em Ann. of Math. (2)}, 185(2):609--670, 2017.

	\bibitem[BR03]{BoRo}
	C{\'e}dric Bonnaf{\'e} and Rapha{\"e}l Rouquier.
	\newblock Cat\'egories d\'eriv\'ees et vari\'et\'es de {D}eligne-{L}usztig.
	\newblock {\em Publ. Math. Inst. Hautes \'Etudes Sci.}, (97):1--59, 2003.

	\bibitem[BR06]{Godement}
	C\'{e}dric Bonnaf\'{e} and Rapha\"{e}l Rouquier.
	\newblock Coxeter orbits and modular representations.
	\newblock {\em Nagoya Math. J.}, 183:1--34, 2006.
	
	\bibitem[Bre]{Breuer}
	Thomas Breuer.
	\newblock Computations for some simple groups.
	\newblock At: http://www.math.rwth-aachen.de/ Thomas.Breuer/ctblocks/doc/overview.html
	
	\bibitem[BM89]{BrMi}
	Michel Brou\'{e} and Jean Michel.
	\newblock Blocs et s\'{e}ries de {L}usztig dans un groupe r\'{e}ductif fini.
	\newblock {\em J. Reine Angew. Math.}, 395:56--67, 1989.
	
	\bibitem[Bro90]{Broue3}
	Michel Brou\'{e}.
	\newblock Isom\'{e}tries parfaites, types de blocs, cat\'{e}gories
	d\'{e}riv\'{e}es.
	\newblock {\em Ast\'{e}risque}, (181-182):61--92, 1990.
	
	\bibitem[CE99]{Marc}
	Marc Cabanes and Michel Enguehard.
	\newblock On blocks of finite reductive groups and twisted induction.
	\newblock {\em Adv. Math.}, 145(2):189--229, 1999.
	
	
	
	\bibitem[CS15]{CS14}
	Marc Cabanes and Britta Sp\"{a}th.
	\newblock On the inductive {A}lperin-{M}c{K}ay condition for simple groups of
	type {$A$}.
	\newblock {\em J. Algebra}, 442:104--123, 2015.
	
	\bibitem[CS17]{Equivariantcharacter}
	Marc Cabanes and Britta Sp\"{a}th.
	\newblock Equivariant character correspondences and inductive {M}c{K}ay
	condition for type {$A$}.
	\newblock {\em J. Reine Angew. Math.}, 728:153--194, 2017.
	
	\bibitem[CS19]{Sternbedingung}
	Marc Cabanes and Britta Sp\"{a}th.
	\newblock Descent equalities and the inductive {M}c{K}ay condition for types
	{$B$} and {$E$}.
	\newblock {\em Adv. Math.}, 356:106820, 48, 2019.
	
	\bibitem[DM91]{DM}
	Fran\c{c}ois Digne and Jean Michel.
	\newblock {\em Representations of finite groups of {L}ie type}, volume~21 of
	{\em London Mathematical Society Student Texts}.
	\newblock Cambridge University Press, Cambridge, 1991.
	
	\bibitem[DM94]{DM2}
	Fran\c{c}ois Digne and Jean Michel.
	\newblock Groupes r\'{e}ductifs non connexes.
	\newblock {\em Ann. Sci. \'{E}cole Norm. Sup. (4)}, 27(3):345--406, 1994.
	
	
	\bibitem[GLS98]{GLS}
	Daniel Gorenstein, Richard Lyons, and Ronald Solomon.
	\newblock {\em The classification of the finite simple groups. {N}umber 3.
		{P}art {I}. {C}hapter {A}}, volume~40 of {\em Mathematical Surveys and
		Monographs}.
	\newblock American Mathematical Society, Providence, RI, 1998.
	\newblock Almost simple $K$-groups.
	
	
	\bibitem[Isa06]{Isaacs}
	I.~Martin Isaacs.
	\newblock {\em Character theory of finite groups}.
	\newblock AMS Chelsea Publishing, Providence, RI, 2006.
	
	\bibitem[KM13]{KessarMalle}
	Radha Kessar and Gunter Malle.
	\newblock Quasi-isolated blocks and {B}rauer's height zero conjecture.
	\newblock {\em Ann. of Math. (2)}, 178(1):321--384, 2013.
	
	\bibitem[KM15]{Cabanesgroup}
	Radha Kessar and Gunter Malle.
	\newblock Lusztig induction and {$\ell$}-blocks of finite reductive groups.
	\newblock {\em Pacific J. Math.}, 279(1-2):269--298, 2015.
	
	\bibitem[KS15]{KS}
	Shigeo Koshitani and Britta Sp\"{a}th.
	\newblock Clifford theory of characters in induced blocks.
	\newblock {\em Proc. Amer. Math. Soc.}, 143(9):3687--3702, 2015.
	
	\bibitem[MT11]{MT}
	Gunter Malle and Donna Testerman.
	\newblock {\em Linear algebraic groups and finite groups of {L}ie type}. Volume
	133 of {\em Cambridge Studies in Advanced Mathematics}.
	\newblock Cambridge University Press, Cambridge, 2011.
	
	\bibitem[Mur13]{Murai}
	Masafumi Murai.
	\newblock On blocks of normal subgroups of finite groups.
	\newblock {\em Osaka J. Math.}, 50(4):1007--1020, 2013.
	
	
	\bibitem[Nav98]{NavarroBook}
	Gabriel Navarro.
	\newblock {\em Characters and blocks of finite groups}. Volume 250 of {\em
		London Mathematical Society Lecture Note Series}.
	\newblock Cambridge University Press, Cambridge, 1998.
	
	\bibitem[NS14]{JEMS}
	Gabriel Navarro and Britta Sp\"{a}th.
	\newblock On {B}rauer's height zero conjecture.
	\newblock {\em J. Eur. Math. Soc. (JEMS)}, 16(4):695--747, 2014.
	
	\bibitem[{Ruh}20]{Jordan}
	Lucas {Ruhstorfer}.
	\newblock {Derived equivalences and equivariant Jordan decomposition}.
	\newblock {\em arXiv e-prints}, page arXiv:2010.04468, October 2020.
	
	\bibitem[{Ruh}21]{Jordan3}
	Lucas {Ruhstorfer}.
	\newblock {Quasi-isolated blocks and the Alperin-McKay conjecture}.
	\newblock {\em arXiv e-prints}, page arXiv:2103.06394, March 2021.
	
	
	\bibitem[{Sp\"{a}}]{TypeD}
		Britta {Sp\"{a}th}.
		\newblock {Extensions of characters and the inductive McKay condition in type
			D}.
		\newblock {\em In preparation}.
		
		\bibitem[Sp\"{a}12]{S12}
		Britta Sp\"{a}th.
		\newblock Inductive {M}c{K}ay condition in defining characteristic.
		\newblock {\em Bull. Lond. Math. Soc.}, 44(3):426--438, 2012.
		
		\bibitem[Sp\"{a}13]{IAM}
		Britta Sp\"{a}th.
		\newblock A reduction theorem for the {A}lperin-{M}c{K}ay conjecture.
		\newblock {\em J. Reine Angew. Math.}, 680:153--189, 2013.
		
		\bibitem[Sp\"{a}18]{LocalRep}
		Britta Sp\"{a}th.
		\newblock Reduction theorems for some global-local conjectures.
		\newblock In {\em Local representation theory and simple groups}, EMS Ser.
		Lect. Math., pages 23--61. Eur. Math. Soc., Z\"{u}rich, 2018.
		
	\end{thebibliography}
\end{document}